\titleformat{\subsection}{\it}{\thesubsection.\enspace}{1pt}{}
\newtheorem{theo}{Theorem}[section]
\newtheorem{lemm}[theo]{Lemma}
\newtheorem{prop}[theo]{Proposition}
\newtheorem{rema}[theo]{Remark}
\numberwithin{equation}{section}
\begin{document}
\title{Optimal decay rate for the generalized Oldroyd-B model with
	only stress tensor diffusion in $\mathbb{R}^2$
\hspace{-4mm}
}

\author{ Zhaonan $\mbox{Luo}^1$ \footnote{E-mail: luozhn@fudan.edu.cn},\quad
Wei $\mbox{Luo}^2$\footnote{E-mail:  luowei23@mail2.sysu.edu.cn} \quad and\quad
Zhaoyang $\mbox{Yin}^{2,3}$\footnote{E-mail: mcsyzy@mail.sysu.edu.cn}\\
$^1\mbox{School}$ of Mathematical Sciences, Fudan University, Shanghai 200433, China.\\
$^2\mbox{Department}$ of Mathematics,
Sun Yat-sen University, Guangzhou 510275, China\\
$^3\mbox{Faculty}$ of Information Technology,\\ Macau University of Science and Technology, Macau, China}

\date{}
\maketitle
\hrule

\begin{abstract}
In this paper, we are concerned with optimal decay rate for the 2-D generalized Oldroyd-B model with
only stress tensor diffusion $(-\Delta)^{\beta}\tau$. In the case $\beta=1$, we first establish optimal decay rate in $H^1$ framework and remove the smallness assumption of low frequencies by virtue of the Fourier splitting method and the Littlewood-Paley decomposition theory. Furthermore, we prove optimal decay rate for the highest derivative of the solution by a different method combining time frequency decomposition and the time weighted energy estimate. In the case $\frac 1 2\leq \beta <1$, we study optimal decay rate for the highest derivative of the solution by the improved Fourier splitting method. \\
\vspace*{5pt}
\noindent {\it 2020 Mathematics Subject Classification}: 35Q31, 76A05, 74B20, 42A38.

\vspace*{5pt}
\noindent{\it Keywords}: The generalized Oldroyd-B model; Optimal decay rate; The Fourier splitting method.
\end{abstract}

\vspace*{10pt}

\tableofcontents

\section{Introduction}
The classical Oldroyd-B model can be written as follows:
\begin{align}\label{eq0}
\left\{\begin{array}{l}
\partial_tu + u\cdot\nabla u+\nabla P = div~\tau+\nu\Delta u,~~~~div~u=0,\\[1ex]
\partial_t\tau + u\cdot\nabla\tau+a_0\tau+Q(\nabla u,\tau)=\alpha D(u),\\[1ex]
u|_{t=0}=u_0,~~\tau|_{t=0}=\tau_0. \\[1ex]
\end{array}\right.
\end{align}
The Oldroyd-B model derived by J. G. Oldroyd in \cite{1958Non} which was used to describe the dynamics of viscoelastic fluids.
In \eqref{eq0}, $u(t,x)$ represents the velocity of the liquid, $P$ denotes the pressure and $\tau(t,x)$ is the symmetric stress tensor. The parameters $a_0$, $\nu$ and $\alpha$ are nonnegative.
Furthermore, $Q$ is the following bilinear form
$$Q(\nabla u, \tau)=\tau \Omega-\Omega\tau+b(D(u)\tau+\tau D(u)),~~~~b\in[-1, 1],$$
with the deformation tensor $D(u)=\frac {\nabla u+(\nabla u)^T} {2}$ and the vorticity tensor $\Omega=\frac {\nabla u-(\nabla u)^T} {2}$.
One can refer to \cite{2015Elgindi} for more explanations of the Oldroyd-B model.

In this paper, we are concerned with the generalized Oldroyd-B model with
only stress tensor diffusion and without the damping $(a_0=0)$:
\begin{align}\label{eq1}
\left\{\begin{array}{l}
\partial_tu + u\cdot\nabla u+\nabla P = div~\tau,~~~~div~u=0,\\[1ex]
\partial_t\tau + u\cdot\nabla\tau+Q(\nabla u,\tau)+(-\Delta)^{\beta}\tau= D(u),\\[1ex]
u|_{t=0}=u_0,~~\tau|_{t=0}=\tau_0. \\[1ex]
\end{array}\right.
\end{align}
Taking $\tau=0$ in $\eqref{eq1}$, then we obtain $Du=0$, which means that $u=0$ in Sobolev spaces. This observation reveals the essential difference between \eqref{eq1} and the well-known Euler equation.

\subsection{The Oldroyd-B model}
We first recall some mathematic results for the classical Oldroyd-B model \eqref{eq0}. In \cite{Guillope1990}, C. Guillop\'e,  and J. C. Saut first proved that the Oldroyd-B model admits a unique global strong solution in Sobolev spaces. The $L^p$-setting was showed by E. Fern\'andez-Cara, F.Guill\'en and R. Ortega \cite{Fernandez-Cara}. The global week solutions of the Oldroyd-B model were obtained by P. L. Lions and N. Masmoudi \cite{Lions-Masmoudi} for the co-rotation case that is $b=0$. However, the problem for the general case $b\neq0$ is still open, see \cite{2011Global,Masmoudi2013}. J. Y. Chemin and N. Masmoudi \cite{Chemin2001}
showed the existence and uniqueness of strong solutions in homogenous Besov spaces with critical index of regularity. Optimal decay rates for solutions to the 3-D Oldroyd-B model were given by M. Hieber, H. Wen and R. Zi \cite{2019OldroydB}. An approach based on the deformation tensor can be found in \cite{Li1,Li2,Lei-Zhou2005,Lei2008,2010On,Zhang-Fang2012,Cai2019}.

Some mathematic results for the generalized Oldroyd-B model \eqref{eq1} are given as follows. T. M. Elgindi and F. Rousset \cite{2015Elgindi} first showed global regularity for \eqref{eq1} with $a_0>0$ and $\beta=1$. In \cite{2015Elgindi1}, T. M. Elgindi and J. Liu  proved global strong solutions of the 3-D case under the assumption that initial data is sufficiently small. Recently, W. Deng, Z. Luo and Z. Yin \cite{DLY} obtained the global solutions in co-rotation case and proved the $H^1$ decay rate for the global solutions to \eqref{eq1}.
For the case $a_0=0$ and $\beta\in[\frac 1 2,1]$, P. Constantin, J. Wu, J. Zhao and Y. Zhu \cite{P.Constantin} established the global well-posedness for \eqref{eq1} with small data. In \cite{Wu}, J. Wu and J. Zhao investigated the global well-posedness in Besov spaces for fractional dissipation with small data. Optimal time decay rate in $H^1$ framework of global solutions to \eqref{eq1} was given by \cite{Wu1,Liushuai}.
However, they can't deal with critical case $d=2$ and $\beta=1$.

\subsection{Main results}
Optimal decay rate for \eqref{eq1} has been studied widely. The problem of time decay rate with $d=2$ is more difficult than the case with $d\geq 3$. Since the additional stress tensor $\tau$ does not decay fast enough, we failed to use the bootstrap argument as in \cite{Schonbek1985,Luo-Yin2}. For critical case $d=2$, we can not get any algebraic decay rate by the Fourier splitting method directly as in \cite{He2009}. We will study this problem by virtue of the refinement to Schonbek's \cite{Schonbek} strategy. Using the Fourier splitting method, we obtain initial logarithmic decay rate   \begin{align*}
\|(u,\tau)\|_{L^2}\leq C\ln^{-l}(e+t),
\end{align*}
for any $l\in N^{+}$. The main difficulty for proving optimal decay rate is lack of damping term and low-frequency estimation of $L^1$ for $\tau$. However, we have
\begin{align*}
\int_{S(t)}\int_{0}^{t}|\mathcal{F}Q(\nabla u, \tau)\cdot\bar{\hat{\tau}}|ds'd\xi\leq C(1+t)^{-\frac 1 2} \int_{0}^{t}\|\tau\|^2_{L^{2}}\|\nabla u\|_{L^{2}}ds'.
\end{align*}
By virtue of the time weighted energy estimate and the logarithmic decay rate, then we improve the time decay rate to algebraic decay rate $(1+t)^{-\frac{1}{2}}$. Notice that this is not the optimal decay rate.
Then, we prove optimal decay rate in $H^1$ framework for the 2-D generalized Oldroyd-B type model by virtue of the Fourier spiltting method and the Littlewood-Paley decomposition theory.
Considering the decay rate for the highest derivative of the solution to \eqref{eq1}, the main difficulty is unclosed energy estimate. The complete dissipation $\|\nabla\Lambda^s\tau\|^2_{L^2}+\|\nabla\Lambda^{s-1} u\|^2_{L^2}$ can be obtained by estimating the mixed term $\|\Lambda^s(u,\tau)\|^2_{L^2}+\langle \Lambda^{s-1}\tau, -\nabla\Lambda^{s-1} u\rangle$. One can see that the decay rate of inner product is slower than the decay rate of energy.
To overcome this difficulty, we construct a different energy and dissipation functional for $(u,\tau)$ as follows:
$$\widetilde{E}_s=k(1+t)\|\Lambda^s(u,\tau)\|^2_{L^2}+\langle \Lambda^{s-1}\tau, -\nabla\Lambda^{s-1} u\rangle,$$
and
$$\widetilde{D}_s=k(1+t)\|\nabla\Lambda^s\tau\|^2_{L^2}+ \frac 1 4\|\nabla\Lambda^{s-1} u\|^2_{L^2},$$
where $k$ is small enough. Finally, we introduce a new method which flexibly combines the Fourier splitting method and the time weighted energy estimate to prove the decay rate for the highest derivative.

We also present optimal decay rate for the generalized Oldroyd-B model \eqref{eq1} in the fractional case $\frac 1 2\leq\beta<1$.
By virtue of the traditional Fourier splitting method, one can not obtain the optimal decay for the fractional case. P. Wang, J. Wu, X. Xu and Y. Zhong \cite{Wu1}
use spectral analysis method to prove the optimal decay rate of the lower order energy of the fractional case. The corresponding linearized system of \eqref{eq1} is given by
\begin{align}\label{P1}
\left\{\begin{array}{l}
\partial_tu=\mathbb{P}div~\tau,~~~~div~u=0,\\[1ex]
\partial_t\mathbb{P}div~\tau+(-\Delta)^{\beta}\mathbb{P}div~\tau= \Delta u. \\[1ex]
\end{array}\right.
\end{align}
where $\mathbb{P}$ denotes the Leray projection onto divergence free vector fields. One can see that $(u,\mathbb{P}div~\tau)$ satisfy the same system of wave-type equation
\begin{align}\label{P2}
\partial_{tt}W+(-\Delta)^{\beta}\partial_{t}W-\Delta W=0.
\end{align}
The structure in \eqref{P2} reveals that there are both dissipative and dispersive effects
on $(u,\mathbb{P}div~\tau)$. However,
we point out that the refinement of Schonbek's strategy also work in this case with extra lower energy dissipation estimate
\begin{align}\label{P3}
\frac{d}{dt}[\|(u,\tau)\|^2_{H^s}+2k(\langle-\nabla u,\tau\rangle_{H^{s-\beta}}+\langle \tau, -\Lambda^{-2+2\beta}\nabla u\rangle)] + \frac k 2\|\Lambda^{\beta} u\|^2_{H^{s+1-2\beta}} + \|\Lambda^{\beta}\tau\|^2_{H^s} \leq 0,
\end{align}
where we use the properties of Calderon-Zygmund type operator. Then, we rediscover optimal decay rates in $H^1$ \cite{Wu1} by the improved Fourier splitting method. The main difficulty to obtain optimal decay rate for the highest derivative of the solution is unclosed energy estimate.
The complete dissipation $\|\Lambda^{s+\beta}\tau\|^2_{L^2}+\|\nabla\Lambda^{s-\beta} u\|^2_{L^2}$ can be obtained by estimating the mixed term $\|\Lambda^s(u,\tau)\|^2_{L^2}+\langle \Lambda^{s-\beta}\tau, -\nabla\Lambda^{s-\beta} u\rangle$. One can see that the decay rate of inner product is slower than the decay rate of energy.
To overcome this difficulty, we construct a different energy and dissipation functional for $(u,\tau)$ as follows:
$$\overline{E}_{\beta}=(1+t)^a\|\Lambda^s(u,\tau)\|^2_{L^2}+k\langle \Lambda^{s-\beta}\tau, -\nabla\Lambda^{s-\beta} u\rangle,$$
and
$$\overline{D}_{\beta}=(1+t)^a\|\Lambda^{s+\beta}\tau\|^2_{L^2}+ \frac k 4\|\nabla\Lambda^{s-\beta} u\|^2_{L^2},$$
where $a=2-\frac 1 {\beta}\in[0,1)$ and $k$ is small enough.
We prove optimal decay rate for the highest derivative of the solution to 2-D generalized Oldroyd-B type model by the improved Fourier splitting method. Notice that this novel result for \eqref{eq1} has not been studied before.

Let's review the global existence result for \eqref{eq1}.
\begin{theo}\cite{P.Constantin}\label{th1}
	Let $d=2$, $\frac 1 2\leq \beta \leq 1$ and $s>2$. Let $(u,\tau)$ be a strong solution of \eqref{eq1} with the initial data $(u_0,\tau_0)\in H^s$ and $\tau_0$ is symmetric. There exists a small constant $\delta$ such that if
	\begin{align*}
	\|(u_0,\tau_0)\|_{H^s}\leq \delta,
	\end{align*}
	then the system \eqref{eq1} admits a unique global strong solution $(u,\tau)\in C([0,\infty); H^s)$. Moreover, the energy estimate for $(u,\tau)$ implies that
	\begin{align}\label{estimate}
	\frac{d}{dt}(\|(u,\tau)\|^2_{H^s}+2k\langle-\nabla u,\tau\rangle_{H^{s-\beta}})  + \frac k 2\|\nabla u\|^2_{H^{s-\beta}} + \|\Lambda^{\beta}\tau\|^2_{H^s} \leq 0,
	\end{align}
	where $k$ is a sufficiently small constant.
\end{theo}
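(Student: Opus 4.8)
The plan is the classical scheme for small-data global well-posedness of a quasilinear system: local existence, uniform-in-time energy bounds, and a continuation argument, the content being the energy inequality \eqref{estimate}; once it is available, global existence is immediate since the $H^s$ norm then stays small on the maximal interval. For $s>2$ one has $H^s(\mathbb R^2)\hookrightarrow W^{1,\infty}$, so a standard approximation scheme (Friedrichs mollification, or Picard iteration on the regularized problem) yields a unique local solution $(u,\tau)\in C([0,T_\ast);H^s)$ together with the continuation criterion that the solution extends past $T_\ast$ as long as $\sup_{[0,t]}\|(u,\tau)\|_{H^s}<\infty$; the fractional dissipation $(-\Delta)^{\beta}\tau$ with $\beta\ge\tfrac12$ only helps at this stage. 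Thus it suffices to propagate smallness of $\|(u,\tau)(t)\|_{H^s}$.

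For the basic $H^s$ estimate I would apply $\Lambda^{\sigma}$, $0\le\sigma\le s$, pair the velocity equation with $u$ and the stress equation with $\tau$, and integrate over $\mathbb R^2$. The pressure drops and the transport terms $u\cdot\nabla u$, $u\cdot\nabla\tau$ leave only commutators since $\operatorname{div}u=0$; crucially, $\tau$ symmetric gives $\langle\operatorname{div}\tau,u\rangle=-\langle\tau,\nabla u\rangle=-\langle\tau,D(u)\rangle$, which exactly cancels the $\langle D(u),\tau\rangle$ produced by the right side of the stress equation. This yields
\[
\frac{d}{dt}\|(u,\tau)\|_{H^s}^2+2\|\Lambda^{\beta}\tau\|_{H^s}^2=-2\langle Q(\nabla u,\tau),\tau\rangle_{H^s}+(\text{commutator terms}),
\]
with no dissipation yet on $u$. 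By Kato--Ponce and Moser type inequalities the commutator terms are $\lm\|\nabla u\|_{L^\infty}\|(u,\tau)\|_{H^s}^2\lm\delta\|(u,\tau)\|_{H^s}^2$. The delicate piece is the top-order contribution $\langle\Lambda^s(\tau\nabla u),\Lambda^s\tau\rangle$: it cannot be closed by placing all $s$ derivatives on $u$ (there is no $H^{s+1}$ bound for $u$), so I dualize it against the stress dissipation, $\langle\Lambda^{s-\beta}(\tau\nabla u),\Lambda^{s+\beta}\tau\rangle\lm\|\tau\nabla u\|_{\dot H^{s-\beta}}\|\Lambda^{\beta}\tau\|_{\dot H^s}\lm(\|\tau\|_{L^\infty}\|\nabla u\|_{H^{s-\beta}}+\|\nabla u\|_{L^\infty}\|\tau\|_{H^s})\|\Lambda^{\beta}\tau\|_{H^s}$, using $s-\beta>1$ to keep the fractional Leibniz rule clean; smallness then turns this into $\lm\delta$ times the full dissipation. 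The quantity $\|\nabla u\|_{H^{s-\beta}}$ occurring here is precisely the dissipation built in the next step.

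To produce dissipation for $u$ I differentiate the mixed quantity $\langle-\nabla u,\tau\rangle_{H^{s-\beta}}$ in time, inserting $\partial_t\tau=D(u)-(-\Delta)^{\beta}\tau-u\cdot\nabla\tau-Q(\nabla u,\tau)$ and $\partial_tu=\mathbb{P}\operatorname{div}\tau-\mathbb{P}(u\cdot\nabla u)$. The coercive gain is $\langle-\nabla u,D(u)\rangle_{H^{s-\beta}}=-\|D(u)\|_{H^{s-\beta}}^2=-\tfrac12\|\nabla u\|_{H^{s-\beta}}^2$; the term $\langle-\nabla\mathbb{P}\operatorname{div}\tau,\tau\rangle_{H^{s-\beta}}=\|\mathbb{P}\operatorname{div}\tau\|_{H^{s-\beta}}^2$ and, from $\langle\nabla u,(-\Delta)^{\beta}\tau\rangle_{H^{s-\beta}}$ split by Cauchy--Schwarz as $\eta\|\nabla u\|_{H^{s-\beta}}^2+C_\eta\|(-\Delta)^{\beta}\tau\|_{H^{s-\beta}}^2$, are absorbed into $\|\Lambda^{\beta}\tau\|_{H^s}^2$ because a frequency-by-frequency comparison gives $\|\mathbb{P}\operatorname{div}\tau\|_{H^{s-\beta}}^2+\|(-\Delta)^{\beta}\tau\|_{H^{s-\beta}}^2\lm\|\Lambda^{\beta}\tau\|_{H^s}^2$ exactly when $\tfrac12\le\beta\le1$ (high frequencies force $\beta\ge\tfrac12$, low frequencies force $\beta\le1$); the remaining nonlinear contributions are again $\lm\delta$ times dissipation. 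Adding $2k$ times this identity to the basic estimate and choosing $\eta$, then $k$, then $\delta/k$ small in turn absorbs all bad terms and produces exactly \eqref{estimate}; the same range $\tfrac12\le\beta\le1$ also gives $|\langle-\nabla u,\tau\rangle_{H^{s-\beta}}|\le\|u\|_{H^s}\|\tau\|_{H^s}$, so for small $k$ the functional $\|(u,\tau)\|_{H^s}^2+2k\langle-\nabla u,\tau\rangle_{H^{s-\beta}}$ is equivalent to $\|(u,\tau)\|_{H^s}^2$.

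Finally, \eqref{estimate} makes this equivalent functional non-increasing, hence $\|(u,\tau)(t)\|_{H^s}^2\le C\|(u_0,\tau_0)\|_{H^s}^2\le C\delta^2$ on $[0,T_\ast)$; choosing $\delta$ so small that $C\delta^2$ lies below the continuation threshold, the solution never leaves the small ball and extends to $[0,\infty)$, and uniqueness follows from the same energy computation applied to the difference of two solutions. I expect the main obstacle to be precisely the a priori estimate at top order --- closing the cubic term $Q(\nabla u,\tau)$, which forces one to run the $H^s$ energy identity and the $H^{s-\beta}$ cross-term identity simultaneously and to exploit only the fractional $\Lambda^{\beta}$-smoothing of $\tau$ through duality and paraproduct bounds; once this coupled estimate is in place, everything else is routine.
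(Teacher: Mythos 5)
The paper does not prove Theorem \ref{th1} at all --- it is quoted from \cite{P.Constantin} --- and your reconstruction (basic $H^s$ estimate with the $\langle \operatorname{div}\tau,u\rangle$--$\langle D(u),\tau\rangle$ cancellation, the cross term $\langle-\nabla u,\tau\rangle_{H^{s-\beta}}$ to generate $\|\nabla u\|^2_{H^{s-\beta}}$, duality of the top-order $Q$-term against $\|\Lambda^{\beta}\tau\|_{H^s}$, and the frequency comparison that pins down $\tfrac12\le\beta\le1$) is exactly the energy-method route behind \eqref{estimate} and the paper's own analogous computations \eqref{estimate1}--\eqref{estimate2}, so it is essentially correct and essentially the same approach. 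The one shortcut to tighten is the line ``commutator terms $\lesssim\delta\|(u,\tau)\|_{H^s}^2$'': since the dissipation in \eqref{estimate} gives no $L^2$ (low-frequency) control of $u$ or $\tau$, such a bound as literally written could not be absorbed to yield $\le 0$; one must keep these terms as $\delta$ times the homogeneous top-order/dissipative quantities (e.g.\ $\delta\|\Lambda^s u\|_{L^2}^2\lesssim\delta\|\nabla u\|_{H^{s-\beta}}^2$, $\delta\|\Lambda^\beta\tau\|_{H^s}^2$), which the commutator and product structure indeed provides.
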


Our main results can be stated as follows:
\begin{theo}\label{th2}
Let $\beta=1$. Let $(u,\tau)$ be a strong solution of \eqref{eq0} with the initial data $(u_0,\tau_0)$ under the condition in Theorem \ref{th1}. Suppose that $(u_0,\tau_0)\in \dot{B}^{-1}_{2,\infty}$, then there exists $C>0$ such that for every $t>0$ and $s_1\in[0,s]$, there holds
\begin{align}\label{high1}
\|\Lambda^{s_1}(u,\tau)\|_{L^2} \leq C(1+t)^{-\frac{1+{s_1}}{2}}.
\end{align}
In addition, if $(u_0,\tau_0)\in H^{s+1}$ and $0<c_0=|\int_{\mathbb{R}^2}(u_0,\tau_0)dx|$, then there exists $C_0(s_1,c_0)\leq C$ such that
\begin{align}\label{low1}
\|\Lambda^{s_1}(u,\tau)\|_{L^2}\geq \frac {C_0}{2}(1+t)^{-\frac {s_1+1}{2}}.
\end{align}	
\end{theo}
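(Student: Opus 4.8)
The plan is to reduce everything to the two endpoints $s_1=0$ and $s_1=s$, obtain these by two rather different arguments, interpolate for the intermediate orders, and finally establish the matching lower bound by a low-frequency spectral analysis.

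\emph{Step 1: the low-frequency bound and the $L^2$ rate.} First I would propagate $\|(u,\tau)(t)\|_{\dot B^{-1}_{2,\infty}}\le C$ for all $t\ge0$: applying $\Delta_j$ to the Duhamel form of \eqref{eq1} and using Bernstein's inequality, the parabolic smoothing $\|e^{t\Delta}\Delta_j\tau\|_{L^2}\lesssim e^{-c2^{2j}t}\|\Delta_j\tau\|_{L^2}$, the $e^{-ct}$ high-frequency decay of the wave-type flow \eqref{P2} for $(u,\mathbb P\,\mathrm{div}\,\tau)$, and paraproduct bounds for the quadratic terms, all closed by the smallness $\|(u_0,\tau_0)\|_{H^s}\le\delta$ from Theorem~\ref{th1}; this yields $\int_{|\xi|\le R}|\xi|^{2\sigma}|\widehat{(u,\tau)}|^2\,d\xi\lesssim R^{2\sigma+2}$ for $\sigma\ge0$. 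Next, the energy identity behind \eqref{estimate} at the $L^2$ level reads $\frac{d}{dt}\mathcal E_0+c(\|\nabla u\|_{L^2}^2+\|\nabla\tau\|_{L^2}^2)\le|\langle Q(\nabla u,\tau),\tau\rangle|$ with $\mathcal E_0\simeq\|(u,\tau)\|_{L^2}^2$ (the coupling $\langle-\nabla u,\tau\rangle$ supplies the velocity dissipation, using $\langle\nabla u,D(u)\rangle=\tfrac12\|\nabla u\|_{L^2}^2$), and the bilinear term is $\lesssim\|\nabla u\|_{L^2}\|\tau\|_{L^2}\|\nabla\tau\|_{L^2}$, absorbed into the dissipation since $\|\tau\|_{L^2}\le\delta$. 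Running the Fourier splitting method on the ball $S(t)=\{|\xi|^2\le C_1/(1+t)\}$, the remainder $\int_{S(t)}|\widehat{(u,\tau)}|^2\,d\xi\lesssim(1+t)^{-1}$ by the Besov bound, so Grönwall with integrating factor $(1+t)^{cC_1}$, $cC_1>1$, gives $\|(u,\tau)(t)\|_{L^2}\lesssim(1+t)^{-1/2}$; absent the Besov input one first extracts the logarithmic rate $\ln^{-l}(e+t)$ for every $l$ and then upgrades to $(1+t)^{-1/2}$ via the estimate $\int_{S(t)}\int_0^t|\mathcal F Q(\nabla u,\tau)\cdot\bar{\hat\tau}|\,ds\,d\xi\lesssim(1+t)^{-1/2}\int_0^t\|\tau\|_{L^2}^2\|\nabla u\|_{L^2}\,ds$ combined with a time-weighted energy estimate.

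\emph{Step 2: the top order $s_1=s$.} The energy identity at order $s$ closes only through the coupling $\langle\Lambda^{s-1}\tau,-\nabla\Lambda^{s-1}u\rangle$, which decays slower than $\|\Lambda^s(u,\tau)\|_{L^2}^2$, so following the introduction I would propagate $\widetilde E_s=k(1+t)\|\Lambda^s(u,\tau)\|_{L^2}^2+\langle\Lambda^{s-1}\tau,-\nabla\Lambda^{s-1}u\rangle$. A direct computation gives $\frac{d}{dt}\widetilde E_s+\widetilde D_s\lesssim k\|\Lambda^s(u,\tau)\|_{L^2}^2+\|\Lambda^{s-1}\mathrm{div}\,\tau\|_{L^2}^2+(\text{quadratic})$ with $\widetilde D_s=k(1+t)\|\nabla\Lambda^s\tau\|_{L^2}^2+\tfrac14\|\nabla\Lambda^{s-1}u\|_{L^2}^2$, the extra $\|\Lambda^{s+1}\tau\|_{L^2}^2$ from differentiating the coupling being absorbed into the $\tau$-dissipation once $t$ is large. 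The velocity part of the $k\|\Lambda^s(u,\tau)\|_{L^2}^2$ remainder is absorbed by $\tfrac14\|\nabla\Lambda^{s-1}u\|_{L^2}^2$ for $k$ small; the $\tau$-parts, which are $\lesssim\|\Lambda^s\tau\|_{L^2}^2$, are absorbed by applying Fourier splitting to $k(1+t)\|\nabla\Lambda^s\tau\|_{L^2}^2$ with $C_1$ large, at the cost of a low-frequency term $\lesssim(1+t)^{-s}\|\tau\|_{L^2}^2\lesssim(1+t)^{-s-1}$ from Step~1; the quadratic terms are handled using the $H^{s+1}$-boundedness propagated from $(u_0,\tau_0)\in H^{s+1}$ together with the lower-order decay already proved. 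This leaves $\frac{d}{dt}\widetilde E_s+\frac{c}{1+t}\widetilde E_s\lesssim(1+t)^{-s-1}$, and Grönwall with a sufficiently high power $(1+t)^\gamma$ ($\gamma>s$, available since $k$ is small) yields $\widetilde E_s\lesssim(1+t)^{-s}$, i.e.\ $\|\Lambda^s(u,\tau)\|_{L^2}\lesssim(1+t)^{-(1+s)/2}$.

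\emph{Step 3: interpolation and the lower bound.} For $0<s_1<s$, \eqref{high1} follows by interpolating Steps 1 and 2: $\|\Lambda^{s_1}(u,\tau)\|_{L^2}\le\|(u,\tau)\|_{L^2}^{1-s_1/s}\|\Lambda^s(u,\tau)\|_{L^2}^{s_1/s}\lesssim(1+t)^{-(1+s_1)/2}$. For \eqref{low1} I first treat $s_1=0$: writing $(u,\tau)=e^{t\mathcal L}(u_0,\tau_0)+\int_0^t e^{(t-s)\mathcal L}\mathcal N(u,\tau)(s)\,ds$ with $\mathcal L$ the generator of \eqref{P1}, whose eigenvalues near $\xi=0$ are $-\tfrac12|\xi|^2\pm i|\xi|+O(|\xi|^3)$, one gets $|\widehat{e^{t\mathcal L}(u_0,\tau_0)}(\xi)|\gtrsim e^{-|\xi|^2t}|\widehat{(u_0,\tau_0)}(\xi)|$; since $|\widehat{(u_0,\tau_0)}(0)|=c_0>0$, continuity gives $|\widehat{(u_0,\tau_0)}(\xi)|\ge c_0/2$ on a fixed ball, and integrating over $|\xi|\lesssim(1+t)^{-1/2}$ (where the oscillations do not conspire to vanish over the whole shell) yields $\|e^{t\mathcal L}(u_0,\tau_0)\|_{L^2}\gtrsim c_0(1+t)^{-1/2}$. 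The Duhamel term is of strictly lower order: the convective terms and the divergence part of $Q$ gain a derivative on the semigroup and contribute $\lesssim(1+t)^{-1}\ln(1+t)$, while the remaining part of $Q(\nabla u,\tau)$ is controlled through $\|(\nabla\tau)u\|_{L^1}\lesssim\|\nabla\tau\|_{L^2}\|u\|_{L^2}\lesssim(1+s)^{-3/2}$ and, at high frequencies, the $H^{s+1}$ regularity. Hence for $t$ large $\|(u,\tau)(t)\|_{L^2}\ge\tfrac{C_0}{2}(1+t)^{-1/2}$, and for general $s_1$ the Littlewood--Paley inequality $\|f\|_{L^2}\lesssim\|f\|_{\dot B^{-1}_{2,\infty}}^{s_1/(1+s_1)}\|\Lambda^{s_1}f\|_{L^2}^{1/(1+s_1)}$ together with the Besov bound of Step~1 converts this into $\|\Lambda^{s_1}(u,\tau)\|_{L^2}\gtrsim\|(u,\tau)\|_{L^2}^{1+s_1}\gtrsim c_0^{1+s_1}(1+t)^{-(1+s_1)/2}$, which is \eqref{low1}.

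The main obstacle is the lack of a damping term and of any $L^1$-type low-frequency control on $\tau$: this is what forces the logarithmic-then-time-weighted detour for the $L^2$ rate and, at the top order, the unconventional weighted functionals $\widetilde E_s,\widetilde D_s$, whose compatibility with the Fourier splitting — the interplay between the large splitting constant $C_1$ and the small coupling constant $k$ — is the delicate technical point; a secondary difficulty is verifying that the Duhamel term in the lower bound is genuinely subdominant in spite of the slow low-frequency decay.
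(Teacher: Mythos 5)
Your upper-bound strategy (logarithmic decay via Fourier splitting, a time-weighted upgrade, propagation of a negative Besov norm, then the weighted functionals $\widetilde E_s,\widetilde D_s$ combined with Fourier splitting at top order, and interpolation for intermediate $s_1$) is essentially the paper's route, but two points in your sketch do not hold as stated. First, in Step 2 you close the top-order quadratic terms using ``$H^{s+1}$-boundedness propagated from $(u_0,\tau_0)\in H^{s+1}$'': the bound \eqref{high1} is claimed for data only in $H^s\cap\dot B^{-1}_{2,\infty}$, and even under the extra hypothesis $H^{s+1}$-boundedness is not available (smallness is assumed only in $H^s$). The paper avoids this entirely by the interpolation $\|\Lambda^s\tau\|_{L^2}^2\le C\|\Lambda^{s-1}\tau\|_{L^2}\|\Lambda^{s+1}\tau\|_{L^2}$ together with Young's inequality against the weight, $\|\Lambda^{s-1}\tau\|_{L^2}\|\Lambda^{s+1}\tau\|_{L^2}\le \tfrac k4(1+t)\|\Lambda^{s+1}\tau\|_{L^2}^2+C(1+t)^{-s}\|\tau\|_{L^2}^2$, so the dangerous term is absorbed into $\widetilde D_s$ and only the already-known low-order decay is used. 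Second, your direct Duhamel/paraproduct propagation of $\dot B^{-1}_{2,\infty}$ is circular without decay input; the paper gets it by a two-step bootstrap (Lemmas of type \eqref{de0}--\eqref{de2}: rate $\tfrac12$ gives $\sigma=\tfrac12$, which gives rate $\tfrac58$, which gives $\sigma=1$, which gives rate $1$), and your hedge via the logarithmic rate only covers the first rung.

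The genuine gap is the lower bound. You assert that the Duhamel correction is ``of strictly lower order,'' but for the non-divergence-form part of $Q(\nabla u,\tau)$ (and $D(u)\tau$ etc.) the best available input is $\|\nabla u\,\tau\|_{L^1}\lesssim(1+s)^{-3/2}$, and convolving with the two-dimensional $L^1\to L^2$ semigroup rate $(t-s)^{-1/2}$ gives $\int_0^t(1+t-s)^{-1/2}(1+s)^{-3/2}\,ds\simeq(1+t)^{-1/2}$ --- exactly the same order as the linear part, not smaller. So your conclusion $\|(u,\tau)\|_{L^2}\ge\tfrac{C_0}{2}(1+t)^{-1/2}$ does not follow; nothing in your argument makes the constant of the Duhamel term small relative to $c_0$. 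The paper's proof is built around precisely this obstruction: it sets $(u_N,\tau_N)=(u-u_L,\tau-\tau_L)$, proves that the difference decays at the \emph{same} rate $(1+t)^{-(1+s_1)/2}$ but with constant $\le\tfrac{C_0}{2}$, obtained from the smallness $\delta$ through a careful energy/Fourier-splitting scheme at both low and top order in which the interplay of $\delta$, $k_0$, $C_1$, $C_2$ is tracked (this is also where the hypothesis $(u_0,\tau_0)\in H^{s+1}$ is genuinely used, to control $\|\nabla\Lambda^s u_L\|_{L^2}$), and only then concludes by the triangle inequality. To repair your Step 3 you would have to replace ``strictly lower order'' by such a same-rate, small-constant estimate; your interpolation trick $\|\Lambda^{s_1}f\|_{L^2}\gtrsim\|f\|_{L^2}^{1+s_1}\|f\|_{\dot B^{-1}_{2,\infty}}^{-s_1}$ for passing from $s_1=0$ to general $s_1$ is a nice shortcut compared with the paper (which redoes the argument at order $s_1$), and would survive once the $L^2$ lower bound is actually established.
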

\begin{theo}\label{th3}
	Let $\beta\in[\frac 1 2, 1)$. Let $(u,\tau)$ be a strong solution of \eqref{eq0} with the initial data $(u_0,\tau_0)$ under the condition in Theorem \ref{th1}. If $(u_0,\tau_0)\in \dot{B}^{-1}_{2,\infty}$, then there exists $C>0$ such that for every $t>0$,
	\begin{align}\label{high2}
	\|\Lambda^{s}(u,\tau)\|_{L^2} \leq C(1+t)^{-\frac{{s}+1}{2\beta}}.
	\end{align}
	Furthermore, suppose that $(u_0,\tau_0)\in H^{s+\beta}$ and $0<|\int_{\mathbb{R}^2}(u_0,\tau_0)dx|$, then there exists $C_\beta\leq C$ such that
	\begin{align}\label{low2}
	\|\Lambda^{s}(u,\tau)\|_{L^2}\geq \frac {C_\beta}{2}(1+t)^{-\frac {s+1}{2\beta}}.
	\end{align}	
\end{theo}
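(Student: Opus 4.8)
\medskip
\noindent\emph{Outline of the proof of Theorem \ref{th3}.}\quad
The plan is to prove the upper bound \eqref{high2} in two stages --- first an ``$H^1$-level'' decay, then the top-derivative decay --- and to obtain the lower bound \eqref{low2} from a linear/nonlinear splitting. In the first stage we upgrade the energy law of Theorem \ref{th1}: pairing the $\tau$-equation with $\Lambda^{-2+2\beta}$ applied to the $u$-equation and adding a small multiple to \eqref{estimate} produces \eqref{P3} --- here $\mathbb{P}$ and $\Lambda^{-2+2\beta}\nabla$ are treated as bounded Calder\'on--Zygmund operators, which is where $\beta\ge\frac{1}{2}$ is first used --- the gain being the genuine low-frequency dissipation $\|\Lambda^{\beta}u\|_{H^{s+1-2\beta}}^2+\|\Lambda^{\beta}\tau\|_{H^s}^2$. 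We then propagate the $\dot B^{-1}_{2,\infty}$-bound in time, so that $\int_{|\xi|\le R}|\xi|^{2\sigma}|\widehat{(u,\tau)}|^2\,d\xi\lesssim R^{2\sigma+2}$ for all $\sigma\ge 0$, and run the Fourier splitting method on \eqref{P3} at the fractional scale $R(t)\sim(1+t)^{-1/(2\beta)}$: this first yields an arbitrary logarithmic decay $\|(u,\tau)\|_{L^2}\lesssim\ln^{-l}(e+t)$, and then, feeding the log-decay back into the nonlinear terms (notably through $\int_{|\xi|\le R}\int_0^t|\mathcal{F}Q(\nabla u,\tau)\cdot\bar{\hat{\tau}}|\,ds'\,d\xi\lesssim(1+t)^{-1/2}\int_0^t\|\tau\|_{L^2}^2\|\nabla u\|_{L^2}\,ds'$) and running a time-weighted estimate, the optimal rate $\|\Lambda^{s_1}(u,\tau)\|_{L^2}\lesssim(1+t)^{-(s_1+1)/(2\beta)}$, $s_1\in[0,1]$, recovering the $H^1$-decay of \cite{Wu1,Liushuai}. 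Interpolating with the uniform bound $\|\Lambda^s(u,\tau)\|_{L^2}\lesssim\delta$ then supplies decay of all intermediate norms, which is what the nonlinear estimates below require.

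For the top derivative, the coupling in \eqref{eq1} cancels at order $\Lambda^s$, so \eqref{estimate} gives no dissipation for $u$ there and we must use the corrector $\langle\Lambda^{s-\beta}\tau,-\nabla\Lambda^{s-\beta}u\rangle$. Its time derivative, using $\langle D(u),-\nabla u\rangle=-\frac{1}{2}\|\nabla u\|_{L^2}^2$ for divergence-free $u$, produces $-\frac{1}{2}\|\nabla\Lambda^{s-\beta}u\|_{L^2}^2$ plus errors of the form $\langle\Lambda^{s+\beta}\tau,\nabla\Lambda^{s-\beta}u\rangle$, $\|\Lambda^{s+1-\beta}\tau\|_{L^2}^2$ and nonlinear terms, which are absorbed into $\frac{1}{4}\|\nabla\Lambda^{s-\beta}u\|_{L^2}^2+(1+t)^a\|\Lambda^{s+\beta}\tau\|_{L^2}^2$ by Young's inequality and by interpolating $\|\Lambda^{s+1-\beta}\tau\|$ between $\|\Lambda^s\tau\|$ and $\|\Lambda^{s+\beta}\tau\|$ --- legitimate precisely because $\beta\ge\frac{1}{2}$, i.e.\ $s+1-\beta\le s+\beta$. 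With $a=2-\frac{1}{\beta}$ and $k$ small this gives
\begin{align*}
\frac{d}{dt}\overline{E}_{\beta}+c\,\overline{D}_{\beta}\lesssim a(1+t)^{a-1}\|\Lambda^s(u,\tau)\|_{L^2}^2+(\text{nonlinear}).
\end{align*}
We then run the improved Fourier splitting at $R(t)=c_0(1+t)^{-1/(2\beta)}$. On $|\xi|>R$ one has $(1+t)^a\|\Lambda^{s+\beta}\tau\|_{L^2}^2\gtrsim c_0^{2\beta}(1+t)^{a-1}\|\Lambda^s\tau\|^2_{L^2(|\xi|>R)}$ and $\|\nabla\Lambda^{s-\beta}u\|_{L^2}^2\gtrsim R^{2-2\beta}\|\Lambda^s u\|^2_{L^2(|\xi|>R)}=c_0^{2-2\beta}(1+t)^{a-1}\|\Lambda^s u\|^2_{L^2(|\xi|>R)}$ --- the identity $R^{2-2\beta}=c_0^{2-2\beta}(1+t)^{a-1}$ being exactly why $a=2-\frac{1}{\beta}$ --- so for $c_0$ large the dissipation absorbs the high-frequency part of the right-hand side above and keeps a multiple of $\frac{1}{1+t}(1+t)^a\|\Lambda^s(u,\tau)\|_{L^2}^2$; on $|\xi|\le R$ the first stage gives $(1+t)^a\int_{|\xi|\le R}|\xi|^{2s}|\widehat{(u,\tau)}|^2\,d\xi\lesssim(1+t)^a R^{2s+2}\simeq(1+t)^{2-(s+2)/\beta}$. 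Altogether $\frac{d}{dt}\overline{E}_{\beta}+\frac{\kappa}{1+t}\overline{E}_{\beta}\lesssim(1+t)^{1-(s+2)/\beta}+(\text{nonlinear})$ for some $\kappa>0$, the right side being dominated by its first term by the first stage; Gr\"onwall with integrating factor $(1+t)^{\kappa}$ then yields $\overline{E}_{\beta}\lesssim(1+t)^{2-(s+2)/\beta}$, and since $\overline{E}_{\beta}$ controls $(1+t)^a\|\Lambda^s(u,\tau)\|_{L^2}^2$ up to terms that (by $\beta\ge\frac{1}{2}$) decay at least as fast, we conclude $\|\Lambda^s(u,\tau)\|_{L^2}^2\lesssim(1+t)^{-a}\overline{E}_{\beta}\lesssim(1+t)^{-(s+1)/\beta}$, which is \eqref{high2}.

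For the lower bound, write $(u,\tau)=(u_L,\tau_L)+(u_N,\tau_N)$ with $(u_L,\tau_L)$ the solution of the linearization \eqref{P1}. Since $(u_L,\mathbb{P}\,\mathrm{div}\,\tau_L)$ solve \eqref{P2}, the symbol $\lambda^2+|\xi|^{2\beta}\lambda+|\xi|^2=0$ has, for small $\xi$, the genuinely complex roots $\lambda_\pm(\xi)=-\frac{1}{2}|\xi|^{2\beta}\pm i\sqrt{|\xi|^2-\frac{1}{4}|\xi|^{4\beta}}$ (complex precisely because $\beta\ge\frac{1}{2}$), so $|e^{\lambda_\pm(\xi)t}|\simeq e^{-|\xi|^{2\beta}t/2}$; using the oscillation to control the Duhamel term in $\tau_L$ and using $\widehat{(u_0,\tau_0)}(0)=\int_{\mathbb{R}^2}(u_0,\tau_0)\,dx\ne 0$ together with continuity of the Fourier transform, restricting to the annulus $|\xi|\sim(1+t)^{-1/(2\beta)}$ yields $\|\Lambda^s(u_L,\tau_L)\|_{L^2}\gtrsim c_\beta(1+t)^{-(s+1)/(2\beta)}$ for $t$ large. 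For the remainder we use Duhamel's formula, the decay just established, and the extra regularity $(u_0,\tau_0)\in H^{s+\beta}$ --- which controls $\|\Lambda^{s+\beta}(u,\tau)\|_{L^2}$ and, via the smoothing of the linear semigroup inside the Duhamel integral, buys an extra power of $(1+t)$ --- to obtain $\|\Lambda^s(u_N,\tau_N)\|_{L^2}\lesssim(1+t)^{-(s+1)/(2\beta)-\varepsilon_0}$ for some $\varepsilon_0>0$; the triangle inequality then gives \eqref{low2} for $t$ large, and continuity and positivity of $t\mapsto\|\Lambda^s(u,\tau)(t)\|_{L^2}$ cover the remaining bounded interval (possibly after shrinking $C_\beta$).

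The decisive obstacle is the second stage: the dissipation of \eqref{estimate} misses $u$ entirely at order $\Lambda^s$, and the corrector $\langle\Lambda^{s-\beta}\tau,-\nabla\Lambda^{s-\beta}u\rangle$ decays strictly more slowly than $\|\Lambda^s(u,\tau)\|_{L^2}^2$, so a direct Fourier splitting cannot close the estimate. The way out is the exact calibration of the time weight $(1+t)^a$ with $a=2-\frac{1}{\beta}$ against the fractional splitting scale $R(t)\sim(1+t)^{-1/(2\beta)}$ --- so that the unweighted $u$-dissipation naturally scales like $(1+t)^{a-1}\|\Lambda^s u\|_{L^2}^2$ on high frequencies --- with every absorption step quietly consuming $\beta\ge\frac{1}{2}$; the accompanying burden is to make all the $\Lambda^s$-level nonlinear commutator terms decay strictly faster than $(1+t)^{1-(s+2)/\beta}$ by means of the first-stage decay.
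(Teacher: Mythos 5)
Your treatment of the upper bound \eqref{high2} is essentially the paper's own argument: the same two-stage strategy (first the extra low-frequency dissipation \eqref{P3} via Calder\'on--Zygmund boundedness and the improved Fourier splitting to get the $H^1$-level rates of Proposition \ref{prop5}, then the time-weighted functionals $\overline{E}_\beta,\overline{D}_\beta$ with the calibration $a=2-\frac1\beta$ against the splitting radius $|\xi|^{2\beta}\sim(1+t)^{-1}$), and your high/low frequency bookkeeping matches \eqref{4ineq11}--\eqref{4ineq12}. One caveat on your closing step: writing $\frac{d}{dt}\overline{E}_\beta+\frac{\kappa}{1+t}\overline{E}_\beta\lesssim(1+t)^{1-(s+2)/\beta}$ and asserting that the corrector part of $\overline{E}_\beta$ ``decays at least as fast'' is not automatic — with only the stage-one decay, $\frac{1}{1+t}|\langle\Lambda^{s-\beta}\tau,\nabla\Lambda^{s-\beta}u\rangle|$ is not dominated by $(1+t)^{1-(s+2)/\beta}$ once $s$ is large. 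The paper closes this by Young's inequality plus interpolation of $\|\Lambda^{s+1-2\beta}\tau\|$ down to $\|\tau\|_{L^2}$, absorbing the large piece into the Fourier-split dissipation $\int(1+s')^{(s+1)/\beta}\|\Lambda^s(u,\tau)\|^2_{L^2}\,ds'$ (with $kC_2\geq C_s$), see \eqref{4ineq13}--\eqref{4ineq14}; your outline needs this absorption made explicit, but it is the same mechanism you already invoke elsewhere, so this is a presentational rather than structural issue.

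For the lower bound \eqref{low2} you take a genuinely different route, and it contains the one real gap. The paper (in the $\beta=1$ case, to which it refers for $\beta<1$) deliberately avoids spectral analysis of \eqref{P2}: it proves the linear lower bound by the weighted Fourier identity $\partial_t\bigl[e^{2|\xi|^{2\beta}t}|(\hat u_L,\hat\tau_L)|^2\bigr]\geq 0$-type computation (cf.\ \eqref{hineq8}--\eqref{hineq10}), which gives $|(\hat u_L,\hat\tau_L)|\geq e^{-|\xi|^{2\beta}t}|(\hat u_0,\hat\tau_0)|$ pointwise with no root analysis; and, crucially, it does \emph{not} show that the remainder $(u_N,\tau_N)$ decays strictly faster — it shows it decays at the \emph{same} rate $(1+t)^{-(s+1)/(2\beta)}$ but with a prefactor made small through $\delta$ and the choice of constants, which is enough for the triangle inequality. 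Your claim that Duhamel smoothing plus $(u_0,\tau_0)\in H^{s+\beta}$ yields $\|\Lambda^s(u_N,\tau_N)\|_{L^2}\lesssim(1+t)^{-(s+1)/(2\beta)-\varepsilon_0}$ is doubtful: the low-frequency Duhamel contribution is controlled by $\int_0^{t}\|(F,G)\|_{L^1}\,ds'\lesssim\int_0^t(1+s')^{-3/(2\beta)}ds'\leq C$, which on the shell $|\xi|\sim(1+t)^{-1/(2\beta)}$ produces exactly the rate $(1+t)^{-(s+1)/(2\beta)}$ and no extra power; extra regularity of the data does not improve this, since the obstruction sits in the nonlinear forcing at low frequencies. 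So either supply a genuine mechanism for the $\varepsilon_0$ gain or, as the paper does, replace ``strictly faster'' by ``same rate with constant $\leq\frac{C_\beta}{2}$'' via the smallness of $\delta$ in the nonlinear energy estimates for $(u_N,\tau_N)$.
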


\begin{rema}
The classical result about large time behaviour often supposed that the initial data belongs to $L^1$(See \cite{Schonbek1985}).
Since $L^1\hookrightarrow \dot{B}^{-1}_{2,\infty}$, it follows that the above results still hold true when $(u_0,\tau_0)\in L^1$.
\end{rema}
The paper is organized as follows. In Section 2 we introduce some lemmas which will be used in the sequel. In Section 3 we prove optimal decay rate for the 2-D generalized Oldroyd-B model in critical case by virtue of a different Fourier splitting method and the
time weighted energy estimate. In Section 4 we prove optimal decay rate for the highest derivative of the solution to 2-D generalized Oldroyd-B model in fractional case by the improved Fourier splitting method and the time weighted energy estimate.

\section{Preliminaries}
In this section we introduce some lemmas which will be used in the sequel.

The Littlewood-Paley decomposition theory and and Besov spaces are given as follows.
\begin{lemm}\cite{Bahouri2011}\label{Lemma0}
	Let $\mathcal{C}=\{\xi\in\mathbb{R}^2:\frac 3 4\leq|\xi|\leq\frac 8 3\}$. There exists radial function $\varphi$, valued in the interval $[0,1]$, belonging respectively to $\mathcal{D}(\mathcal{C})$, and
	$$ \forall\xi\in\mathbb{R}^2\backslash\{0\},\ \sum_{j\in\mathbb{Z}}\varphi(2^{-j}\xi)=1, $$
	$$ |j-j'|\geq 2\Rightarrow\mathrm{Supp}\ \varphi(2^{-j}\cdot)\cap \mathrm{Supp}\ \varphi(2^{-j'}\cdot)=\emptyset. $$
	Moreover, there holds
	$$ \forall\xi\in\mathbb{R}^2\backslash\{0\},\ \frac 1 2\leq\sum_{j\in\mathbb{Z}}\varphi^2(2^{-j}\xi)\leq 1. $$
\end{lemm}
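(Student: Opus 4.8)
This is the classical dyadic partition-of-unity construction, so the plan is to build $\varphi$ explicitly from a single radial bump function and then verify the three assertions by elementary manipulations. First I would fix a radial function $\chi\in\mathcal{D}(B(0,4/3))$, valued in $[0,1]$, radially non-increasing, with $\chi\equiv 1$ on the ball $B(0,3/4)$; such a $\chi$ exists by the standard mollification of the indicator of a ball. Then I would set
$$\varphi(\xi)=\chi(\xi/2)-\chi(\xi).$$
Because $|\xi/2|<|\xi|$ and $\chi$ is radially non-increasing we get $\varphi\geq 0$, while $\varphi\leq\chi(\xi/2)\leq 1$, so $\varphi$ is valued in $[0,1]$. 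For the support: $\chi(\xi)=1$ whenever $|\xi|\leq 3/4$ forces $\chi(\xi/2)=\chi(\xi)=1$ there, so $\varphi$ vanishes on $|\xi|\leq 3/4$; and $\chi(\xi/2)$ is supported in $|\xi|\leq 8/3$, so $\varphi\in\mathcal{D}(\mathcal{C})$ with $\mathcal{C}$ the stated annulus.

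The partition-of-unity identity would follow from a telescoping argument. Writing $\varphi(2^{-j}\xi)=\chi(2^{-(j+1)}\xi)-\chi(2^{-j}\xi)$, the partial sums collapse:
$$\sum_{j=-N}^{M}\varphi(2^{-j}\xi)=\chi(2^{-(M+1)}\xi)-\chi(2^{N}\xi).$$
For fixed $\xi\neq 0$, sending $M\to\infty$ gives $\chi(2^{-(M+1)}\xi)\to\chi(0)=1$, while sending $N\to\infty$ gives $\chi(2^{N}\xi)\to 0$, since $|2^{N}\xi|\to\infty$ leaves the compact support of $\chi$; hence $\sum_{j\in\mathbb{Z}}\varphi(2^{-j}\xi)=1$.

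For the support-separation statement I would note that $\varphi(2^{-j}\cdot)$ is supported in the dilated annulus $2^{j}\mathcal{C}=\{\frac34 2^{j}\leq|\xi|\leq\frac83 2^{j}\}$. Two such annuli for indices $j$ and $j'$ with $j'\geq j+2$ are disjoint because the inner radius of the larger, $\frac34 2^{j'}\geq\frac34 2^{j+2}=3\cdot 2^{j}$, exceeds the outer radius of the smaller, $\frac83 2^{j}<3\cdot 2^{j}$; this is exactly the inequality $\frac83<3$, which is where the choice of constants matters. Consequently, at each $\xi\neq 0$ at most two consecutive terms $\varphi(2^{-j}\xi)$ are nonzero.

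The two-sided bound on $\sum\varphi^{2}$ is then a short consequence. The upper bound uses $0\leq\varphi\leq 1\Rightarrow\varphi^{2}\leq\varphi$, so $\sum_{j}\varphi^{2}(2^{-j}\xi)\leq\sum_{j}\varphi(2^{-j}\xi)=1$. For the lower bound, fix $\xi\neq 0$ and let $a,b$ denote the at most two nonzero values among the $\varphi(2^{-j}\xi)$, so that $a+b=1$; the power-mean inequality $a^{2}+b^{2}\geq\frac12(a+b)^{2}=\frac12$ then gives $\sum_{j}\varphi^{2}(2^{-j}\xi)\geq\frac12$. The only point requiring genuine care is the bookkeeping in this last step, namely ensuring that no more than two dyadic annuli meet at any point, which is precisely what the separation inequality $\frac83<3$ guarantees; once that is in hand, the estimates are immediate.
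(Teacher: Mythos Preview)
Your construction and verification are correct and constitute the standard proof of this dyadic partition-of-unity lemma. The paper does not supply a proof at all: the lemma is stated with a citation to \cite{Bahouri2011} and used as a black box, so there is no ``paper's own proof'' to compare against. What you have written is essentially the argument given in that reference (Bahouri--Chemin--Danchin), and every step --- the telescoping sum, the annulus-separation inequality $\tfrac{8}{3}<3$, and the two-term Cauchy--Schwarz bound $a^{2}+b^{2}\geq\tfrac12(a+b)^{2}$ --- is sound.
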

$\mathcal{F}$ denotes the Fourier transform and  its inverse is represented by $\mathcal{F}^{-1}$.
Suppose that $u$ is a tempered distribution in $\mathcal{S}'_h(\mathbb{R}^2)$. For all $j\in\mathbb{Z}$, define
$$\dot{\Delta}_j u=\mathcal{F}^{-1}(\varphi(2^{-j}\cdot)\mathcal{F}u).$$
Then the Littlewood-Paley decomposition is defined by:
$$ u=\sum_{j\in\mathbb{Z}}\dot{\Delta}_j u \quad \text{in}\ \mathcal{S}'(\mathbb{R}^2). $$
Let $s\in\mathbb{R},\ 1\leq p,r\leq\infty.$ The homogeneous Besov space $\dot{B}^s_{p,r}$ is given as follows
$$ \dot{B}^s_{p,r}=\{u\in \mathcal{S}'_h:\|u\|_{\dot{B}^s_{p,r}}=\Big\|(2^{js}\|\dot{\Delta}_j u\|_{L^p})_j \Big\|_{l^r(\mathbb{Z})}<\infty\}.$$

Take $\Lambda f=\mathcal{F}^{-1}(|\xi|\mathcal{F}(f)),$
we introduce the Gagliardo-Nirenberg inequality of Sobolev type with $d=2$.
\begin{lemm}\cite{1959On}\label{Lemma1}
For $d=2,~p\in[2,+\infty)$ and $0\leq s,s_1\leq s_2$, there holds
$$\|\Lambda^{s}f\|_{L^{p}}\leq C \|\Lambda^{s_1}f\|^{1-\theta}_{L^{2}}\|\Lambda^{s_2} f\|^{\theta}_{L^{2}},$$
where $0\leq\theta\leq1$ and
$$ s+1-\frac 2 p=s_1 (1-\theta)+\theta s_2.$$
Note that we also require that $0<\theta<1$, $0\leq s_1\leq s$, when $p=\infty$.
\end{lemm}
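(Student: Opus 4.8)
The plan is to prove this fractional Gagliardo--Nirenberg inequality by frequency localization, reducing it to a single Bernstein estimate on each dyadic block followed by a H\"older interpolation across the scales. Throughout I write $\sigma:=s+1-\frac 2 p$, which by the scaling hypothesis of the lemma equals $s_1(1-\theta)+\theta s_2$; this identity is the crux and is used only at the very end. Since $\dot\Delta_j f$ has Fourier support in the annulus $\{|\xi|\sim 2^j\}$ and $\Lambda^s$ commutes with $\dot\Delta_j$, Bernstein's inequality with $d=2$ gives
$$\|\Lambda^s\dot\Delta_j f\|_{L^p}\lm 2^{js}2^{2(\frac 1 2-\frac 1 p)j}\|\dot\Delta_j f\|_{L^2}=2^{j\sigma}\|\dot\Delta_j f\|_{L^2}.$$

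For $p\in[2,\infty)$ I would then pass from the blocks to the full norm. The Littlewood--Paley square-function characterization of $L^p$ (valid for $1<p<\infty$) together with Minkowski's inequality for mixed norms (valid precisely because $p\ge 2$) gives
$$\|\Lambda^s f\|_{L^p}\lm \Big\|\Big(\sum_j|\Lambda^s\dot\Delta_j f|^2\Big)^{\frac 1 2}\Big\|_{L^p}\le\Big(\sum_j\|\Lambda^s\dot\Delta_j f\|^2_{L^p}\Big)^{\frac 1 2}\lm\Big(\sum_j 2^{2j\sigma}\|\dot\Delta_j f\|^2_{L^2}\Big)^{\frac 1 2}.$$
Writing $a_j:=\|\dot\Delta_j f\|_{L^2}$ and using $2^{2j\sigma}=(2^{2js_1})^{1-\theta}(2^{2js_2})^{\theta}$, which is exactly the scaling identity, H\"older's inequality in $j$ with conjugate exponents $\frac 1 {1-\theta}$ and $\frac 1 \theta$ yields
$$\sum_j 2^{2j\sigma}a_j^2=\sum_j(2^{2js_1}a_j^2)^{1-\theta}(2^{2js_2}a_j^2)^{\theta}\le\Big(\sum_j 2^{2js_1}a_j^2\Big)^{1-\theta}\Big(\sum_j 2^{2js_2}a_j^2\Big)^{\theta}.$$
Finally the near-orthogonality of the blocks in $L^2$ (Plancherel) gives $\sum_j 2^{2js_i}a_j^2\approx\|\Lambda^{s_i}f\|^2_{L^2}$ for $i=1,2$; substituting and taking square roots produces the claimed bound.

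For the endpoint $p=\infty$ flagged at the end of the lemma the square-function step is unavailable, so instead I would sum the blocks directly, $\|\Lambda^s f\|_{L^\infty}\le\sum_j\|\Lambda^s\dot\Delta_j f\|_{L^\infty}\lm\sum_j 2^{j\sigma}a_j$ with $\sigma=s+1$, split the sum at a level $N$, bound $a_j\le 2^{-js_1}\|\Lambda^{s_1}f\|_{L^2}$ on $\{j\le N\}$ and $a_j\le 2^{-js_2}\|\Lambda^{s_2}f\|_{L^2}$ on $\{j>N\}$, sum the two geometric series, and optimize over $N$. These series converge exactly when $s_1<\sigma<s_2$, i.e. $0<\theta<1$, which is why the lemma imposes strict interpolation exponents and $0\le s_1\le s$ in the endpoint case. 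The only genuinely delicate point in the whole argument is the passage from the dyadic square function to $\ell^2$ of the $L^p$-norms, the single place where $p\ge 2$ is essential; everything else is Bernstein together with H\"older, so I expect no real obstacle beyond keeping the exponent bookkeeping consistent with the scaling relation.
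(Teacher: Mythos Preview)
Your proof is correct. The paper does not prove this lemma at all: it is stated as a preliminary result with a citation to Nirenberg's 1959 paper, so there is no in-paper argument to compare against.

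That said, your route is genuinely different from the classical one. Nirenberg's original proof works in physical space via iterated one-dimensional integral inequalities and interpolation of H\"older norms; your argument is purely frequency-side, using Bernstein on each dyadic block, the $L^p$ square-function characterization plus the Minkowski step $L^p(\ell^2)\hookrightarrow \ell^2(L^p)$ for $p\ge 2$, and then H\"older in the block index with the splitting $2^{2j\sigma}=(2^{2js_1})^{1-\theta}(2^{2js_2})^{\theta}$. This Littlewood--Paley approach is more modular and makes the role of the scaling identity $\sigma=s_1(1-\theta)+\theta s_2$ completely transparent; it also meshes naturally with the Besov-space machinery already set up in the paper (Lemma~\ref{Lemma0}). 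Your treatment of the endpoint $p=\infty$ by splitting the dyadic sum at a level $N$ and optimizing is likewise standard and correctly explains why the strict condition $0<\theta<1$ (equivalently $s_1<s+1<s_2$) is needed there.
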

\begin{lemm}\cite{Moser1966A}\label{Lemma2}
Assume that $s\geq 1$, $p,p_1,p_4\in (1,\infty)$ and $\frac 1 p =\frac 1 {p_1}+\frac 1 {p_2}=\frac 1 {p_3}+\frac 1 {p_4}$, then we obtain
\begin{align*}
\|[\Lambda^s, f]g\|_{L^p}\leq C(\|\Lambda^{s}f\|_{L^{p_1}}\|g\|_{L^{p_2}}+\|\nabla f\|_{L^{p_3}}\|\Lambda^{s-1}g\|_{L^{p_4}}).
\end{align*}
\end{lemm}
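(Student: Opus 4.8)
This is the classical Kato--Ponce commutator estimate, and the plan is to prove it through Bony's paraproduct decomposition combined with the Littlewood--Paley apparatus of Lemma~\ref{Lemma0}. Writing $S_{j-1}h=\sum_{j'\le j-2}\dot{\Delta}_{j'}h$, the paraproduct $T_fg=\sum_{j}S_{j-1}f\,\dot{\Delta}_jg$ and the remainder $R(f,g)=\sum_{|j-j'|\le1}\dot{\Delta}_jf\,\dot{\Delta}_{j'}g$, I would decompose both $\Lambda^s(fg)$ and $f\Lambda^sg$ via $fg=T_fg+T_gf+R(f,g)$ and organize the commutator into three pieces,
\begin{align*}
[\Lambda^s,f]g=\big(\Lambda^s(T_fg)-T_f\Lambda^sg\big)+\big(\Lambda^s(T_gf)-T_{\Lambda^sg}f\big)+\big(\Lambda^sR(f,g)-R(f,\Lambda^sg)\big)=:\mathrm{I}+\mathrm{II}+\mathrm{III}.
\end{align*}
The purpose of this arrangement is that the dangerous top-order interaction $f\Lambda^sg$ is carried entirely by the low-frequency factor of $T_f\Lambda^sg$ and is cancelled inside $\mathrm I$; this is precisely why no term of the form $\|f\|_{L^\infty}\|\Lambda^sg\|$ appears on the right-hand side.

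The heart of the argument is $\mathrm I=\sum_j[\Lambda^s,S_{j-1}f]\dot{\Delta}_jg$. Since $S_{j-1}f$ is spectrally supported in a ball of radius $\sim2^{j}$ while $\dot{\Delta}_jg$ sits in an annulus of size $2^j$, each summand is frequency-localized at scale $2^j$, and I would represent it by the kernel $K_j=\mathcal{F}^{-1}\big(|\xi|^s\widetilde\varphi(2^{-j}\xi)\big)$, with $\widetilde\varphi$ adapted to the support of $\varphi$, so that
\begin{align*}
[\Lambda^s,S_{j-1}f]\dot{\Delta}_jg(x)=\int K_j(x-y)\big(S_{j-1}f(x)-S_{j-1}f(y)\big)\dot{\Delta}_jg(y)\,dy.
\end{align*}
The mean value theorem $S_{j-1}f(x)-S_{j-1}f(y)=(x-y)\cdot\int_0^1\nabla S_{j-1}f(x+t(y-x))\,dt$ together with the scaling bound $\big\||\cdot|K_j\big\|_{L^1}\lesssim2^{(s-1)j}$ yields the single-block gain $\|[\Lambda^s,S_{j-1}f]\dot{\Delta}_jg\|_{L^p}\lesssim2^{(s-1)j}\|\nabla f\|_{L^{p_3}}\|\dot{\Delta}_jg\|_{L^{p_4}}$. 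Because these summands are spectrally supported in annuli $\sim2^j$, I would then sum in $j$ through the Littlewood--Paley square-function equivalence (valid since $1<p<\infty$), using $2^{(s-1)j}\|\dot{\Delta}_jg\|_{L^{p_4}}\sim\|\dot{\Delta}_j\Lambda^{s-1}g\|_{L^{p_4}}$, to conclude $\|\mathrm I\|_{L^p}\lesssim\|\nabla f\|_{L^{p_3}}\|\Lambda^{s-1}g\|_{L^{p_4}}$. The twin piece $T_{\Lambda^sg}f=\sum_jS_{j-1}(\Lambda^sg)\,\dot{\Delta}_jf$ is of the same type: Bernstein gives $\|S_{j-1}\Lambda^sg\|_{L^{p_4}}\lesssim2^{j}\|\Lambda^{s-1}g\|_{L^{p_4}}$ on the low-frequency factor, while $2^j\|\dot{\Delta}_jf\|_{L^{p_3}}\sim\|\dot{\Delta}_j\nabla f\|_{L^{p_3}}$ on the high-frequency factor, so after square-function summation it again contributes only to the second term of the right-hand side.

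The remaining pieces produce the first term. In $\Lambda^s(T_gf)=\sum_j\Lambda^s\big(S_{j-1}g\,\dot{\Delta}_jf\big)$ the product sits in an annulus $\sim2^j$, so $\Lambda^s$ costs a factor $2^{sj}$ and Hölder with the $(p_1,p_2)$ split gives the per-block bound $2^{sj}\|\dot{\Delta}_jf\|_{L^{p_1}}\|S_{j-1}g\|_{L^{p_2}}\lesssim\|\dot{\Delta}_j\Lambda^sf\|_{L^{p_1}}\|g\|_{L^{p_2}}$; square-function summation then yields $\|\Lambda^sf\|_{L^{p_1}}\|g\|_{L^{p_2}}$. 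The remainder $\mathrm{III}$ is treated similarly but requires extra care, because the products $\dot{\Delta}_jf\,\dot{\Delta}_{j'}g$ with $|j-j'|\le1$ live in balls rather than annuli: I would apply $\dot{\Delta}_k$ to the output, note that only blocks with $j\ge k-N$ contribute, distribute $\Lambda^s$ onto the two factors, and sum the resulting geometric-type series, whose convergence is guaranteed by the hypothesis $s\ge1>0$.

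I expect the main obstacle to be concentrated in $\mathrm I$ and to be twofold. First, establishing the single-block commutator gain $\|[\Lambda^s,S_{j-1}f]\dot{\Delta}_jg\|_{L^p}\lesssim2^{(s-1)j}\|\nabla f\|_{L^{p_3}}\|\dot{\Delta}_jg\|_{L^{p_4}}$ demands the kernel moment bound $\big\||\cdot|K_j\big\|_{L^1}\lesssim2^{(s-1)j}$ and the mean value argument above; this is the only step that actually \emph{creates} the transfer of one derivative from $g$ to $f$, and it is what prevents the estimate from degenerating into the trivial $\|f\|_{L^\infty}\|\Lambda^sg\|$ bound. Second, every passage from single-block bounds to the full sum is \emph{not} a triangle inequality but rests on the Littlewood--Paley square-function characterization of $L^p$, hence on the restriction $1<p,p_1,p_2,p_3,p_4<\infty$; the ball-versus-annulus geometry of the remainder, which forces the use of $s>0$ to sum, is a secondary but genuine technical point.
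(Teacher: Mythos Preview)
The paper does not supply a proof of this lemma; it is simply quoted from the literature as a known result (the citation is to Moser, though the estimate in this form is commonly attributed to Kato--Ponce and to Kenig--Ponce--Vega). Your paraproduct outline is the standard modern proof and is correct in all essential points.

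One small remark: you write that the square-function step ``rests on the restriction $1<p,p_1,p_2,p_3,p_4<\infty$'', but the lemma as stated only restricts $p,p_1,p_4$. The exponents $p_2$ and $p_3$ are allowed to equal $\infty$, and your argument already accommodates this: the low-frequency factors $S_{j-1}g$ and $\nabla S_{j-1}f$ are bounded pointwise by $C\|g\|_{L^\infty}$ and $C\|\nabla f\|_{L^\infty}$ via Young's inequality, and no square-function summation is applied in those exponents.
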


\section{Optimal decay rate with $\beta=1$}
In this section, we present optimal decay rate for the generalized Oldroyd-B model \eqref{eq1} in critical case with $\beta=1$. Inspired by \cite{He2009}, we consider the coupling effect between $(u ,\tau)$.  We need to introduce the following energy and dissipation functionals for $(u,\tau)$:
$$E_\theta=\|\Lambda^\theta(u,\tau)\|^2_{H^{s-\theta}}+2k\langle \nabla\Lambda^\theta u,  \Lambda^\theta\tau \rangle_{H^{s-1-\theta}},$$
and
$$D_\theta=\frac k 2\|\nabla \Lambda^\theta u\|^2_{H^{s-1-\theta}}+\|\nabla\Lambda^\theta\tau\|^2_{H^{s-\theta}},$$
where $\theta=0~or~1$. The key point is the refinement of Schonbek's \cite{Schonbek} strategy. Using a different Fourier splitting method and taking Fourier transform in \eqref{eq1}, we obtain the initial $L^2$ decay rate in following proposition.
\begin{prop}\label{prop1}
	Assume that $(u_0,\tau_0)$ satisfy the condition in Theorem \ref{th1} and $(u_0,\tau_0)\in \dot{B}^{-1}_{2,\infty}$.
	For any $l\in N^{+}$, there exists constant $C_l$ such that
	\begin{align}\label{decay1}
	E_0(t)\leq C_{l}\ln^{-l}(e+t).
	\end{align}
\end{prop}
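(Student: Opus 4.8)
The plan is to run a Fourier splitting argument on the energy functional $E_0$, but since we are in the critical dimension $d=2$ the usual argument only yields a logarithmic gain, which we then iterate $l$ times to reach $\ln^{-l}(e+t)$. First I would note that $E_0(t)$ is equivalent to $\|(u,\tau)\|_{H^s}^2$ because $k$ is small, and from \eqref{estimate} in Theorem \ref{th1} we have the differential inequality $\frac{d}{dt}E_0 + D_0 \le 0$ with $D_0 \gtrsim \|\nabla u\|_{H^{s-1}}^2 + \|\Lambda\tau\|_{H^s}^2$. The missing piece in $D_0$ is control of the low frequencies of $u$ and $\tau$ themselves (there is no damping on $\tau$). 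To handle this, I would split the frequency space at a time-dependent radius: take $S(t)=\{\xi : |\xi| \le g(t)\}$ where $g(t)$ will be chosen of the form $g(t)^2 \sim \frac{C\ln\ln(e+t)}{(e+t)\ln(e+t)}$ or similar, so that $\int_0^\infty g(t)^2\,dt$ is logarithmically divergent rather than convergent — this is precisely the refinement of Schonbek's strategy the introduction advertises. On $S(t)^c$ we bound $D_0$ from below by $g(t)^2 E_0$ (up to constants), giving $\frac{d}{dt}E_0 + c\,g(t)^2 E_0 \lesssim g(t)^2 \int_{S(t)}|\widehat{(u,\tau)}|^2\,d\xi$.

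Next I would estimate the low-frequency integral $\int_{S(t)}|\widehat{(u,\tau)}(\xi)|^2\,d\xi$ using the equations in Duhamel form after taking the Fourier transform of \eqref{eq1}. The linear part contributes the initial data, and here the hypothesis $(u_0,\tau_0)\in\dot B^{-1}_{2,\infty}$ enters: $\int_{S(t)}|\widehat{(u_0,\tau_0)}|^2\,d\xi \lesssim \|(u_0,\tau_0)\|_{\dot B^{-1}_{2,\infty}}^2\, g(t)^2 \cdot$ (a logarithmic factor coming from dyadic summation $\sum_{2^j\le g(t)} 2^{2j} \lesssim g(t)^2$, actually just $\lesssim g(t)^2$, but the Besov norm handles the behaviour as $|\xi|\to 0$ without needing $L^1$). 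The nonlinear terms $\mathrm{div}\,\tau$, $u\cdot\nabla u$, $u\cdot\nabla\tau$, $Q(\nabla u,\tau)$, $D(u)$ are estimated on low frequencies by $|\widehat{f\cdot g}(\xi)| \le \|fg\|_{L^1} \le \|f\|_{L^2}\|g\|_{L^2}$, so that $\int_{S(t)}|\mathcal F(\text{nonlinear})|^2 \lesssim g(t)^2\big(\int_0^t \text{(products of }L^2\text{ norms)}\big)^2$; crucially each product contains at least one factor of $\|\nabla u\|_{L^2}$ or $\|\Lambda\tau\|_{L^2}$ which is time-integrable against $D_0$. The key inequality quoted in the introduction, $\int_{S(t)}\int_0^t|\mathcal F Q(\nabla u,\tau)\cdot\bar{\hat\tau}|\,ds'\,d\xi \lesssim (1+t)^{-1/2}\int_0^t\|\tau\|_{L^2}^2\|\nabla u\|_{L^2}\,ds'$, is the prototype here, though for the pure $L^2$ bootstrap at this stage I only need the cruder product estimates.

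Then I would assemble everything into a Gronwall-type inequality. Writing $h(t)=\int_0^t g(s)^2\,ds$ (which grows like $\ln\ln(e+t)$ or a fixed power of $\ln\ln$), multiplying by the integrating factor $e^{c\,h(t)}$ and integrating, one obtains an estimate of the form $E_0(t) \lesssim e^{-ch(t)}\big(E_0(0) + \int_0^t g(s)^2 e^{ch(s)}[\text{data} + \text{nonlinear}]\,ds\big)$. Choosing $g$ so that $e^{ch(t)} \sim \ln^{m}(e+t)$ for a suitable exponent, and using the a priori smallness of the solution to absorb the nonlinear contributions, a first pass gives $E_0(t) \lesssim \ln^{-1}(e+t)$ (the base case $l=1$). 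For the inductive step, I would feed the decay rate $E_0 \lesssim \ln^{-l}(e+t)$ back into the low-frequency nonlinear estimate: since the products there are quadratic (or better) in the solution, the nonlinear source term now decays like $\ln^{-2l}$ against the integrating factor, which after re-running Gronwall upgrades the rate to $\ln^{-(l+1)}(e+t)$. Iterating finitely many times yields \eqref{decay1} for every $l\in\mathbb N^+$.

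The main obstacle is the careful bookkeeping in the Fourier-splitting/Gronwall step: one must choose the splitting radius $g(t)$ so that $\int_0^\infty g^2$ diverges (to get \emph{some} decay at all, unlike the algebraic case $d\ge 3$ where $g^2 \sim C/(1+t)$ works), yet slowly enough and with the right constant $c$ that the integrating factor $e^{c\int_0^t g^2}$ beats only a logarithm per iteration, while simultaneously ensuring the time integrals of the nonlinear source terms — which involve $\int_0^t g(s)^2 e^{c h(s)}\|\tau\|_{L^2}^2\|\nabla u\|_{L^2}\,ds$ type expressions — remain bounded by what the induction hypothesis and the dissipation $\int_0^\infty D_0\,ds < \infty$ allow. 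Balancing these three requirements (divergence of $h$, logarithmic growth of the integrating factor, absorbability of the nonlinearity) is the delicate heart of the proof; the algebra of the individual nonlinear estimates is routine by Lemmas \ref{Lemma1} and \ref{Lemma2} once the scheme is fixed.
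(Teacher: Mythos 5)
Your overall scheme is essentially the one the paper uses: a Fourier splitting radius with $g(t)^2\sim f'(t)/f(t)$, $f(t)=\ln^{3}(e+t)$ (your integrating factor $e^{c\int_0^t g^2}\sim\ln^{m}(e+t)$ is exactly the paper's multiplication by $f(t)$), the low-frequency bound on the initial data through $\dot B^{-1}_{2,\infty}$ and dyadic summation, the bound of the quadratic terms by $\|fg\|_{L^1}\le\|f\|_{L^2}\|g\|_{L^2}$ combined with Cauchy--Schwarz in time against $\int_0^t D_0\,ds'<\infty$, a first logarithmic rate, and a bootstrap to arbitrary $l$. (The paper's first pass gives $\ln^{-1/2}(e+t)$ rather than $\ln^{-1}$, and each iteration gains roughly half a power of the logarithm; that is only bookkeeping.)

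The genuine gap is your treatment of the linear coupling terms $\mathrm{div}\,\tau$ and $D(u)$ in the low-frequency estimate. You list them among the ``nonlinear terms'' to be estimated by $|\widehat{fg}(\xi)|\le\|fg\|_{L^1}\le\|f\|_{L^2}\|g\|_{L^2}$, but they are linear, so no product estimate applies; and if instead you keep them as Duhamel sources with only the available $L^2$ control on the ball $S(t)$, the step fails: the contribution of $i\xi_k\hat\tau^{jk}$ to $\hat u$ is bounded only by $\int_{S(t)}\bigl(\int_0^t|\xi||\hat\tau|\,ds'\bigr)^2 d\xi\lesssim g(t)^2\,t\int_0^t\|\tau\|^2_{L^2}\,ds'\sim t/\ln(e+t)$, which grows in time, whereas the whole argument needs the low-frequency integral to stay bounded (indeed to decay logarithmically); the same problem occurs for $D(u)$ in the $\tau$-equation. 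The paper avoids this by writing the $L^2$ energy identity pointwise in $\xi$, i.e.\ \eqref{eq3}: since $\mathcal{R}e[i\xi\otimes\bar{\hat u}:\hat\tau]+\mathcal{R}e[i\xi\otimes\hat u:\bar{\hat\tau}]=0$, the terms $\mathrm{div}\,\tau$ and $D(u)$ cancel exactly and only $F=-u\cdot\nabla u$ and $G=-u\cdot\nabla\tau-Q(\nabla u,\tau)$ survive, paired with $\bar{\hat u}$, $\bar{\hat\tau}$, which is what makes your product-plus-dissipation estimate legitimate. You need either this cancellation or uniform low-frequency bounds on the semigroup of the full linearized wave-type system (spectral analysis you do not carry out); with that ingredient inserted, the rest of your argument goes through as in the paper.
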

\begin{proof}
	Taking $\beta=1$ in Theorem \ref{th1}, we get the following global energy estimate:
	\begin{align}\label{3ineq1}
	\frac d {dt} E_0(t)+D_0(t)\leq 0.
	\end{align}
	Denote that $S_0(t)=\{\xi:|\xi|^2\leq 2C_2\frac {f'(t)} {f(t)}\}$, where $f(t)=\ln^{3}(e+t)$ and $C_2$ is large enough. By \eqref{3ineq1}, we deduce that
	\begin{align}\label{3ineq2}
	\frac d {dt} [f(t)E_0(t)]+C_2 f'(t)(\frac k 2\|u\|^2_{H^{s}}+\|\tau\|^2_{H^{s}})\leq Cf'(t)\int_{S_0(t)}|\hat{u}(\xi)|^2+|\hat{\tau}(\xi)|^2d\xi.
	\end{align}
	
	The $L^2$ estimate to the low frequency part of $(u,\tau)$ is useful for studying time decay rates. Applying Fourier transform to \eqref{eq1}, we have
	\begin{align}\label{eq2}
	\left\{
	\begin{array}{ll}
	\hat{u}^{j}_t+i\xi_{j} \hat{P}-i\xi_{k} \hat{\tau}^{jk}=\hat{F}^j,  \\[1ex]
	\hat{\tau}^{jk}_t+|\xi|^2\hat{\tau}^{jk}-\frac i 2(\xi_{k} \hat{u}^j+\xi_{j} \hat{u}^k)=\hat{G}^{jk},\\[1ex]
	\end{array}
	\right.
	\end{align}
	where $F=-u\cdot\nabla u$ and $G=-u\cdot\nabla \tau-Q(\nabla u, \tau)$. According to  $\mathcal{R}e[i\xi\otimes\bar{\hat{u}}(t,\xi):\hat{\tau}]+\mathcal{R}e[i\xi\otimes\hat{u}:\bar{\hat{\tau}}]=0$
	and \eqref{eq2}, we infer that
	\begin{align}\label{eq3}
	\frac 1 2 \frac d {dt} (|\hat{u}|^2+|\hat{\tau}|^2)
	+|\xi|^2|\hat{\tau}|^2  =\mathcal{R}e[\hat{F}\cdot\bar{\hat{u}}]+\mathcal{R}e[\hat{G}\bar{\hat{\tau}}].
	\end{align}
	Integrating \eqref{eq3} in time on $[0,t]$, we get
	\begin{align}\label{3ineq3}
	|\hat{u}|^2+|\hat{\tau}|^2
	\leq C(|\hat{u}_0|^2+|\hat{\tau}_0|^2)+C\int_{0}^{t}|\hat{F}\cdot\bar{\hat{u}}|+|\hat{G}\cdot\bar{\hat{\tau}}|ds'.
	\end{align}
	Integrating \eqref{3ineq3} over $S_0(t)$ with $\xi$, then we obtain the following estimate for \eqref{eq2}:
	\begin{align}\label{3ineq4}
	\int_{S_0(t)}|\hat{u}|^2+|\hat{\tau}|^2 d\xi
	\leq C\int_{S_0(t)} |\hat{u}_0|^2+|\hat{\tau}_0|^2 d\xi +C\int_{S_0(t)}\int_{0}^{t}|\hat{F}\cdot\bar{\hat{u}}|+|\hat{G}\cdot\bar{\hat{\tau}}| ds'd\xi.
	\end{align}
	According to $E_0(0)<\infty$, $(u_0,\tau_0)\in \dot{B}^{-1}_{2,\infty}$ and applying Lemma \ref{Lemma0}, we infer that
	\begin{align}\label{3ineq5}
	\int_{S_0(t)}(|\hat{u}_0|^2+|\hat{\tau}_0|^2)d\xi
	&\leq\sum_{j\leq \log_2[\frac {4} {3}C_2^{\frac 1 2 }\sqrt{\frac {f'(t)}{f(t)}}]}\int_{\mathbb{R}^{2}} 2\varphi^2(2^{-j}\xi)(|\hat{u}_0|^2+|\hat{\tau}_0|^2)d\xi  \\ \notag
	&\leq\sum_{j\leq \log_2[\frac {4} {3}C_2^{\frac 1 2 }\sqrt{\frac {f'(t)}{f(t)}}]}C(\|\dot{\Delta}_j u_0\|^2_{L^2}+\|\dot{\Delta}_j \tau_0\|^2_{L^2}) \\ \notag
	&\leq C\frac {f'(t)}{f(t)}\|(u_0,\tau_0)\|^2_{\dot{B}^{-1}_{2,\infty}}.
	\end{align}
	By \eqref{3ineq1} and Minkowski's inequality, we obtain
	\begin{align}\label{3ineq6}
	\int_{S_0(t)}\int_{0}^{t}|\hat{F}\cdot\bar{\hat{u}}|+|\hat{G}\cdot\bar{\hat{\tau}}|  ds'd\xi
	&=\int_{0}^{t}\int_{S_0(t)}|\hat{F}\cdot\bar{\hat{u}}|+|\hat{G}\cdot\bar{\hat{\tau}}|d\xi ds'  \\ \notag
	&\leq C\sqrt{\frac {f'(t)} {f(t)}} \int_{0}^{t}(\|u\|^2_{L^{2}}+\|\tau\|^2_{L^{2}})D_0(s')^{\frac 1 2}ds'  \\ \notag
	&\leq C\sqrt{\frac {f'(t)} {f(t)}}(1+t)^{\frac 1 2}.
	\end{align}
	It follows from \eqref{3ineq4}-\eqref{3ineq6} that
	\begin{align}\label{3ineq7}
	\int_{S_0(t)}|\hat{u}|^2+|\hat{\tau}|^2 d\xi\leq C\ln^{-\frac 1 2}(e+t).
	\end{align}
	Combining \eqref{3ineq2} and \eqref{3ineq7}, we have
	\begin{align}\label{3ineq8}
	\frac d {dt} [f(t)E_0(t)]\leq Cf'(t)\ln^{-\frac 1 2}(e+t).
	\end{align}
	Consequently, we get the initial time decay rate:
	\begin{align}\label{3ineq9}
	E_0(t)\leq C\ln^{-\frac 1 2}(e+t).
	\end{align}
	By virtue of the bootstrap argument, for any $l\in N^{+}$, we infer that $E_0\leq C_{l}\ln^{-l}(e+t).$
\end{proof}	
We now consider the the initial time decay rate of $E_1(t)$.
\begin{prop}\label{prop2}
	Let $(u_0,\tau_0)\in \dot{B}^{-1}_{2,\infty}$. Under the condition in Theorem \ref{th1},
	for any $l\in N^{+}$, then there exists a constant $C_l$ such that
	\begin{align}\label{decay2}
	E_1(t)\leq C_{l}(1+t)^{-1}\ln^{-l}(e+t).
	\end{align}
\end{prop}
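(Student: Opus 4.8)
\textbf{Proof proposal for Proposition \ref{prop2}.}

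The plan is to mimic the Fourier-splitting argument of Proposition \ref{prop1}, but now applied to the first-order energy $E_1(t)$ and with a different choice of the splitting radius, and then to use the already-established logarithmic decay of $E_0$ to control the low-frequency contribution. First I would start from the global energy inequality at the level of first derivatives, i.e. from \eqref{estimate} with $\beta=1$ applied to $\Lambda\cdot$, which gives
\begin{align*}
\frac{d}{dt}E_1(t)+D_1(t)\leq 0.
\end{align*}
Next, following the same device as before, I would multiply by a weight $g(t)=(1+t)\ln^{m}(e+t)$ (with $m$ to be fixed by a bootstrap), set $S_1(t)=\{\xi:|\xi|^2\leq C\,g'(t)/g(t)\}$, and split $D_1$ into its low- and high-frequency parts. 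Since $g'(t)/g(t)\sim (1+t)^{-1}$, the high-frequency part dominates $C\tfrac{g'(t)}{g(t)}E_1$, so one is left with
\begin{align*}
\frac{d}{dt}\big[g(t)E_1(t)\big]\leq C g'(t)\int_{S_1(t)}\big(|\widehat{\nabla u}|^2+|\widehat{\nabla\tau}|^2\big)\,d\xi
\leq C g'(t)\Big(\sup_{\xi\in S_1(t)}|\xi|^2\Big)\int_{S_1(t)}\big(|\hat u|^2+|\hat\tau|^2\big)\,d\xi.
\end{align*}
The factor $\sup_{\xi\in S_1(t)}|\xi|^2\lesssim g'(t)/g(t)\lesssim (1+t)^{-1}$ is exactly what upgrades the decay by one power of $(1+t)$ relative to Proposition \ref{prop1}.

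The heart of the matter is then to bound $\int_{S_1(t)}(|\hat u|^2+|\hat\tau|^2)\,d\xi$. For this I would reuse \eqref{3ineq3}, integrate over $S_1(t)$, and split into the data term and the nonlinear term exactly as in \eqref{3ineq4}--\eqref{3ineq6}. The data term is controlled by $\frac{g'(t)}{g(t)}\|(u_0,\tau_0)\|^2_{\dot B^{-1}_{2,\infty}}\lesssim (1+t)^{-1}$ via Lemma \ref{Lemma0}, just as in \eqref{3ineq5}. For the nonlinear term $\int_{S_1(t)}\int_0^t(|\hat F\cdot\bar{\hat u}|+|\hat G\cdot\bar{\hat\tau}|)\,ds'd\xi$, I would use $|S_1(t)|\lesssim g'(t)/g(t)$ together with the Hausdorff--Young/Minkowski bound $\|\hat F\|_{L^\infty_\xi}\lesssim\|F\|_{L^1_x}$, etc., to get a factor $\tfrac{g'(t)}{g(t)}\int_0^t(\|F\|_{L^1}+\|G\|_{L^1})\|(\hat u,\hat\tau)\|_{L^\infty_\xi}\,ds'$; bounding $\|F\|_{L^1}\lesssim\|u\|_{L^2}\|\nabla u\|_{L^2}$, $\|G\|_{L^1}\lesssim\|u\|_{L^2}\|\nabla\tau\|_{L^2}+\|\nabla u\|_{L^2}\|\tau\|_{L^2}$ and $\|(\hat u,\hat\tau)\|_{L^\infty_\xi}\lesssim\|(u,\tau)\|_{L^2}$, and then inserting the decay $\|(u,\tau)\|_{L^2}\lesssim E_0^{1/2}\lesssim\ln^{-l}(e+t)$ from Proposition \ref{prop1} plus $\int_0^t D_0\,ds'\lesssim E_0(0)<\infty$, one obtains after Cauchy--Schwarz in time a bound of the form $\tfrac{g'(t)}{g(t)}\cdot\big(\text{polynomially bounded in }t\big)\cdot\ln^{-2l}(e+t)$, hence $\lesssim (1+t)^{-1}\ln^{-2l+C}(e+t)$ for a suitable controlled loss; the logarithmic gain from $E_0$ is what keeps the time integral convergent.

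Collecting these estimates yields
\begin{align*}
\frac{d}{dt}\big[g(t)E_1(t)\big]\leq C g'(t)(1+t)^{-1}\big(\text{(slowly growing)}\times\ln^{-2l}(e+t)\big),
\end{align*}
and since $g'(t)(1+t)^{-1}\sim (1+t)^{-2}\ln^{m}(e+t)$ is integrable, integrating in time and dividing by $g(t)=(1+t)\ln^m(e+t)$ gives $E_1(t)\lesssim (1+t)^{-1}\ln^{-m'}(e+t)$ for some $m'$; a bootstrap over $l$ and $m$ (as at the end of Proposition \ref{prop1}) then produces the claimed $E_1(t)\leq C_l(1+t)^{-1}\ln^{-l}(e+t)$ for every $l\in\mathbb{N}^+$. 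The main obstacle I anticipate is the careful bookkeeping of the nonlinear term: one must check that the $L^1_x$ norms of $F$ and $G$ really can be distributed as $L^2\times L^2$ so that each factor is controlled by $E_0^{1/2}$ and $D_0^{1/2}$, that the resulting time integral $\int_0^t E_0^{1/2}D_0^{1/2}\,ds'$ (or its variants) converges with enough room to absorb the polynomial weight coming from $g'(t)/g(t)$, and that the quadratic nature of the estimate (two copies of the solution) does not destroy the logarithmic gain — this is precisely where Proposition \ref{prop1}, giving an \emph{arbitrary} power of $\ln^{-1}$, is essential.
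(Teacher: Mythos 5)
Your overall strategy is the right one and is essentially the paper's: derive $\frac{d}{dt}E_1+D_1\le 0$ (as in \eqref{3ineq10}--\eqref{3ineq15}), apply the Fourier splitting so that the high frequencies are absorbed by $D_1$, and exploit the extra factor $\sup_{\xi\in S_1(t)}|\xi|^2\lesssim (1+t)^{-1}$ on the low-frequency set together with the arbitrary logarithmic decay from Proposition \ref{prop1}. Where you diverge is the treatment of the remaining low-frequency integral: the paper does \emph{not} re-run the Duhamel estimate at this stage, but simply bounds $\int_{S_0(t)}|\xi|^2(|\hat u|^2+|\hat\tau|^2)\,d\xi\le \big(\sup_{S_0(t)}|\xi|^2\big)\,\|(u,\tau)\|^2_{L^2}\lesssim (1+t)^{-1}\ln^{-l}(e+t)$ using Proposition \ref{prop1} directly (this is \eqref{3ineq17}), which makes the whole proof a few lines. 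Your detour through the $\dot B^{-1}_{2,\infty}$ data term and the nonlinear Duhamel term can be made to close, but as written it contains a genuine misstep: the inequality $\|(\hat u,\hat\tau)\|_{L^\infty_\xi}\lesssim\|(u,\tau)\|_{L^2}$ is false (the Fourier transform maps $L^1$ to $L^\infty$ and $L^2$ to $L^2$, not $L^2$ to $L^\infty$). The correct substitute is the paper's own device in \eqref{3ineq6}: estimate $\int_{S_1(t)}|\hat F||\hat u|\,d\xi\le\|\hat F\|_{L^\infty_\xi}|S_1(t)|^{1/2}\|u\|_{L^2}$ by Cauchy--Schwarz over $S_1(t)$, which yields the factor $(1+t)^{-1/2}$ and, combined with $\|(u,\tau)\|^2_{L^2}\lesssim\ln^{-l'}(e+t)$ and $\int_0^t D_0\,ds'<\infty$, gives $\int_{S_1(t)}(|\hat u|^2+|\hat\tau|^2)\,d\xi\lesssim\ln^{-l'}(e+t)$ --- i.e.\ exactly what Proposition \ref{prop1} already hands you for free. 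With that repair your bookkeeping closes: the right-hand side becomes $\lesssim (1+t)^{-1}\ln^{m-l'}(e+t)$, integrable once $l'>m+1$, and dividing by $g(t)=(1+t)\ln^m(e+t)$ gives the claim. One genuine merit of your version is the explicit weight $g(t)=(1+t)\ln^m(e+t)$ with splitting radius $|\xi|^2\sim(1+t)^{-1}$, which makes the passage to the rate $(1+t)^{-1}\ln^{-l}(e+t)$ more transparent than the paper's terse jump from \eqref{3ineq16}--\eqref{3ineq17} to \eqref{3ineq18}, where the weight is still written as $f(t)=\ln^3(e+t)$.
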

\begin{proof}	
	Since $\langle u\cdot\nabla u,\Delta u\rangle=0$ with $d=2$,  then we have
	\begin{align}\label{3ineq10}
	\frac 1 2 \frac d {dt} \|\nabla(u,\tau)\|^2_{L^2}+ \|\nabla^2\tau\|^2_{L^2}=\langle u\cdot\nabla \tau+Q(u,\tau),\Delta \tau\rangle\leq C\delta D_1 ,
	\end{align}
	and
	\begin{align}\label{3ineq11}
	\frac d {dt} \langle \Delta\tau, \nabla u\rangle+ \frac 1 2\|\nabla^2 u\|^2_{L^2}
	&=\langle div~(u\cdot\nabla \tau+Q(u,\tau)-\Delta \tau),\Delta u\rangle \\ \notag
	&-\langle \nabla\mathbb{P}(u\cdot\nabla u-div~\tau),\Delta \tau\rangle  \\ \notag
	&\leq C\delta D_1+C\|\nabla^2 \tau\|^2_{L^2}.
	\end{align}
	Using Lemma \ref{Lemma2}, we obtain
	\begin{align}\label{3ineq12}
	\frac 1 2\frac d {dt} \|\Lambda^s(u,\tau)\|^2_{L^2}+ \|\nabla\Lambda^s\tau\|^2_{L^2}&=-\langle [\Lambda^s,u\cdot\nabla] u,\Lambda^s u\rangle-\langle \Lambda^s(u\cdot\nabla \tau+Q(u,\tau)),\Lambda^s \tau\rangle \\ \notag
	&\leq C(\|\nabla u\|_{L^\infty}\|\Lambda^s u\|^2_{L^2}+\|\tau\|_{L^\infty}\|\Lambda^s u\|_{L^2}\|\Lambda^{s+1} \tau\|_{L^2}  \\ \notag
	&+\|u\|_{L^\infty}\|\Lambda^s \tau\|_{L^2}\|\Lambda^{s+1} \tau\|_{L^2}+\|\nabla u\|_{L^\infty}\|\Lambda^{s-1} \tau\|_{L^2}\|\Lambda^{s+1} \tau\|_{L^2}).
	\end{align}
	By Lemma \ref{Lemma1}, we have $\|\nabla u\|_{L^\infty}\|\Lambda^{s-1} \tau\|_{L^2}\leq C\| u\|^{\frac {s-2} s}_{L^2}\|\nabla^s u\|^{\frac {2} s}_{L^2}\|\tau\|^{\frac {1} s}_{L^2}\|\Lambda^{s} \tau\|^{\frac {s-1} s}_{L^2}$. This together with \eqref{3ineq12} and Theorem \ref{th1} ensure that
	\begin{align}\label{3ineq13}
	\frac d {dt} \|\Lambda^s(u,\tau)\|^2_{L^2}+ \|\nabla\Lambda^s\tau\|^2_{L^2}\leq C\delta D_1.
	\end{align}
	Using Lemmas \ref{Lemma1} and \ref{Lemma2}, we deduce that
	\begin{align}\label{3ineq14}
			&\frac d {dt} \langle \Lambda^{s-1}\tau, -\nabla\Lambda^{s-1} u\rangle+ \frac 1 2\|\nabla\Lambda^{s-1} u\|^2_{L^2}  \\ \notag
			&=\langle \Lambda^{s-1}(u\cdot\nabla \tau+Q(u,\tau)-\Delta \tau),\nabla\Lambda^{s-1} u\rangle \\ \notag
			&-\langle \Lambda^{s-1}\mathbb{P}(u\cdot\nabla u-div~ \tau),div~\Lambda^{s-1} \tau\rangle  \\ \notag
			&\leq C\|\Lambda^s u\|_{L^2}(\|u\|_{L^\infty}\|\Lambda^s \tau\|_{L^2}+\|\tau\|_{L^\infty}\|\Lambda^s u\|_{L^2}+\|\Lambda^{s+1} \tau\|_{L^2})  \\ \notag
			&+ C\|\Lambda^s \tau\|_{L^2}(\|u\|_{L^\infty}\|\Lambda^s u\|_{L^2}+\|\Lambda^{s} \tau\|_{L^2})	\\ \notag
			&\leq C\delta D_1+C\|\Lambda^s \tau\|^2_{H^1}.
		\end{align}	
	Combining \eqref{3ineq10}-\eqref{3ineq14}, we obtain
	\begin{align}\label{3ineq15}
	\frac d {dt} E_1+D_1\leq 0,
	\end{align}
	which implies that
	\begin{align}\label{3ineq16}
	\frac d {dt} [f(t)E_1]+C_2 f'(t)(\frac k 2\|\nabla u\|^2_{H^{s-1}}+\|\nabla \tau\|^2_{H^{s-1}})
	\leq Cf'(t)\int_{S_0(t)}|\xi|^2(|\hat{u}(\xi)|^2+|\hat{\tau}(\xi)|^2) d\xi.
	\end{align}
	By Proposition \ref{prop1}, we obtain
	\begin{align}\label{3ineq17}
	f'(t)\int_{S_0(t)}|\xi|^2(|\hat{u}(\xi)|^2+|\hat{\tau}(\xi)|^2) d\xi\leq C (1+t)^{-2}\ln^{-l+1}(e+t).
	\end{align}
	According to \eqref{3ineq16}-\eqref{3ineq17}, we infer that
	\begin{align}\label{3ineq18}
	E_1\leq C (1+t)^{-1}\ln^{-l}(e+t).
	\end{align}
	Therefore, we complete the proof of Proposition \ref{prop2}.
\end{proof}

By virtue of the standard method, we can't immediately obtain the optimal decay rate. However, we obtain a weak result as follows.
\begin{prop}\label{prop3}
	Assume that $(u_0,\tau_0)$ satisfy the condition in Proposition \ref{prop1}, there holds
	\begin{align}\label{decay3}
	E_0(t)\leq C(1+t)^{-\frac 1 2},
	\end{align}
	and
	\begin{align}\label{decay4}
	E_1(t)\leq C(1+t)^{-\frac 3 2}.
	\end{align}
\end{prop}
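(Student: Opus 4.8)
The plan is to bootstrap the logarithmic decay of Propositions~\ref{prop1}--\ref{prop2} into the algebraic rates \eqref{decay3}--\eqref{decay4}, by running the Fourier splitting method with a \emph{polynomial} frequency cutoff and by feeding the logarithmic bounds into a time‑weighted energy estimate. For \eqref{decay3} I would introduce $H(t):=\sup_{0\le s'\le t}(1+s')^{1/2}E_0(s')$ (finite by continuity, with $H(0)=E_0(0)\lesssim\delta^2$) and aim at a closed inequality $H(t)\lesssim 1+\varepsilon H(t)+H(t)^{3/2}$. Starting from $\frac{d}{dt}E_0+D_0\le0$ and choosing the ball $S(t)=\{\xi:|\xi|^2\le \tfrac{2C_2N}{1+t}\}$ with $N$ large, the computation of \eqref{3ineq2} with $f(t)=(1+t)^{N}$ in place of $\ln^{3}(e+t)$ yields $\frac{d}{dt}\big[(1+t)^{N}E_0\big]\lesssim N(1+t)^{N-1}\big(\int_{S(t)}(|\hat u_0|^2+|\hat\tau_0|^2)\,d\xi+I(t)\big)$, where $I(t)=\int_{S(t)}\int_0^t(|\hat F\cdot\bar{\hat u}|+|\hat G\cdot\bar{\hat\tau}|)\,ds'\,d\xi$ records the low‑frequency effect of $F=-u\cdot\nabla u$ and $G=-u\cdot\nabla\tau-Q(\nabla u,\tau)$ through \eqref{eq3}. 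Since $(u_0,\tau_0)\in\dot B^{-1}_{2,\infty}$, the Littlewood--Paley argument of \eqref{3ineq5} gives $\int_{S(t)}(|\hat u_0|^2+|\hat\tau_0|^2)\,d\xi\lesssim(1+t)^{-1}$, so after integrating in time everything reduces to estimating $\int_0^t(1+s')^{N-1}I(s')\,ds'$.

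For $I(t)$ I would exploit the structure of the nonlinearities. Since $F$ and $u\cdot\nabla\tau$ are in divergence form, $\hat F$ and $\widehat{u\cdot\nabla\tau}$ carry a factor $|\xi|$ on $S(t)$; using $|S(t)|\lesssim(1+t)^{-1}$, $\int_{S(t)}|\xi|^2d\xi\lesssim(1+t)^{-2}$ and $\|u\otimes u\|_{L^1}\le\|u\|_{L^2}^2$, these terms contribute $\lesssim(1+t)^{-1}\int_0^t(\|u\|_{L^2}^2+\|\tau\|_{L^2}^2)(\|u\|_{L^2}+\|\tau\|_{L^2})\,ds'$, which under the bootstrap hypothesis $E_0\le H(1+s')^{-1/2}$ is $\lesssim H^{3/2}(1+t)^{-3/4}$. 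The rotation/deformation term $Q(\nabla u,\tau)$ is not a gradient, so one only has $|\widehat{Q(\nabla u,\tau)}(\xi)|\le\|Q(\nabla u,\tau)\|_{L^1}\lesssim\|\nabla u\|_{L^2}\|\tau\|_{L^2}$, whence $\int_{S(t)}\int_0^t|\widehat{Q(\nabla u,\tau)}\cdot\bar{\hat\tau}|\,ds'd\xi\lesssim(1+t)^{-1/2}\int_0^t\|\tau\|_{L^2}^2\|\nabla u\|_{L^2}\,ds'$, the estimate highlighted in the introduction. Into this last integral I would insert the bootstrap bound $\|\tau\|_{L^2}^2\lesssim E_0\le H(1+s')^{-1/2}$ together with the \emph{logarithmic‑gain} bound $\|\nabla u\|_{L^2}^2\lesssim E_1\le C_l(1+s')^{-1}\ln^{-l}(e+s')$ from Proposition~\ref{prop2}: the integrand becomes $\lesssim HC_l^{1/2}(1+s')^{-1}\ln^{-l/2}(e+s')$, whose improper integral over $[0,\infty)$ converges as soon as $l>2$, so $\int_0^t\|\tau\|_{L^2}^2\|\nabla u\|_{L^2}\,ds'\lesssim HC_l^{1/2}$ and the $Q$‑contribution to $I(t)$ is $\lesssim HC_l^{1/2}(1+t)^{-1/2}$.

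Plugging these bounds back and integrating gives $E_0(t)\lesssim(1+t)^{-1}+H^{3/2}(1+t)^{-3/4}+C_l^{1/2}H(1+t)^{-1/2}$; multiplying by $(1+t)^{1/2}$ and taking the supremum yields a bootstrap inequality of the form $H(t)\lesssim 1+C_l^{1/2}H(t)+H(t)^{3/2}$, which closes — giving $H(t)\lesssim1$, that is \eqref{decay3} — once the constants are arranged so that the coefficients of $H(t)$ and $H(t)^{3/2}$ are below $\tfrac12$; this is where the smallness $E_0(0)\lesssim\delta^2$ and $\int_0^\infty D_0\,ds'\le E_0(0)$ enter, providing the small factors (the dissipation integral appears whenever $\|\nabla u\|_{L^2}$ is estimated in $L^2_tL^2_x$). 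Finally, \eqref{decay4} follows by the same scheme applied to $\frac{d}{dt}E_1+D_1\le0$ (this is \eqref{3ineq15}): the Fourier splitting now produces an extra $|\xi|^2\lesssim(1+t)^{-1}$ in the low‑frequency integral, and feeding in the just‑proved rate \eqref{decay3} together with a time‑weighted bound for $D_1$ gives $E_1\lesssim(1+t)^{-3/2}$.

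The main obstacle is precisely the non‑divergence term $Q(\nabla u,\tau)$: it gains no factor $|\xi|$ at low frequency, so a crude estimate of $\int_0^t\|\tau\|_{L^2}^2\|\nabla u\|_{L^2}\,ds'$ using only the logarithmic rates — which do not control $E_0$ by any negative power of $1+t$ — produces a growth of order $(1+t)^{1/2}\ln^{-l}(e+t)$, and the $(1+t)^{-1/2}$ prefactor then only buys back a (slightly improved) logarithmic bound, never an algebraic one. The algebraic improvement becomes possible only because the \emph{combination} of the bootstrap bound $E_0\lesssim(1+s')^{-1/2}$ with the logarithmic gain already present in the $E_1$‑estimate of Proposition~\ref{prop2} turns the borderline integral $\int_0^\infty(1+s')^{-1}\ln^{-l/2}(e+s')\,ds'$ into a convergent one; getting this balance right is the crux, and it is also why the resulting rate $(1+t)^{-1/2}$ falls short of optimality and the finer Littlewood--Paley argument of this section is still needed.
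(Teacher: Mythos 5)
Your overall strategy is the paper's: Fourier splitting over the ball $|\xi|^2\lesssim(1+t)^{-1}$, the $\dot{B}^{-1}_{2,\infty}$ bound for the low-frequency data, $L^1$-type bounds for $F,G$, the weighted quantity $H(t)=\sup_{0\le s'\le t}(1+s')^{1/2}E_0(s')$, and the crucial use of the logarithmic decay of $E_1$ from Proposition \ref{prop2} to make the borderline time integral convergent. The genuine gap is in how you close the bootstrap. Because you pull the supremum out of the time integral, you end with $H(t)\lesssim 1+C\,C_l^{1/2}H(t)+C\,H(t)^{3/2}$ and propose to ``arrange the coefficients below $\tfrac12$.'' This does not close: (i) the coefficient of the linear term is (a universal constant times) $C_l^{1/2}\int_0^\infty(1+s')^{-1}\ln^{-l/2}(e+s')\,ds'$, and nothing guarantees this is $<\tfrac12$, since $C_l$ comes from Propositions \ref{prop1}--\ref{prop2} and is not controlled (it may grow with $l$); (ii) an inequality of the form $H\le C_1+\tfrac12H+\tfrac12H^{3/2}$ is satisfied by every sufficiently large $H$, so by itself it yields no bound at all — one needs a continuity argument plus a smallness relation between the coefficient of $H^{3/2}$ and the constant $C_1$, and here $C_1$ is proportional to $\|(u_0,\tau_0)\|^2_{\dot{B}^{-1}_{2,\infty}}$, which this paper precisely does not assume small, while $\delta$ in Theorem \ref{th1} is fixed independently of it; the smallness you invoke ($E_0(0)\lesssim\delta^2$, $\int_0^\infty D_0\,ds'\le E_0(0)$) never enters your displayed estimates in a way that produces that relation.

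The repair is exactly the paper's closure. Keep $N(s')=(1+s')^{1/2}E_0(s')$ \emph{inside} the time integral, and estimate every nonlinear contribution (including $u\cdot\nabla u$ and $u\cdot\nabla\tau$ — the divergence-form $|\xi|$-gain is unnecessary and is what creates your cubic power) via $\|F\|_{L^1}+\|G\|_{L^1}\lesssim(\|u\|_{L^2}+\|\tau\|_{L^2})(\|\nabla u\|_{L^2}+\|\nabla\tau\|_{L^2})$ together with $\int_{S(t)}(|\hat u|+|\hat\tau|)\,d\xi\lesssim(1+t)^{-1/2}(\|u\|_{L^2}+\|\tau\|_{L^2})$, as in \eqref{3ineq21}. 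Each term is then linear in $E_0(s')$, i.e.\ linear in $N(s')$, with kernel $(1+s')^{-1/2}(\|\nabla u\|_{L^2}+\|\nabla\tau\|_{L^2})\lesssim C_l^{1/2}(1+s')^{-1}\ln^{-l/2}(e+s')$, which is integrable by Proposition \ref{prop2} for $l$ large; Gronwall applied to \eqref{3ineq24} then gives $N(t)\le C$ with no smallness of any coefficient required. (If you prefer to keep your cubic estimate, linearize it by $\|(u,\tau)\|_{L^2}\lesssim\delta$; but the essential point is that the supremum must stay inside the integral so that Gronwall, not absorption, closes the argument.) With \eqref{decay3} in hand, your sketch for \eqref{decay4} coincides with the paper's and goes through.
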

\begin{proof}
	Define $S(t)=\{\xi:|\xi|^2\leq C_2(1+t)^{-1}\}$ with $C_2$ large enough.
	By \eqref{3ineq1}, we infer that
	\begin{align}\label{3ineq19}
	\frac d {dt} E_0(t)+\frac {kC_2} {2(1+t)}\| u\|^2_{H^s}+\frac {C_2} {1+t}\|\tau\|^2_{H^{s}}
	\leq \frac {C} {1+t}\int_{S(t)}|\hat{u}(\xi)|^2+|\hat{\tau}(\xi)|^2 d\xi.
	\end{align}
	Integrating \eqref{3ineq3} over $S(t)$ with $\xi$, then we get
	\begin{align}\label{3ineq20}
	\int_{S(t)}|\hat{u}|^2+|\hat{\tau}|^2d\xi
	\leq C\int_{S(t)} |\hat{u}_0|^2+|\hat{\tau}_0|^2d\xi  +C\int_{S(t)}\int_{0}^{t}|\hat{F}\cdot\bar{\hat{u}}|+|\hat{G}\cdot\bar{\hat{\tau}}|ds'd\xi.
	\end{align}
	According to $(u_0,\tau_0)\in H^s\cap\dot{B}^{-1}_{2,\infty}$ and applying Lemma \ref{Lemma0}, we deduce that
	\begin{align}
	\int_{S(t)}|\hat{u}_0|^2+|\hat{\tau}_0|^2 d\xi
	&\leq\sum_{j\leq \log_2[\frac {4} {3}C_2^{\frac 1 2 }(1+t)^{-\frac 1 2}]}\int_{\mathbb{R}^{2}} 2\varphi^2(2^{-j}\xi)(|\hat{u}_0|^2+|\hat{\tau}_0|^2)d\xi \\ \notag
	&\leq\sum_{j\leq \log_2[\frac {4} {3}C_2^{\frac 1 2 }(1+t)^{-\frac 1 2}]}2(\|\dot{\Delta}_j u_0\|^2_{L^2}+\|\dot{\Delta}_j \tau_0\|^2_{L^2}) \\ \notag
	&\leq C(1+t)^{-1}\|(u_0,\tau_0)\|^2_{\dot{B}^{-1}_{2,\infty}},
	\end{align}
    and
	\begin{align}\label{3ineq21}
	\int_{S(t)}\int_{0}^{t}|\hat{F}\cdot\bar{\hat{u}}|+|\hat{G}\cdot\bar{\hat{\tau}}|ds'd\xi\leq C(1+t)^{-\frac 1 2} \int_{0}^{t}(\|u\|^2_{L^{2}}+\|\tau\|^2_{L^{2}})(\|\nabla u\|_{L^{2}}+\|\nabla \tau\|_{L^{2}})ds'.
	\end{align}
	Combining \eqref{3ineq19}-\eqref{3ineq21}, we have
	\begin{align}\label{3ineq22}
	&\frac d {dt} E_0(t)+\frac {kC_2} {2(1+t)}\| u\|^2_{H^s}+\frac {C_2} {1+t}\|\tau\|^2_{H^{s}} \\ \notag
	&\leq \frac {C} {1+t}[(1+t)^{-1}+(1+t)^{-\frac 1 2} \int_{0}^{t}(\|u\|^2_{L^{2}}+\|\tau\|^2_{L^{2}})(\|\nabla u\|_{L^{2}}+\|\nabla \tau\|_{L^{2}})ds'],
	\end{align}
	which implies that
	\begin{align}\label{3ineq23}
	(1+t)^{\frac 3 2}E_0(t)\leq C(1+t)^{\frac 1 2}
	+C(1+t)\int_{0}^{t}(\|u\|^2_{L^{2}}+\|\tau\|^2_{L^{2}})(\|\nabla u\|_{L^{2}}+\|\nabla \tau\|_{L^{2}})ds'.
	\end{align}
	Let $N(t)=\sup_{0\leq s\leq t}(1+s)^{\frac 1 2}E_0(s)$. By \eqref{3ineq23}, we obtain
	\begin{align}\label{3ineq24}
	N(t)\leq C+C\int_{0}^{t}N(s)(1+s)^{-\frac 1 2}(\|\nabla u\|_{L^{2}}+\|\nabla\tau\|_{L^{2}})ds'.
	\end{align}
	Applying Gronwall's inequality and Proposition \ref{prop2}, we deduce that $N(t)\leq C$,
	which implies that
	\begin{align}\label{3ineq25}
	E_0\leq C(1+t)^{-\frac 1 2}.
	\end{align}
	According to \eqref{3ineq15}, we infer that
	\begin{align}\label{3ineq26}
	&\frac d {dt} E_1+\frac { C_2} {1+t}(\frac k 2\|\nabla u\|^2_{H^{s-1}}+\|\nabla \tau\|^2_{H^{s-1}})
	\leq \frac {C} {1+t}\int_{S(t)}|\xi|^2(|\hat{u}(\xi)|^2+|\hat{\tau}(\xi)|^2) d\xi.
	\end{align}
	By \eqref{3ineq25}, we get
	\begin{align}\label{3ineq27}
	\frac {C} {1+t}\int_{S(t)}|\xi|^2(|\hat{u}(\xi)|^2+|\hat{\tau}(\xi)|^2) d\xi\leq C (1+t)^{-2}(\|u\|^2_{L^2}+\|\tau\|^2_{L^2})\leq C (1+t)^{-\frac 5 2}.
	\end{align}
	This together with \eqref{3ineq1}, \eqref{3ineq25} and \eqref{3ineq26} ensure that
	\begin{align}\label{3ineq28}
	E_1\leq C(1+t)^{-\frac 3 2}.
	\end{align}
	We thus complete the proof of Proposition \ref{prop3}.
\end{proof}

Using Proposition \ref{prop3}, we can prove that the solution of \eqref{eq1} belongs to
some Besov space with negative index.
\begin{lemm}\label{Lemma3}
	Let $0<\alpha,\sigma\leq 1$ and $\sigma<2\alpha$. Under the condition in Proposition \ref{prop1}. If
	\begin{align}\label{de0}
	E_0(t)\leq C(1+t)^{-\alpha},~~~~E_1(t)\leq C(1+t)^{-\alpha-1},
	\end{align}
	then we have
	\begin{align}\label{3ineq29}
	(u,\tau)\in L^{\infty}(0,\infty;\dot{B}^{-\sigma}_{2,\infty}).
	\end{align}
\end{lemm}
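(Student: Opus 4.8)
The plan is to run the Duhamel / mild formulation of \eqref{eq1} and control each dyadic block $\dot\Delta_j(u,\tau)$ in $L^2$, uniformly in time, with a bound $2^{j\sigma}\|\dot\Delta_j(u,\tau)\|_{L^2}\lesssim 1$. Write \eqref{eq1} schematically as $\partial_t u = \mathbb{P}\,\mathrm{div}\,\tau + F$ and $\partial_t\tau + (-\Delta)\tau = D(u) + G$ with $F = -u\cdot\nabla u$ and $G = -u\cdot\nabla\tau - Q(\nabla u,\tau)$. The stress equation has genuine parabolic smoothing, so the semigroup $e^{t\Delta}$ supplies a factor $e^{-ct2^{2j}}$ on the $j$-th block of $\tau$; the velocity equation has no smoothing but is forced by $\mathrm{div}\,\tau$, which on low frequencies is harmless. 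So the strategy is: fix $j\in\mathbb Z$, apply $\dot\Delta_j$, take the $L^2$ inner product, and estimate

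First I would handle the \emph{low frequencies} $j\le 0$. Here $\dot B^{-\sigma}_{2,\infty}$ membership is essentially a statement about the size of $\widehat{(u,\tau)}$ near $\xi=0$. Integrating the pointwise identity \eqref{3ineq3} over the annulus $\{|\xi|\sim 2^j\}$ gives
\begin{align*}
\|\dot\Delta_j(u,\tau)(t)\|_{L^2}^2 \lesssim \|\dot\Delta_j(u_0,\tau_0)\|_{L^2}^2 + \int_0^t \int_{|\xi|\sim 2^j}\big(|\hat F\cdot\bar{\hat u}| + |\hat G\cdot\bar{\hat\tau}|\big)\,d\xi\,ds'.
\end{align*}
The first term is $\lesssim 2^{-2j\sigma}\|(u_0,\tau_0)\|_{\dot B^{-\sigma}_{2,\infty}}^2$ by hypothesis (recall $\dot B^{-1}_{2,\infty}\hookrightarrow \dot B^{-\sigma}_{2,\infty}$ is false in general, so here I would instead note $(u_0,\tau_0)\in H^s\cap\dot B^{-1}_{2,\infty}$ forces $(u_0,\tau_0)\in\dot B^{-\sigma}_{2,\infty}$ by interpolating the two endpoints across $\sigma\in(0,1]$). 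For the nonlinear term, since on $|\xi|\sim 2^j$ we have $|\hat F(\xi)|\le \|F\|_{L^1}$ and similarly for $G$, the $\xi$-integral over the annulus costs a factor $2^{2j}$, so the whole time integral is bounded by $2^{2j}\int_0^t(\|F\|_{L^1} + \|G\|_{L^1})\,\|(\hat u,\hat\tau)\|_{L^\infty_\xi}\,ds'$. Using $\|F\|_{L^1}\lesssim \|u\|_{L^2}\|\nabla u\|_{L^2}$, $\|G\|_{L^1}\lesssim \|u\|_{L^2}\|\nabla\tau\|_{L^2} + \|\nabla u\|_{L^2}\|\tau\|_{L^2}$, and the decay \eqref{de0} — which gives $\|(u,\tau)\|_{L^2}^2\lesssim (1+t)^{-\alpha}$ and $\|\nabla(u,\tau)\|_{L^2}^2\lesssim (1+t)^{-\alpha-1}$ — the integrand decays like $(1+s)^{-\alpha-1/2}$, whose time integral is bounded (indeed $\alpha>1/2$ is not even needed for convergence since $\alpha>0$... but I would keep the condition $\sigma<2\alpha$ in reserve; it is what guarantees the $2^{2j}$ growth is beaten by $2^{-2j\sigma}$ after summing, i.e. $2^{2j}\lesssim 2^{-2j\sigma}$ fails for $j\to-\infty$, so one must instead truncate the time integral at the frequency-dependent scale $t\sim 2^{-2j}$ and use the decay there — this dyadic-in-time bookkeeping is the real content). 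Concretely, split $\int_0^t = \int_0^{2^{-2j}} + \int_{2^{-2j}}^t$; on the first piece bound $\|(\hat u,\hat\tau)\|_{L^\infty_\xi}\le \|(u,\tau)\|_{L^1}$ crudely or better by the already-known $\dot B^{-1}$ info, on the second piece the parabolic factor $e^{-cs2^{2j}}$ (present for $\tau$) and the decay of the velocity part do the job; the outcome is $\sup_t 2^{j\sigma}\|\dot\Delta_j(u,\tau)(t)\|_{L^2}\lesssim 1$ for $j\le 0$.

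Next, the \emph{high frequencies} $j>0$ are the easy range: $\dot B^{-\sigma}_{2,\infty}$ with $\sigma>0$ is \emph{weaker} than $L^2$ there, and $2^{j\sigma}\|\dot\Delta_j(u,\tau)\|_{L^2}\le \|\Lambda^\sigma(u,\tau)\|_{L^2}\le \|(u,\tau)\|_{H^s}$ since $\sigma\le 1\le s$; this is uniformly bounded by Theorem \ref{th1} (the energy $\|(u,\tau)\|_{H^s}$ is non-increasing up to the coupling correction, hence stays $\lesssim\delta$). Combining the two ranges gives $\|(u,\tau)(t)\|_{\dot B^{-\sigma}_{2,\infty}} = \sup_{j\in\mathbb Z} 2^{j\sigma}\|\dot\Delta_j(u,\tau)(t)\|_{L^2}\lesssim 1$ uniformly in $t$, which is \eqref{3ineq29}. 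The main obstacle, as flagged, is the low-frequency nonlinear estimate: one cannot naively put the full time integral against the crude annulus bound (that produces a divergent $2^{2j}\cdot\infty$), so the proof hinges on matching the time-integration window to the dyadic scale $2^{-2j}$ and exploiting both the parabolic decay of the $\tau$-block and the polynomial decay \eqref{de0} of the low-order energy precisely on that window — and it is exactly there that the hypothesis $\sigma<2\alpha$ is consumed.
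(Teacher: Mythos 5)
There is a genuine gap, and it sits exactly where you yourself flag ``the real content.'' Your low-frequency estimate is never actually carried out: the bound $\int_{|\xi|\sim 2^j}|\hat F\cdot\bar{\hat u}|\,d\xi\lesssim 2^{2j}\|F\|_{L^1}\|(\hat u,\hat\tau)\|_{L^\infty_\xi}$ needs $\|(\hat u,\hat\tau)\|_{L^\infty_\xi}$, i.e.\ an $L^1_x$ bound on the solution, which is neither assumed nor propagated (avoiding it is the whole point of working in $\dot B^{-1}_{2,\infty}$); the ``already-known $\dot B^{-1}$ info'' for the solution at positive times is precisely what this lemma (after bootstrapping to $\sigma=1$) is meant to produce, and in any case a $\dot B^{-1}_{2,\infty}$ bound does not control $\hat u$ pointwise. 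The claimed parabolic factor $e^{-cs2^{2j}}$ is also not available in the form you use it: only $\tau$ is dissipated, $u$ has no smoothing at all, and the forced, coupled system does not hand you exponential decay on a block of $(u,\tau)$ without a spectral analysis that the paper deliberately avoids. Finally, your bookkeeping of the hypothesis $\sigma<2\alpha$ is backwards: for $j\le 0$ one has $2^{2j}\le 2^{-2j\sigma}$ trivially, so that is not where the condition is consumed; and your own crude time integrand $(1+s)^{-\alpha-\frac12}$ is \emph{not} integrable when $\alpha=\tfrac12$, which is exactly the case in which the lemma is first applied (Proposition \ref{prop4} uses $\alpha=\sigma=\tfrac12$). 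So as written the argument does not close in the regime it is needed.

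For comparison, the paper's proof is much more elementary and bypasses all of this: apply $\dot\Delta_j$ to \eqref{eq1}, do the $L^2$ energy estimate on each block, multiply by $2^{-j\sigma}$ and take $\ell^\infty_j$ to get
\begin{align*}
\frac{d}{dt}\bigl(\|u\|^2_{\dot B^{-\sigma}_{2,\infty}}+\|\tau\|^2_{\dot B^{-\sigma}_{2,\infty}}\bigr)\lesssim \|(F,G)\|_{\dot B^{-\sigma}_{2,\infty}}\,\|(u,\tau)\|_{\dot B^{-\sigma}_{2,\infty}},
\end{align*}
then control the forcing by the embedding $L^{\frac{2}{\sigma+1}}\hookrightarrow\dot B^{-\sigma}_{2,\infty}$, H\"older, and Gagliardo--Nirenberg:
$\|(F,G)\|_{\dot B^{-\sigma}_{2,\infty}}\lesssim\|(F,G)\|_{L^{2/(\sigma+1)}}\lesssim\|(u,\tau)\|^{\sigma}_{L^2}\|\nabla(u,\tau)\|^{2-\sigma}_{L^2}\lesssim(1+s)^{-(1+\alpha-\frac{\sigma}{2})}$,
which is integrable in time precisely because $\sigma<2\alpha$; a Gronwall-type absorption on $M(t)=\sup_{0\le s\le t}\|(u,\tau)\|_{\dot B^{-\sigma}_{2,\infty}}$ then gives the uniform bound. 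If you want to salvage your route, you would have to replace the $L^\infty_\xi$ and $e^{-cs2^{2j}}$ steps by estimates of this Besov/H\"older type on each dyadic block, at which point you have essentially reproduced the paper's argument.
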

\begin{proof}
	Applying $\dot{\Delta}_j$ to \eqref{eq1}, we have
	\begin{align}\label{eq4}
	\left\{
	\begin{array}{ll}
	\dot{\Delta}_j u_t+\nabla\dot{\Delta}_j P-div~\dot{\Delta}_j\tau=\dot{\Delta}_j F,  \\[1ex]
	\dot{\Delta}_j \tau_t-\Delta\dot{\Delta}_j \tau-\dot{\Delta}_j D(u)=\dot{\Delta}_j G. \\[1ex]
	\end{array}
	\right.
	\end{align}
	We first deduce from \eqref{eq4} that
	\begin{align}\label{3ineq30}
	\frac d {dt}(\|\dot{\Delta}_j u\|^2_{L^2}+\|\dot{\Delta}_j \tau\|^2_{L^2})+2\|\nabla\dot{\Delta}_j \tau\|^2_{L^2}
	\leq C(\|\dot{\Delta}_j F\|_{L^2}\|\dot{\Delta}_j u\|_{L^2}+\|\dot{\Delta}_j G\|_{L^2}\|\dot{\Delta}_j \tau\|_{L^2}).
	\end{align}
	Applying $2^{-2j\sigma}$ to \eqref{3ineq30} and taking $l^\infty$ norm, we obtain
	\begin{align}\label{3ineq31}
	\frac d {dt}(\|u\|^2_{\dot{B}^{-\sigma}_{2,\infty}}+\|\tau\|^2_{\dot{B}^{-\sigma}_{2,\infty}}) \leq C(\|F\|_{\dot{B}^{-\sigma}_{2,\infty}}\|u\|_{\dot{B}^{-\sigma}_{2,\infty}}
	+\|G\|_{\dot{B}^{-\sigma}_{2,\infty}}\|\tau\|_{\dot{B}^{-\sigma}_{2,\infty}}).
	\end{align}
	Let $M(t)=\sum_{0\leq s\leq t} \|u\|_{\dot{B}^{-\sigma}_{2,\infty}}+\|\tau\|_{\dot{B}^{-\sigma}_{2,\infty}}$. According to \eqref{3ineq31}, we infer that
	\begin{align}\label{3ineq32}
	M^2(t)&\leq CM^2(0)+CM(t)\int_0^{t}\|F\|_{\dot{B}^{-\sigma}_{2,\infty}}+\|G\|_{\dot{B}^{-\sigma}_{2,\infty}}ds'.
	\end{align}
	Using \eqref{de0} and inclusion between Lesbesgue and Besov space, we deduce that
	\begin{align}\label{3ineq33}
	\int_0^{t}\|(F,G)\|_{\dot{B}^{-\sigma}_{2,\infty}}ds'
	&\leq C\int_0^{t}\|(F,G)\|_{L^{\frac 2 {\sigma+1}}}ds'  \\ \notag
	&\leq C\int_0^{t}(\|u\|_{L^{\frac 2 {\sigma}}}+\|\tau\|_{L^{\frac 2 {\sigma}}})(\|\nabla u\|_{L^2}+\|\nabla\tau\|_{L^2})ds'   \\ \notag
	&\leq C\int_0^{t}\|(u,\tau)\|^\sigma_{L^{2}}\|\nabla(u,\tau)\|^{2-\sigma}_{L^{2}}ds'   \\ \notag
	&\leq C\int_0^{t}(1+s)^{-(1+\alpha-\frac \sigma 2)}ds'\leq C.
	\end{align}
	Combining \eqref{3ineq32} and \eqref{3ineq33}, we get $M(t)\leq C$.
\end{proof}

\begin{lemm}\label{Lemma4}
	Let $0<\beta,\sigma\leq 1$ and $\frac {1} {2} \leq\alpha$. Under the condition in Proposition \ref{prop1}.
	For any $t\in [0,+\infty)$, if
	\begin{align}\label{de1}
	E_0(t)\leq C(1+t)^{-\alpha},~~~~E_1(t)\leq C(1+t)^{-\alpha-1},
	\end{align}
	and
	\begin{align}\label{3ineq34}
	(u,\tau)\in L^{\infty}(0,\infty;\dot{B}^{-\sigma}_{2,\infty}),
	\end{align}
	then there exists a constant $C$ such that
	\begin{align}\label{de2}
	E_0(t)\leq C(1+t)^{-\beta}~~~~and~~~~E_1(t)\leq C(1+t)^{-\beta-1},
	\end{align}
	where $\beta<\frac {\sigma+1} {2}$ for $\alpha=\frac {1} {2}$ and $\beta=\frac {\sigma+1} {2}$ for $\alpha>\frac {1} {2}$.
\end{lemm}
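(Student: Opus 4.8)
The plan is to bootstrap the decay rates from exponent $\alpha$ to exponent $\beta$ by running the Fourier splitting argument once more, but this time feeding in the extra information that $(u,\tau)\in \dot B^{-\sigma}_{2,\infty}$ uniformly in time. The key gain is that the low-frequency part of the linear evolution of the data, integrated over the shrinking ball $S(t)=\{\xi:|\xi|^2\le C_2(1+t)^{-1}\}$, is now controlled by $(1+t)^{-(\sigma+1)/2}$ rather than just $(1+t)^{-1}$, which is exactly what is needed to push the algebraic rate past $1$. Concretely, I would start from the differential inequality \eqref{3ineq19} (and its analogue \eqref{3ineq26} for $E_1$), so that after integrating \eqref{3ineq3} over $S(t)$ one must estimate $\int_{S(t)}(|\hat u_0|^2+|\hat\tau_0|^2)\,d\xi$ and the nonlinear Duhamel term $\int_{S(t)}\int_0^t(|\hat F\cdot\bar{\hat u}|+|\hat G\cdot\bar{\hat\tau}|)\,ds'\,d\xi$.

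First I would handle the data term: using Lemma \ref{Lemma0} exactly as in \eqref{3ineq5}, but summing $2^{-2j\sigma}$-weighted blocks rather than $2^{2j}$-weighted ones, one gets
$$\int_{S(t)}(|\hat u_0|^2+|\hat\tau_0|^2)\,d\xi \le C\,(1+t)^{-\frac{\sigma+1}{2}}\,\|(u_0,\tau_0)\|^2_{\dot B^{-\sigma}_{2,\infty}},$$
and likewise, since \eqref{3ineq34} is propagated, the solution itself satisfies $\int_{S(t)}(|\hat u|^2+|\hat\tau|^2)\,d\xi\lesssim (1+t)^{-(\sigma+1)/2}\|(u,\tau)\|^2_{\dot B^{-\sigma}_{2,\infty}}$ after one uses \eqref{3ineq3} to reduce to data plus nonlinearity. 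For the nonlinear term I would bound $|\hat F|,|\hat G|$ on $S(t)$ by $\|(F,G)\|_{L^1}\le C\|(u,\tau)\|_{L^2}\|\nabla(u,\tau)\|_{L^2}$ and $|\hat u|,|\hat\tau|$ by the same, picking up a factor $|S(t)|^{1/2}\sim (1+t)^{-1/2}$ from integrating the constant over $S(t)$ and a further $(1+t)^{-1/2}$ by interpolating $\|(u,\tau)\|_{L^2}$ and $\|\nabla(u,\tau)\|_{L^2}$ against \eqref{de1}; this is the same mechanism as \eqref{3ineq21}, and one checks the time integral $\int_0^t(\|u\|^2_{L^2}+\|\tau\|^2_{L^2})(\|\nabla u\|_{L^2}+\|\nabla\tau\|_{L^2})\,ds'$ converges or grows slowly enough using the assumed rates $E_0\le C(1+t)^{-\alpha}$, $E_1\le C(1+t)^{-\alpha-1}$ with $\alpha\ge\frac12$.

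Plugging these two bounds into \eqref{3ineq19}, multiplying by the integrating factor $(1+t)^{\beta}$ (or $(1+t)^{1+\gamma}$ as in \eqref{3ineq23}) and integrating in time yields $E_0(t)\le C(1+t)^{-\beta}$ with $\beta=\frac{\sigma+1}{2}$ when $\alpha>\frac12$ — because then the nonlinear contribution is genuinely lower order — and with any $\beta<\frac{\sigma+1}{2}$ when $\alpha=\frac12$, since in the borderline case the nonlinear term only gains a logarithm's worth of room and one must sacrifice an $\ep$ in the exponent, exactly the dichotomy recorded in the statement. The bound for $E_1$ follows by the identical Fourier-splitting step applied to \eqref{3ineq26}: insert $E_0\le C(1+t)^{-\beta}$ into the right side, note $\int_{S(t)}|\xi|^2(|\hat u|^2+|\hat\tau|^2)\,d\xi\le C_2(1+t)^{-1}\int_{S(t)}(|\hat u|^2+|\hat\tau|^2)\,d\xi\le C(1+t)^{-\beta-1}$, and run the same Gronwall/integrating-factor argument to get $E_1\le C(1+t)^{-\beta-1}$.

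The main obstacle I anticipate is the borderline case $\alpha=\tfrac12$: there the time integral $\int_0^t(\|u\|^2_{L^2}+\|\tau\|^2_{L^2})(\|\nabla u\|_{L^2}+\|\nabla\tau\|_{L^2})\,ds'$ behaves like $\int_0^t(1+s)^{-\alpha}(1+s)^{-(\alpha+1)/2}\,ds$, which for $\alpha=\tfrac12$ sits right at the threshold of logarithmic divergence after being weighted by the integrating factor, so one cannot close at the endpoint exponent and is forced to take $\beta$ strictly below $\frac{\sigma+1}{2}$; making the bookkeeping of this $\ep$-loss precise — and checking that it is harmless because Lemma \ref{Lemma3} can then be re-applied with the improved $\alpha$ to enlarge $\sigma$ — is the delicate point. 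Everything else is a repetition of the estimates already carried out in Propositions \ref{prop1}–\ref{prop3}.
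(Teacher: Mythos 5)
Your overall architecture is the paper's: rerun the Fourier splitting on \eqref{3ineq19}, reduce the low-frequency integral via \eqref{3ineq3} to a data term plus a Duhamel term, use the uniform $\dot B^{-\sigma}_{2,\infty}$ bound of the solution to gain decay, and then get the $E_1$ bound by inserting the improved $E_0$ into \eqref{3ineq26} (that last step is exactly what the paper does and is fine). However, the quantitative heart of the lemma --- the Duhamel estimate --- is not correctly executed. First, a bookkeeping error that recurs: for a function bounded in $\dot B^{-\sigma}_{2,\infty}$ one has $\int_{S(t)}|\hat u|^2\,d\xi\le C(1+t)^{-\sigma}\|u\|^2_{\dot B^{-\sigma}_{2,\infty}}$, not $C(1+t)^{-(\sigma+1)/2}\|u\|^2_{\dot B^{-\sigma}_{2,\infty}}$; for the data term this slip is harmless only because the hypotheses (the condition of Proposition \ref{prop1}) give $(u_0,\tau_0)\in\dot B^{-1}_{2,\infty}$ and hence a bound $C(1+t)^{-1}\le C(1+t)^{-\beta}$, which is what the paper actually uses.

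The genuine gap is in the nonlinear term. Your mechanism --- ``same as \eqref{3ineq21}'', i.e. $|\hat F|\le\|F\|_{L^1}$ plus the factor $|S(t)|^{1/2}\sim(1+t)^{-1/2}$, with ``a further $(1+t)^{-1/2}$ by interpolating $\|(u,\tau)\|_{L^2}$ and $\|\nabla(u,\tau)\|_{L^2}$ against \eqref{de1}'' while still keeping the time integral $\int_0^t\|(u,\tau)\|^2_{L^2}\|\nabla(u,\tau)\|_{L^2}\,ds'$ --- does not work: the interpolation only produces decay in the integration variable $s$, which cannot be converted into a power of $(1+t)$ because in \eqref{3ineq3} there is no decaying kernel in $t-s$, and in any case you would be double counting the same factors you keep under the time integral. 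With only the \eqref{3ineq21} mechanism you recover $(1+t)^{-1/2}$, i.e. Proposition \ref{prop3}, not the improvement. The correct step (the paper's \eqref{3ineq36}) is to write $|\hat F(s)\cdot\bar{\hat u}(s)|\le\|F(s)\|_{L^1}|\hat u(s)|$, apply Cauchy--Schwarz in $\xi$ over $S(t)$ to get $(1+t)^{-1/2}\bigl(\int_{S(t)}|\hat u(s)|^2d\xi\bigr)^{1/2}$, and then use hypothesis \eqref{3ineq34} on the shrinking, $t$-dependent set $S(t)$, uniformly in $s$: $\bigl(\int_{S(t)}|\hat u(s)|^2d\xi\bigr)^{1/2}\le C(1+t)^{-\sigma/2}M(t)$. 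This leaves precisely $\int_0^t(\|F\|_{L^1}+\|G\|_{L^1})\,ds\lesssim\int_0^t(1+s)^{-(\alpha+\frac12)}\,ds$, and it is this integral that produces the dichotomy in the statement: it converges for $\alpha>\frac12$ (giving $\beta=\frac{\sigma+1}{2}$) and is $\sim\ln(1+t)$ exactly at $\alpha=\frac12$ (forcing $\beta<\frac{\sigma+1}{2}$). Your threshold integrand $\int_0^t(1+s)^{-\alpha}(1+s)^{-(\alpha+1)/2}\,ds$ converges at $\alpha=\frac12$, so it would not explain the $\ep$-loss; the borderline is attributed to the wrong quantity. As written, the proposal therefore does not close the key estimate that upgrades $(1+t)^{-1/2}$ to $(1+t)^{-\beta}$.
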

\begin{proof}
	According to the proof of Proposition \ref{prop2}, we have
	\begin{align}\label{3ineq35}
	&\frac d {dt} E_0(t)+\frac {kC_2} {2(1+t)}\| u\|^2_{H^s}+\frac {C_2} {1+t}\|\tau\|^2_{H^{s}}  \\ \notag
	&\leq \frac {CC_2} {1+t}((1+t)^{-1}+\int_{S(t)}\int_{0}^{t}|\hat{F}\cdot\bar{\hat{u}}|+|\hat{G}\cdot\bar{\hat{\tau}}|dsd\xi).
	\end{align}
	Using \eqref{de1} and \eqref{3ineq34}, we infer that
	\begin{align}\label{3ineq36}
	\int_{S(t)}\int_{0}^{t}|\hat{F}\cdot\bar{\hat{u}}|+|\hat{G}\cdot\bar{\hat{\tau}}|dsd\xi
	&\leq C\int_{0}^{t}(\|F\|_{L^{1}}\int_{S(t)}|\hat{u}|d\xi+\|G\|_{L^{1}}\int_{S(t)}|\hat{\tau}|d\xi)ds \\ \notag
	&\leq C(1+t)^{-\frac 1 2}\int_{0}^{t}(\|F\|_{L^{1}}+\|G\|_{L^{1}})(\int_{S(t)}|\hat{u}|^2+|\hat{\tau}|^2d\xi)^{\frac 1 2} ds  \\ \notag
	&\leq C(1+t)^{-\frac {\sigma+1} {2}}M(t)\int_{0}^{t}\|F\|_{L^{1}}+\|G\|_{L^{1}}ds  \\ \notag
	&\leq C(1+t)^{-\frac {\sigma+1} {2}}\int_{0}^{t}(1+s)^{-(\alpha+\frac 1 2)}ds  \\ \notag
	&\leq C(1+t)^{-\beta}.
	\end{align}
	By virtue of \eqref{3ineq35} and \eqref{3ineq36}, we get
	\begin{align}\label{3ineq37}
	E_0(t)\leq C(1+t)^{-\beta}.
	\end{align}
	Moreover, we have
	\begin{align}
	\frac d {dt} E_1+\frac { C_2} {1+t}(\frac k 2\|\nabla u\|^2_{H^{s-1}}+\|\nabla \tau\|^2_{H^{s-1}}) &\leq \frac {C} {1+t}\int_{S(t)}|\xi|^2(|\hat{u}(\xi)|^2+|\hat{\tau}(\xi)|^2) d\xi.  \\ \notag
    &\leq C (1+t)^{-2}(\|u\|^2_{L^2}+\|\tau\|^2_{L^2})   \\ \notag
    &\leq C (1+t)^{-2-\beta},
	\end{align}
	which implies that $E_1\leq C(1+t)^{-1-\beta}$.
\end{proof}
We improve the decay rates in $H^1$ by Lemmas \ref{Lemma3} and \ref{Lemma4}. However, considering the decay rate for the highest derivative of the solution to \eqref{eq1}, the main difficulty is unclosed energy estimate. To overcome this difficulty, we introduce a new method which flexibly combines the Fourier splitting method and the time weighted energy estimate.
\begin{prop}\label{prop4}
	Assume that $(u_0,\tau_0)$ satisfy the condition in Proposition \ref{prop1}, then there exists a constant $C$ such that
	\begin{align}\label{decay5}
	\|\Lambda^{s_1}(u,\tau)\|_{L^2} \leq C(1+t)^{-\frac{1+{s_1}}{2}},
	\end{align}
	where $s_1\in[0,s]$.
\end{prop}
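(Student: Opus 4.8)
The plan is to close the gap between the algebraic decay $E_0\le C(1+t)^{-\beta}$ with $\beta<1$ obtained by combining Proposition \ref{prop3} with Lemmas \ref{Lemma3} and \ref{Lemma4}, and the optimal rate $\beta=1$; once $E_0\le C(1+t)^{-1}$ and $E_1\le C(1+t)^{-2}$ are in hand, the intermediate estimate \eqref{decay5} for $s_1\in(0,s)$ follows by interpolating $\|\Lambda^{s_1}(u,\tau)\|_{L^2}\le C\|(u,\tau)\|_{L^2}^{1-s_1/s}\|\Lambda^s(u,\tau)\|_{L^2}^{s_1/s}$ against $E_0^{1/2}$ and $E_s^{1/2}$ (noting $E_s\sim\|\Lambda^s(u,\tau)\|_{L^2}^2$ up to the small inner-product correction, which is controlled by Young's inequality). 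So the heart of the matter is the bootstrap from $\beta$ close to $1$ up to $\beta=1$.

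The mechanism is iteration of Lemma \ref{Lemma4} fed by Lemma \ref{Lemma3}. Start from $\alpha_0=\tfrac12$ (Proposition \ref{prop3}). Lemma \ref{Lemma3} with $\sigma$ any number strictly less than $2\alpha_0=1$, say $\sigma_0\in(0,1)$, gives $(u,\tau)\in L^\infty(0,\infty;\dot B^{-\sigma_0}_{2,\infty})$. Feeding this into Lemma \ref{Lemma4} (the case $\alpha=\tfrac12$) yields $E_0\le C(1+t)^{-\beta_1}$ with any $\beta_1<\tfrac{\sigma_0+1}{2}$; choosing $\sigma_0$ close to $1$ we get $\beta_1$ as close to $1$ as we like but still $<1$. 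Now with $\alpha=\beta_1>\tfrac12$ we are in the second alternative of Lemma \ref{Lemma4}: apply Lemma \ref{Lemma3} again with $\sigma_1\in(0,1)$ (legal since $2\beta_1>1$), obtaining $\dot B^{-\sigma_1}_{2,\infty}$ membership, then Lemma \ref{Lemma4} with $\alpha>\tfrac12$ delivers the \emph{equality} $\beta_2=\tfrac{\sigma_1+1}{2}$; taking $\sigma_1\uparrow 1$ gives $\beta_2\uparrow 1$. The key point is that once $\alpha>\tfrac12$ the constant produced by Lemma \ref{Lemma4} does not depend on how close $\sigma$ is to $1$ in a way that blows up (examining the proof: the time integral $\int_0^t(1+s)^{-(\alpha+1/2)}ds$ converges for $\alpha>\tfrac12$ uniformly), so we may simply take $\sigma=1$ directly in that lemma's argument, or pass to the limit, to conclude $E_0\le C(1+t)^{-1}$ and $E_1\le C(1+t)^{-2}$.

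Concretely I would: (i) record that by Proposition \ref{prop3} the hypothesis \eqref{de0} of Lemma \ref{Lemma3} holds with $\alpha=\tfrac12$; (ii) fix $\sigma\in(\tfrac12,1)$ and invoke Lemma \ref{Lemma3} to get $(u,\tau)\in L^\infty(0,\infty;\dot B^{-\sigma}_{2,\infty})$; (iii) invoke Lemma \ref{Lemma4} with this $\sigma$ and $\alpha=\tfrac12$ to upgrade to $E_0\le C(1+t)^{-\beta}$, $E_1\le C(1+t)^{-\beta-1}$ for some $\beta\in(\tfrac12,1)$; (iv) re-run Lemma \ref{Lemma3} (now permissible with $\sigma=1$ since $\sigma<2\alpha=2\beta$) followed by Lemma \ref{Lemma4} in its $\alpha>\tfrac12$ branch with $\sigma=1$, which gives exactly $E_0\le C(1+t)^{-1}$ and $E_1\le C(1+t)^{-2}$; (v) finally, for general $s_1\in[0,s]$, interpolate as above, using the equivalence $E_s\simeq \|\Lambda^s(u,\tau)\|_{L^2}^2$ (valid because the cross term $2k\langle\nabla\Lambda^s u,\Lambda^s\tau\rangle$ is absorbed for $k$ small) together with the uniform-in-time bound $E_s(t)\le E_s(0)<\infty$ coming from \eqref{estimate}, to get $\|\Lambda^{s_1}(u,\tau)\|_{L^2}\le C E_0^{\frac{s-s_1}{2s}}\,\|\Lambda^s(u,\tau)\|_{L^2}^{\frac{s_1}{s}}\le C(1+t)^{-\frac{s-s_1}{2s}}$, which is weaker than claimed — hence the genuine work for the \emph{top} derivative must instead use the weighted functional $\widetilde E_s$, $\widetilde D_s$ advertised in the introduction.

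The main obstacle, therefore, is step (iv)/(v) for $s_1$ near $s$: the crude interpolation above only gives rate $\tfrac{s-s_1}{2s}$, far from $\tfrac{1+s_1}{2}$, so one genuinely needs the time-weighted energy argument with $\widetilde E_s=k(1+t)\|\Lambda^s(u,\tau)\|_{L^2}^2+\langle\Lambda^{s-1}\tau,-\nabla\Lambda^{s-1}u\rangle$ and its dissipation $\widetilde D_s$. The plan for that is to differentiate $\widetilde E_s$, use \eqref{3ineq13}--\eqref{3ineq14} to produce $\widetilde D_s$ on the left, estimate the error term $k\|\Lambda^s(u,\tau)\|_{L^2}^2$ (from the $(1+t)'$ hit) by the already-known rate $E_1\le C(1+t)^{-2}$ applied at order $s=1$ scaled up — more precisely by first establishing \eqref{decay5} for $s_1\le 1$ via the clean bootstrap and then running a Fourier-splitting step on $\widetilde E_s$ itself with the set $S(t)=\{|\xi|^2\le C_2/(1+t)\}$, where the low-frequency piece is controlled by the already-proven decay of lower-order norms. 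Carefully tracking the exponents, this yields $(1+t)\|\Lambda^s(u,\tau)\|_{L^2}^2\le C(1+t)^{-s}$, i.e. \eqref{decay5} at $s_1=s$, and then one last interpolation between $s_1=0$ and $s_1=s$ closes all intermediate cases. I expect the bookkeeping of which decay rate is available at which derivative order — making sure the error terms are always strictly better than the target — to be the only real subtlety; everything else is the machinery already assembled in Propositions \ref{prop1}--\ref{prop3} and Lemmas \ref{Lemma3}--\ref{Lemma4}.
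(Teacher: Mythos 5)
Your overall route is the paper's own: the iteration of Lemmas \ref{Lemma3} and \ref{Lemma4} starting from Proposition \ref{prop3} to reach $E_0\le C(1+t)^{-1}$ and $E_1\le C(1+t)^{-2}$ is exactly what the paper does (it simply fixes the intermediate values $\sigma=\alpha=\tfrac12$, $\beta=\tfrac58$, then $\sigma=\beta=1$, rather than letting $\sigma\uparrow 1$), the top derivative is indeed treated there via the weighted functionals $\widetilde E_s,\widetilde D_s$ combined with Fourier splitting, and intermediate $s_1$ follows by interpolation between $s_1=0$ and $s_1=s$. So the skeleton is correct and matches.

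However, the part you defer to ``careful bookkeeping'' is where the proof actually lives, and your stated plan for it would fail if followed literally. You propose to produce $\widetilde D_s$ from \eqref{3ineq13}--\eqref{3ineq14} and to treat $k\|\Lambda^s(u,\tau)\|^2_{L^2}$ (from differentiating the weight) as the only error. But multiplying \eqref{3ineq13} by $k(1+t)$ leaves $Ck\delta(1+t)D_1$ on the right, and $D_1$ contains $\|\Lambda^s u\|^2_{L^2}$; since the $u$-part of $\widetilde D_s$, namely $\tfrac14\|\nabla\Lambda^{s-1}u\|^2_{L^2}=\tfrac14\|\Lambda^s u\|^2_{L^2}$, carries no time weight, the term $\delta k(1+t)\|\Lambda^s u\|^2_{L^2}$ cannot be absorbed for large $t$. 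The paper instead goes back to the unabsorbed estimates \eqref{3ineq12} and \eqref{3ineq14} and uses the freshly obtained optimal rates \eqref{lineq1}, i.e. $\|\nabla u\|_{L^\infty}\le C(1+t)^{-1}$ and $\|u\|^2_{L^\infty}\le C(1+t)^{-3/2}$, to cancel the factor $(1+t)$ term by term, and then needs the interpolation $\|\Lambda^s\tau\|^2_{L^2}\le C\|\Lambda^{s-1}\tau\|_{L^2}\|\Lambda^{s+1}\tau\|_{L^2}\le \tfrac k4(1+t)\|\Lambda^{s+1}\tau\|^2_{L^2}+C(1+t)^{-s-1}$ to convert the remaining error into the forcing $C(1+t)^{-s-1}$ (this is \eqref{hineq1}--\eqref{hineq2}). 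Likewise, passing from $\tfrac d{dt}\widetilde E_s+\widetilde D_s\le C(1+t)^{-s-1}$ to $\|\Lambda^s(u,\tau)\|^2_{L^2}\le C(1+t)^{-s-1}$ is not mere exponent tracking: $\widetilde E_s$ is not pointwise comparable to $(1+t)\|\Lambda^s(u,\tau)\|^2_{L^2}$ because of the sign-indefinite cross term $\langle\Lambda^{s-1}\tau,-\nabla\Lambda^{s-1}u\rangle$, so one must multiply by $(1+t)^{s+1}$, integrate in time, and remove the cross-term contributions by Young's inequality together with the $L^2$ decay of $\tau$, as in \eqref{hineq5}--\eqref{hineq6}. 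With these two ingredients supplied, your outline coincides with the paper's argument; the bootstrap half and the final interpolation for $s_1\in(0,s)$ are fine as written.
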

\begin{proof}
We first improve the decay rate in Proposition \ref{prop3}. According to Proposition \ref{prop3} and Lemma \ref{Lemma3} with $\sigma=\alpha=\frac 1 2$, we have
\begin{align*}
(u,\tau)\in L^{\infty}(0,\infty;\dot{B}^{-\frac 1 2}_{2,\infty}).
\end{align*}
By virtue of Lemma \ref{Lemma4} with $\alpha=\sigma=\frac 1 2$ and $\beta=\frac 5 8$, we infer that
\begin{align*}
E_0(t)\leq C(1+t)^{-\frac 5 8}~~~~and~~~~E_1(t)\leq C(1+t)^{-\frac 5 8-1}.
\end{align*}
Taking $\sigma=1$ and $\alpha=\frac 5 8$ in Lemma \ref{Lemma3}, we get
\begin{align}
(u,\tau)\in L^{\infty}(0,\infty;\dot{B}^{-1}_{2,\infty}).
\end{align}
Taking advantage of Lemma \ref{Lemma4} again with $\alpha=\frac 5 8$ and $\sigma=\beta=1$, we deduce that
\begin{align}\label{lineq1}
E_0(t)\leq C(1+t)^{-1}~~~~and~~~~E_1(t)\leq C(1+t)^{-2}.
\end{align}

Then, we introduce some new energy and dissipation functionals for $(u,\tau)$ as follows:
$$\widetilde{E}_s=k(1+t)\|\Lambda^s(u,\tau)\|^2_{L^2}+\langle \Lambda^{s-1}\tau, -\nabla\Lambda^{s-1} u\rangle,$$
and
$$\widetilde{D}_s=k(1+t)\|\nabla\Lambda^s\tau\|^2_{L^2}+ \frac 1 4\|\nabla\Lambda^{s-1} u\|^2_{L^2},$$
where $k$ is small enough.
Using \eqref{3ineq12}, \eqref{3ineq14}, \eqref{lineq1} and Lemmas \ref{Lemma1}-\ref{Lemma2}, we deduce that
	\begin{align}\label{hineq1}
	\frac d {dt} \widetilde{E}_s+ 2\widetilde{D}_s
	&\leq k\|\Lambda^s(u,\tau)\|^2_{L^2}+Ck(1+t)(\|\nabla u\|_{L^\infty}\|\Lambda^s u\|^2_{L^2}+\|\tau\|_{L^\infty}\|\Lambda^s u\|_{L^2}\|\Lambda^{s+1} \tau\|_{L^2}  \\ \notag
	&+\|u\|_{L^\infty}\|\Lambda^s \tau\|_{L^2}\|\Lambda^{s+1} \tau\|_{L^2}+\|\nabla u\|_{L^\infty}\|\Lambda^{s-1} \tau\|_{L^2}\|\Lambda^{s+1} \tau\|_{L^2}), \\ \notag
	&+ C\|\Lambda^s u\|_{L^2}(\|u\|_{L^\infty}\|\Lambda^s \tau\|_{L^2}+\|\tau\|_{L^\infty}\|\Lambda^s u\|_{L^2}+\|\Lambda^{s+1} \tau\|_{L^2})  \\ \notag
	&+ C\|\Lambda^s \tau\|_{L^2}(\|u\|_{L^\infty}\|\Lambda^s u\|_{L^2}+\|\Lambda^{s} \tau\|_{L^2})	\\ \notag
	&\leq (Ck+\frac 1 2)\widetilde{D}_s+Ck(1+t)\|u\|^2_{L^\infty}\|\Lambda^s \tau\|^2_{L^2}+C\|\Lambda^s \tau\|^2_{L^2}   \\ \notag
	&+Ck(1+t)\|\nabla u\|_{L^\infty}\|\Lambda^{s-1} \tau\|_{L^2}\|\Lambda^{s+1} \tau\|_{L^2}  \\ \notag
	&\leq (Ck+\frac 1 2)\widetilde{D}_s+C\|\Lambda^{s-1} \tau\|_{L^2}\|\Lambda^{s+1} \tau\|_{L^2},
	\end{align}
	where we use $\|\Lambda^s \tau\|^2_{L^2}\leq C\|\Lambda^{s-1} \tau\|_{L^2}\|\Lambda^{s+1} \tau\|_{L^2}$. By \eqref{lineq1} and Lemma \ref{Lemma1}, we have
	\begin{align}\label{hineq2}
	C\|\Lambda^{s-1} \tau\|_{L^2}\|\Lambda^{s+1} \tau\|_{L^2}&\leq C\| \tau\|^{\frac 2 {s+1}}_{L^2}\|\Lambda^{s+1} \tau\|^{\frac {2s}{s+1}}_{L^2}  \\ \notag
	&\leq\frac k 4 (1+t)\|\Lambda^{s+1} \tau\|^{2}_{L^2}+C(1+t)^{-s}\| \tau\|^{2}_{L^2}  \\ \notag
	&\leq\frac 1 4 \widetilde{D}_s+C(1+t)^{-s-1}.
	\end{align}
	Combining \eqref{hineq1} and \eqref{hineq2}, we infer that
	\begin{align}\label{hineq3}
	\frac d {dt} \widetilde{E}_s+ \widetilde{D}_s\leq C(1+t)^{-s-1},
	\end{align}
	which implies that	
	\begin{align}\label{hineq4}
	\frac d {dt} \widetilde{E}_s+ kC_2\|\Lambda^s\tau\|^2_{L^2}+ \frac 1 4\|\nabla\Lambda^{s-1} u\|^2_{L^2}&\leq C(1+t)^{-s-1}+kC_2
	\int_{S(t)}|\xi|^{2s}|\hat{\tau}(\xi)|^2 d\xi \\ \notag
	&\leq C(1+t)^{-s-1}.
	\end{align}	
	According to \eqref{hineq4}, we infer that
	\begin{align}\label{hineq5}
	&(1+t)^{s+1} \widetilde{E}_s+\int_{0}^{t}k(1+s')^{s+1} \|\Lambda^s(\tau, u)\|^2_{L^2}ds'  \\  \notag
	&\leq C(1+t)+C\int_{0}^{t}(1+s')^{s}\langle \Lambda^{s-1}\tau, -\nabla\Lambda^{s-1} u\rangle ds' \\ \notag
	&\leq C(1+t)+\frac k 2\int_{0}^{t}(1+s')^{s+1} \|\Lambda^{s} u\|^2_{L^2}ds'+C\int_{0}^{t}(1+s')^{s-1} \|\Lambda^{s-1} \tau\|^2_{L^2}ds'  \\ \notag
	&\leq C(1+t)+\frac k 2\int_{0}^{t}(1+s')^{s+1} \|\Lambda^s(\tau, u)\|^2_{L^2}ds'+C\int_{0}^{t}(1+s')^{1-s} \|\tau\|^2_{L^2}ds'  \\ \notag
	&\leq C(1+t)+\frac k 2\int_{0}^{t}(1+s')^{s+1} \|\Lambda^s(\tau, u)\|^2_{L^2}ds'.
	\end{align}	
	Using Lemma \ref{Lemma1} again, we obtain
	\begin{align}\label{hineq6}
	(1+t)^{s+2}\|\Lambda^s(u,\tau)\|^2_{L^2}&\leq C(1+t)+C(1+t)^{s+1}\langle \Lambda^{s-1}\tau, -\nabla\Lambda^{s-1} u\rangle \\  \notag
	&\leq C(1+t)+k(1+t)^{s+2}\|\Lambda^s u\|^2_{L^2}+C(1+t)^{s}\|\Lambda^{s-1}\tau\|^2_{L^2} \\  \notag
	&\leq C(1+t)+k(1+t)^{s+2}\|\Lambda^s(u,\tau)\|^2_{L^2}+C(1+t)^{2-s}\|\tau\|^2_{L^2},
	\end{align}
	which implies that
	\begin{align}\label{hineq7}
	\|\Lambda^s(u,\tau)\|^2_{L^2}\leq C(1+t)^{-s-1}.
	\end{align}	
We thus complete the proof of Proposition \ref{prop4}.
\end{proof}

{\bf The proof of Theorem \ref{th2}:}
By Proposition \ref{prop4}, we only need to prove the lower bound of the decay rate. We first consider the linear system of \eqref{eq1} with $\beta=1$:
\begin{align}\label{eq5}
\left\{\begin{array}{l}
\partial_tu_{L}+\nabla P_{L}-div~\tau_{L}=0,~~~~div~u_{L}=0,\\[1ex]
\partial_t\tau_{L}-D(u_{L})-\Delta\tau_{L}=0,\\[1ex]
u_{L}|_{t=0}=u_0,~~\tau_{L}|_{t=0}=\tau_0. \\[1ex]
\end{array}\right.
\end{align}
According to Proposition \ref{prop4} and Lemma \ref{Lemma3}, one can deduce that $\|\Lambda^{s_1}(u_L,\tau_L)\|^2_{L^2}\leq C(1+t)^{-s_1-1}$ and $(u_L,\tau_L)\in L^{\infty}(0,\infty;\dot{B}^{-1}_{2,\infty})$. Applying Fourier transform to \eqref{eq5}, we get
\begin{align}\label{eq6}
\left\{
\begin{array}{ll}
\partial_t\hat{u}_{L}^{j}+i\xi_{j} \hat{P}_{L}-i\xi_{k} \hat{\tau}_{L}^{jk}=0,  \\[1ex]
\partial_t\hat{\tau}_{L}^{jk}+|\xi|^2\hat{\tau}_{L}^{jk}-\frac i 2(\xi_{k} \hat{u}_{L}^j+\xi_{j} \hat{u}_{L}^k)=0.\\[1ex]
\end{array}
\right.
\end{align}
We introduce a new weighted energy estimate instead of complex spectral analysis to prove the lower bound of the decay rate. From \eqref{eq6}, we have
\begin{align}\label{hineq8}
\frac 1 2 \frac d {dt} [e^{2|\xi|^2 t}|(\hat{u}_{L},\hat{\tau}_{L})|^2]- |\xi|^2 e^{2|\xi|^2 t}|\hat{u}_{L}|^2=0,
\end{align}
which implies that
\begin{align}\label{hineq9}
|\xi|^{2s_1}|(\hat{u}_{L},\hat{\tau}_{L})|^2=|\xi|^{2s_1}e^{-2|\xi|^2 t}|(\hat{u}_{0},\hat{\tau}_{0})|^2+\int_{0}^{t}2|\xi|^{2s_1+2} e^{-2|\xi|^2 (t-s')}|\hat{u}_{L}|^2ds'.
\end{align}
According to $0<c_0=|\int_{\mathbb{R}^2}(u_0,\tau_0)dx|=|(\hat{u}_{0}(0),\hat{\tau}_{0}(0))|$, we deduce that there exists $\eta>0$ such that $|(\hat{u}_{0}(\xi),\hat{\tau}_{0}(\xi))|\geq \frac {c_0} 2$ if $\xi \in B(0,\eta)$. From \eqref{hineq9}, we have
\begin{align}\label{hineq10}
\|(u_{L},\tau_{L})\|_{\dot{H}^{s_1}}^2&\geq \int_{|\xi|\leq \eta} |\xi|^{2s_1}e^{-2|\xi|^2 t}|(\hat{u}_{0},\hat{\tau}_{0})|^2 d\xi  \\ \notag
&\geq \frac {c^2_0}{4}\int_{|\xi|\leq \eta} |\xi|^{2s_1}e^{-2|\xi|^2 t} d\xi \\ \notag
&\geq C^2_0(1+t)^{-1-s_1},
\end{align}
where $C^2_0=\frac {c^2_0}{4}\int_{|y|\leq \eta} |y|^{2s_1}e^{-2|y|^2} dy$.
Taking $u_{N}=u-u_{L}$, $\tau_{N}=\tau-\tau_{L}$ and $P_{N}=P-P_{L}$, then we immediately obtain $\|\Lambda^{s_1}(u_N,\tau_N)\|^2_{L^2}\leq C(1+t)^{-s_1-1}$ and $(u_N,\tau_N)\in L^{\infty}(0,\infty;\dot{B}^{-1}_{2,\infty})$. Moreover, we have
\begin{align}\label{eq7}
\left\{\begin{array}{l}
\partial_tu_{N}+\nabla P_{N}-div~\tau_{N}=F,~~~~div~u_{N}=0,\\[1ex]
\partial_t\tau_{N}-D(u_{N})-\Delta\tau_{N}=G,\\[1ex]
u_{N}|_{t=0}=\tau_{N}|_{t=0}=0. \\[1ex]
\end{array}\right.
\end{align}
According to \eqref{eq7} and time decay rates for $(u_N,\tau_N)$ and $(u,\tau)$, we deduce that
\begin{align}\label{hineq11}
	\frac 1 2 \frac d {dt} \|(u_N,\tau_N)\|^2_{L^2}+ \|\nabla\tau_N\|^2_{L^2}&=\langle F,u_N \rangle+\langle G,\tau_N \rangle  \\ \notag
	&\leq C\|\nabla u\|_{L^2}\|u\|_{L^4}\|u_N\|_{L^4}+C\|\tau_N\|_{L^4}\|\nabla(\tau,u)\|_{L^2}\|(\tau,u)\|_{L^4} \\ \notag
	&\leq C\delta(1+t)^{-2},
\end{align}
and
\begin{align}\label{hineq12}
	\frac d {dt} \langle \tau_N, -\nabla u_N\rangle+ \frac 1 2\|\nabla u_N\|^2_{L^2}
	&=\langle G+\Delta \tau_N,-\nabla u_N\rangle-\langle \mathbb{P}(F+div ~\tau_N),div~\tau_N\rangle  \\ \notag
	&\leq C(1+t)^{-2}.
\end{align}
Using \eqref{eq7} again, we obtain
\begin{align}\label{hineq13}
\frac 1 2\frac d {dt} \|\nabla u_N\|^2_{L^2}&=\langle F,-\Delta u_N\rangle-\langle div~\tau_N,-\Delta u_N\rangle \\ \notag
&\leq C\delta(1+t)^{-2}.
\end{align}
By \eqref{hineq11}-\eqref{hineq13}, we have
\begin{align}\label{hineq14}
&\frac d {dt} (\|(u_N,\tau_N)\|^2_{L^2}+\|\nabla u_N\|^2_{L^2}  +2k_0\langle \tau_N, -\nabla u_N\rangle)  \\ \notag &+2\|\nabla\tau_N\|^2_{L^2}+k_0\|\nabla u_N\|^2_{L^2}  \\ \notag
&\leq C(\delta+k_0)(1+t)^{-2},
\end{align}
which implies that
	\begin{align}\label{hineq15}
		&\frac d {dt} (\|(u_N,\tau_N)\|^2_{L^2}+\|\nabla u_N\|^2_{L^2}+2k_0\langle \tau_N, -\nabla u_N\rangle)+\frac {k_0 C_1} {2(1+t)}\|u_N\|^2_{H^1}+\frac {C_1} {1+t}\|\tau_N\|^2_{L^2}  \\ \notag
		&\leq \frac {CC_1} {1+t}\int_{S(t)}|\hat{u}_N(\xi)|^2+|\hat{\tau}_N(\xi)|^2 d\xi+C(\delta+k_0)(1+t)^{-2}.
	\end{align}
Similar to \eqref{3ineq20} and \eqref{3ineq36}, we infer that
\begin{align}\label{hineq16}
	\int_{S(t)}|\hat{u}_N|^2+|\hat{\tau}_N|^2d\xi
	&\leq C\int_{S(t)}\int_{0}^{t}|\hat{F}\cdot\bar{\hat{u}}_N|
	+|\hat{G}\cdot\bar{\hat{\tau}}_N|ds'd\xi  \\ \notag
	&\leq C(1+t)^{-1} \int_{0}^{t}\|(u,\tau)\|_{L^{2}}\|\nabla (u,\tau)\|_{L^{2}}\|(u_N,\tau_N)\|_{\dot{B}^{-1}_{2,\infty}}ds'  \\ \notag
	&\leq C\delta(1+t)^{-1}.
\end{align}
According to \eqref{hineq15} and \eqref{hineq16}, we obtain
\begin{align*}
	\|(u_N,\tau_N)\|^2_{L^2}+\|\nabla u_N\|^2_{L^2}\leq C(\delta C_1+k_0)(1+t)^{-1}.
\end{align*}
Applying $\Lambda^{s_1}$ to \eqref{eq7}, $0\leq s_1 \leq s$, we get
\begin{align}\label{eq8}
\left\{\begin{array}{l}
\partial_t\Lambda^{s_1}u_{N}+\nabla\Lambda^{s_1} P_{N}-div~\Lambda^{s_1}\tau_{N}=\Lambda^{s_1}F,\\[1ex]
\partial_t\Lambda^{s_1}\tau_{N}-D(\Lambda^{s_1}u_{N})-\Delta\Lambda^{s_1}\tau_{N}=\Lambda^{s_1}G.\\[1ex]
\end{array}\right.
\end{align}
Using \eqref{eq8}, Lemmas \ref{Lemma1}-\ref{Lemma2} and time decay rates, we infer that
\begin{align}\label{hineq17}
	&\frac 1 2\frac d {dt} \|\Lambda^s(u_N,\tau_N)\|^2_{L^2}+ \|\nabla\Lambda^s\tau_N\|^2_{L^2}  \\ \notag
	&=-\langle [\Lambda^s,u\cdot\nabla] u,\Lambda^s u_N\rangle-\langle u\cdot\nabla\Lambda^s u,\Lambda^s u_N\rangle+\langle \Lambda^s G,\Lambda^s \tau_N\rangle \\ \notag
	&=-\langle [\Lambda^s,u\cdot\nabla] u,\Lambda^s u_N\rangle-\langle u\cdot\nabla\Lambda^s u_L,\Lambda^s u_N\rangle+\langle \Lambda^{s-1} G,\Lambda^{s+1} \tau_N\rangle \\ \notag
	&\leq C\delta((1+t)^{-2-s}+(1+t)^{-1-\frac s 2}\|\Lambda^{s+1}\tau_N\|_{L^2})+C\|u\|_{L^\infty}\|\nabla\Lambda^{s} u_L\|_{L^2}\|\Lambda^{s} u_N\|_{L^2}  \\ \notag
	&\leq C\delta(1+t)^{-2-s}+C\delta\|\Lambda^{s+1}\tau_N\|^2_{L^2},
	\end{align}
	and
	\begin{align}\label{hineq18}
	&\frac d {dt} \langle \Lambda^{s-1}\tau_N, -\nabla\Lambda^{s-1} u_N\rangle+ \frac 1 2\|\nabla\Lambda^{s-1} u_N\|^2_{L^2}  \\ \notag
	&=\langle \Lambda^{s-1}(G+\Delta \tau_N),-\nabla\Lambda^{s-1} u_N\rangle+\langle \Lambda^{s-1}\mathbb{P}(F+div~\tau_N),div~ \Lambda^{s-1} \tau_N\rangle  \\ \notag
	&\leq \frac 1 4\|\nabla\Lambda^{s-1} u_N\|^2_{L^2}+C\|\Lambda^{s} \tau_N\|^2_{H^1}+C\delta(1+t)^{-1-s}.
	\end{align}	
	Combining \eqref{hineq17} and \eqref{hineq18}, we get
	\begin{align}\label{hineq19}
	&\frac d {dt} [(1+t)\|\Lambda^s(u_N,\tau_N)\|^2_{L^2}+4\langle \Lambda^{s-1}\tau_N, -\nabla\Lambda^{s-1} u_N\rangle]+(1+t)\|\nabla\Lambda^s\tau_N\|^2_{L^2}+\|\nabla\Lambda^{s-1} u_N\|^2_{L^2} \\ \notag
	&\leq \|\Lambda^s(u_N,\tau_N)\|^2_{L^2}+C\|\Lambda^s\tau_N\|^2_{L^2}+C\delta(1+t)^{-1-s},
	\end{align}
	which implies that
	\begin{align}\label{hineq20}
	&\frac d {dt} [(1+t)\|\Lambda^s(u_N,\tau_N)\|^2_{L^2}+4\langle \Lambda^{s-1}\tau_N, -\nabla\Lambda^{s-1} u_N\rangle]+\frac {C_2} {2}\|\Lambda^s\tau_N\|^2_{L^2}+\|\nabla\Lambda^{s-1} u_N\|^2_{L^2} \\ \notag
	&\leq C_2\int_{S_0(t)}|\xi|^{2s}|\hat{\tau}_N(\xi)|^2 d\xi+C\delta(1+t)^{-1-s} \\ \notag
	&\leq C[\delta+(\delta C_1+k_0)C_2](1+t)^{-1-s}.
	\end{align}
	Note that $k_0C_1$ and $C_2$ large enough. We need to take $(\delta C_1+k_0)C_2$ small enough.
	According to \eqref{hineq20}, we obtain
	\begin{align}\label{hineq21}
	(1+t)^{s+2}\|\Lambda^s(u_N,\tau_N)\|^2_{L^2}&\leq 4(1+t)^{s+1}\langle \Lambda^{s-1}\tau_N, \nabla\Lambda^{s-1} u_N\rangle+C[\delta+(\delta+k_0)kC_2](1+t)  \\ \notag
	&\leq \frac 1 2 (1+t)^{s+2}\|\Lambda^s(u_N,\tau_N)\|^2_{L^2}+C[\delta +(\delta C_1+k_0)C_2](1+t).
	\end{align}
	Take suitable constants in \eqref{hineq21}, we have
	\begin{align}\label{hineq22}
	\|\Lambda^s(u_N,\tau_N)\|^2_{L^2}\leq \frac {C^2_0}{4}(1+t)^{-1-s},
	\end{align}
    which implies that
    \begin{align}\label{hineq23}
    \|\Lambda^{s_1}(u_N,\tau_N)\|^2_{L^2}\leq \frac {C^2_0}{4}(1+t)^{-1-s_1}.
    \end{align}
    According to \eqref{hineq10} and \eqref{hineq23}, we infer that
    \begin{align}\label{hineq24}
    \|(u,\tau)\|_{\dot{H}^{s_1}}^2\geq \frac {C^2_0}{4}(1+t)^{-1-s_1}.
    \end{align}	
    Therefore, we complete the proof of Theorem \ref{th2}.
\hfill$\Box$
\section{Optimal decay rate with $\frac{1}{2}\leq \beta<1$}
In this section, we present optimal decay rate for the generalized Oldroyd-B model \eqref{eq1} in fractional case $\frac 1 2\leq\beta<1$. We point out that the refinement of Schonbek's strategy also work in this case with a extra lower energy dissipation estimate.
Similarly, we obtain the decay rates in $H^1$. Moreover, considering the decay rate for the highest derivative of the solution to \eqref{eq1}, the main difficulty is unclosed energy estimate. To overcome this difficulty, we introduce another method which flexibly combines the Fourier spiltting method and the time weighted energy estimate. We rewrite \eqref{eq1} as follows.
\begin{align}\label{eq9}
	\left\{\begin{array}{l}
		\partial_tu +\nabla P - div~\tau=F,~~~~div~u=0,\\[1ex]
		\partial_t\tau +(-\Delta)^{\beta}\tau-D(u)=G.\\[1ex]
	\end{array}\right.
\end{align}
For convenience of explanation, we assume that $\beta=\frac 1 2$. By Theorem \ref{th1}, then we have
	\begin{align}\label{estimate1}
		\frac{d}{dt}(\|(u,\tau)\|^2_{H^s}+2k\langle-\nabla u,\tau\rangle_{H^{s-\frac 1 2}})  + \frac k 2\|\nabla u\|^2_{H^{s-\frac 1 2}} + \|\Lambda^{\frac 1 2}\tau\|^2_{H^s} \leq 0.
	\end{align}
To present optimal decay rate for \eqref{eq9}, we need additional lower energy dissipation estimate for $u$. According to properties of Calderon-Zygmund operator and Lemma \ref{Lemma1}, we infer that
\begin{align}\label{4ineq1}
\frac d {dt} \langle \tau, \Lambda^{-1}(-\nabla u)\rangle+ \frac 1 2\|\Lambda^{\frac 1 2} u\|^2_{L^2}
&=\langle G-\Lambda \tau,-\Lambda^{-1}\nabla u\rangle-\langle \mathbb{P}(F+div~\tau),\Lambda^{-1}div~\tau\rangle  \\ \notag
&\leq C(\|\Lambda^{\frac 1 2} \tau\|_{L^2}\|\Lambda^{\frac 1 2} u\|_{L^2}+\|\Lambda^{\frac 1 2} \tau\|^2_{L^2}  \\ \notag
&+\|u\|^2_{L^4}\|\nabla \tau\|_{L^2}+\|\nabla u\|_{L^2}\|u\|_{L^4}\|\tau\|_{L^4})  \\ \notag
&\leq C\delta\|\Lambda^{\frac 1 2} u\|^2_{L^2}+C\|\Lambda^{\frac 1 2} \tau\|^2_{L^2}.
\end{align}
Combining \eqref{estimate1} and \eqref{4ineq1}, we obtain
\begin{align}\label{estimate2}
\frac{d}{dt}[\|(u,\tau)\|^2_{H^s}+2k(\langle-\nabla u,\tau\rangle_{H^{s-\frac 1 2}}+\langle \tau, -\Lambda^{-1}\nabla u\rangle)] + \frac k 2\|\Lambda^{\frac 1 2} u\|^2_{H^{s}} + \|\Lambda^{\frac 1 2}\tau\|^2_{H^s} \leq 0.
\end{align}
Define $S_1(t)=\{\xi:|\xi|\leq C_2(1+t)^{-1}\}$ for $\beta=\frac 1 2$.
By \eqref{estimate2}, we get
\begin{align}\label{4ineq2}
&\frac d {dt} [\|(u,\tau)\|^2_{H^s}+2k(\langle-\nabla u,\tau\rangle_{H^{s-\frac 1 2}}+\langle \tau, -\Lambda^{-1}\nabla u\rangle)]+\frac {kC_2} {2(1+t)}\| u\|^2_{H^s}+\frac {C_2} {1+t}\|\tau\|^2_{H^{s}}  \\ \notag
&\leq \frac {C} {1+t}\int_{S_1(t)}|\hat{u}(\xi)|^2+|\hat{\tau}(\xi)|^2 d\xi.
\end{align}
According to the refinement of the Fourier spiltting method in Theorem \ref{th2}, one can rediscover the following proposition with $\beta=\frac 1 2$ , which contain optimal decay for the solution in $H^1$ \cite{Wu1}.
\begin{prop}\label{prop5}
	Let $d=2$ and $\frac 1 2\leq\beta<1$. Let $(u,\tau)$ be a strong solution of \eqref{eq0} with the initial data $(u_0,\tau_0)$ under the condition in Theorem \ref{th1}. Then we have
	\begin{align}\label{estimate3}
	\frac{d}{dt}[\|(u,\tau)\|^2_{H^s}+2k(\langle-\nabla u,\tau\rangle_{H^{s-\beta}}+\langle \tau, -\Lambda^{-2+2\beta}\nabla u\rangle)] + \frac k 2\|\Lambda^{\beta} u\|^2_{H^{s+1-2\beta}} + \|\Lambda^{\beta}\tau\|^2_{H^s} \leq 0.
	\end{align}
	In addition, if $(u_0,\tau_0)\in \dot{B}^{-1}_{2,\infty}$, then there exists $C>0$ such that for every $t>0$, there holds
	\begin{align}\label{decay}
	\|\Lambda^{s_2}(u,\tau)\|_{H^{s-s_2}} \leq C(1+t)^{-\frac {1}{2\beta}-\frac{s_2}{2\beta}},
	\end{align}
	where $0\leq s_2\leq 1$.
\end{prop}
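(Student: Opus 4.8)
The plan is to prove the two assertions of Proposition \ref{prop5} in turn: first the closed differential inequality \eqref{estimate3}, then the decay bound \eqref{decay} via the Fourier splitting scheme of Section 3.

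For \eqref{estimate3} I would repeat, for general $\beta\in[\frac12,1)$ and at every regularity level, the computation carried out above for $\beta=\frac12$ in \eqref{4ineq1}--\eqref{estimate2}. Starting from the energy identity \eqref{estimate} of Theorem \ref{th1}, the missing ingredient is a dissipation estimate that sees the low frequencies of $u$. Testing the $\tau$-equation in \eqref{eq9} against $-\Lambda^{-2+2\beta}\nabla u$ and the $u$-equation against $-\Lambda^{-2+2\beta}\,\mathrm{div}\,\tau$ and adding, the term coming from $-D(u)$ produces, after using $\mathrm{div}\,u=0$, exactly $\frac12\|\Lambda^{\beta}u\|_{L^2}^2$ on the dissipation side; the remaining linear contributions $\langle\Lambda^{2\beta}\tau,\Lambda^{-2+2\beta}\nabla u\rangle$ and $\langle\mathbb P\,\mathrm{div}\,\tau,\Lambda^{-2+2\beta}\,\mathrm{div}\,\tau\rangle$ are bounded by $C\|\Lambda^{\beta}\tau\|_{L^2}\|\Lambda^{\beta}u\|_{L^2}+C\|\Lambda^{\beta}\tau\|_{L^2}^2$ once one checks that the composite multipliers $\mathbb P$, $\Lambda^{-2+2\beta}\nabla\otimes\nabla$ and $\Lambda^{-\beta}\mathbb P\,\Lambda^{-2+2\beta}\nabla\,\mathrm{div}$ are bounded on $L^2$ (Calderon--Zygmund type operators), while the nonlinear terms $\langle F,\cdot\rangle$, $\langle G,\cdot\rangle$ are controlled by $C\delta(\|\Lambda^\beta u\|_{L^2}^2+\|\Lambda^\beta\tau\|_{L^2}^2)$ using Lemma \ref{Lemma1} and $\|(u,\tau)\|_{H^s}\le\delta$. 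Adding a small multiple $k$ of this estimate to \eqref{estimate} absorbs $C\delta\|\Lambda^\beta u\|_{L^2}^2$ and $Ck\|\Lambda^\beta\tau\|_{L^2}^2$ into the existing dissipation, and interpolating the gained low-frequency norm $\|\Lambda^\beta u\|_{L^2}^2$ against the already present $\|\nabla u\|_{H^{s-\beta}}^2$ upgrades the velocity dissipation to $\|\Lambda^\beta u\|_{H^{s+1-2\beta}}^2$, which is \eqref{estimate3}.

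For \eqref{decay} I would rerun the refinement of Schonbek's strategy of Section 3 with the dissipation of \eqref{estimate3}, the exponent $2$ being everywhere replaced by $2\beta$. Concretely: (i) with the splitting ball $S_\beta(t)=\{\xi:|\xi|^{2\beta}\le C_2 f'(t)/f(t)\}$, $f(t)=\ln^3(e+t)$, the argument of Proposition \ref{prop1}, using $(u_0,\tau_0)\in\dot B^{-1}_{2,\infty}$ on the low-frequency part of the data together with the crude bound $\int_0^t\int_{S_\beta(t)}(|\hat F\cdot\bar{\hat u}|+|\hat G\cdot\bar{\hat\tau}|)\,d\xi\,ds'\lesssim (f'/f)^{1/2}(1+t)^{1/2}$, yields a logarithmic decay that bootstraps to $\ln^{-l}(e+t)$ for every $l$; (ii) switching to $S_\beta(t)=\{\xi:|\xi|^{2\beta}\le C_2(1+t)^{-1}\}$ and a time weighted energy estimate as in Proposition \ref{prop3} promotes this to an algebraic rate; (iii) the analogues of Lemmas \ref{Lemma3} and \ref{Lemma4} then give $(u,\tau)\in L^\infty(0,\infty;\dot B^{-\sigma}_{2,\infty})$ and improve the $L^2$ decay, the iteration reaching $\sigma=1$ and hence $\|(u,\tau)\|_{L^2}\lesssim(1+t)^{-1/(2\beta)}$; (iv) finally the $\Lambda^{s_2}$-level energy plus inner-product functional (as in Proposition \ref{prop2}, which still closes because the commutator estimate of Lemma \ref{Lemma2} is available for $s_2\le1$) combined with the Fourier splitting gives $\|\Lambda^{s_2}(u,\tau)\|_{H^{s-s_2}}\lesssim(1+t)^{-(1+s_2)/(2\beta)}$ for $0\le s_2\le1$.

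The main obstacle is the first part: one must run the $\Lambda^{-2+2\beta}$ computation cleanly for the whole range $\beta\in[\frac12,1)$, keeping track of which composite Fourier multipliers are genuinely $L^2$-bounded --- this is exactly where the restriction $\beta\ge\frac12$ is used, so that $\Lambda^{-2+2\beta}$ is not too singular --- and verifying that the dissipation produced is precisely $\|\Lambda^\beta u\|^2_{H^{s+1-2\beta}}$ after interpolation with \eqref{estimate}. In the decay part the only delicate point is that the dissipation now controls only $|\xi|^{2\beta}$, so the exponents in the Fourier splitting balls and in the convergence of $\int_0^t\|(F,G)\|_{\dot B^{-\sigma}_{2,\infty}}\,ds'$ must be recomputed with $2\beta$ in place of $2$; the structure of the iteration itself is unchanged.
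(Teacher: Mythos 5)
Your proposal follows essentially the same route as the paper: the paper obtains \eqref{estimate3} precisely by adding the cross term $\langle \tau, -\Lambda^{-2+2\beta}\nabla u\rangle$ to the energy of Theorem \ref{th1} (carried out explicitly for $\beta=\tfrac12$ in \eqref{4ineq1}--\eqref{estimate2}, with Calderon--Zygmund boundedness and smallness absorption), and then omits the decay part, stating it is proved by rerunning the Fourier splitting and negative-Besov bootstrap of Theorem \ref{th2} with $|\xi|^2$ replaced by $|\xi|^{2\beta}$, which is exactly your plan. The only minor imprecision is that for $\beta>\tfrac12$ the linear cross terms carry $3\beta-1>\beta$ derivatives, so they are controlled by $\|\Lambda^{\beta}\tau\|_{H^{s}}\|\Lambda^{\beta}u\|_{H^{s+1-2\beta}}$ rather than the pure $\dot H^{\beta}\times\dot H^{\beta}$ products you wrote, but they are still absorbed for small $k$ exactly as you indicate.
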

We omit the proof of Proposition \ref{prop5}, which is similar to the proof of \eqref{4ineq2} and Theorem \ref{th2}.  \\
{\bf The proof of Theorem \ref{th3}:}
Applying $\Lambda^{s_1}$ to \eqref{eq9}, we obtain
\begin{align}\label{eq10}
\left\{\begin{array}{l}
\partial_t\Lambda^{s_1}u+\nabla\Lambda^{s_1} P-div~\Lambda^{s_1}\tau=\Lambda^{s_1}F,\\[1ex]
\partial_t\Lambda^{s_1}\tau-D(\Lambda^{s_1}u)+(-\Delta)^{\beta}\Lambda^{s_1}\tau=\Lambda^{s_1}G.\\[1ex]
\end{array}\right.
\end{align}
	We first introduce the energy and dissipation functionals for $(u,\tau)$ as follows:
	$$\overline{E}_{\beta}=(1+t)^a\|\Lambda^s(u,\tau)\|^2_{L^2}+k\langle \Lambda^{s-\beta}\tau, -\nabla\Lambda^{s-\beta} u\rangle,$$
	and
	$$\overline{D}_{\beta}=(1+t)^a\|\Lambda^{s+\beta}\tau\|^2_{L^2}+ \frac k 4\|\nabla\Lambda^{s-\beta} u\|^2_{L^2},$$
	where $a=2-\frac 1 {\beta}\in[0,1)$ and $k$ is small enough to overcome the difficulty about inner product estimate of linear term $div\tau$ in $(\ref{eq10})_1$ with $\beta=\frac 1 2$. By \eqref{eq10}, we have
	\begin{align}\label{4ineq3}
	\frac 1 2\frac d {dt} \|\Lambda^s(u,\tau)\|^2_{L^2}+ \|\Lambda^{s+\beta}\tau\|^2_{L^2}=\langle \Lambda^s F,\Lambda^s u\rangle+\langle \Lambda^s G,\Lambda^s \tau\rangle,
	\end{align}
	and
	\begin{align}\label{4ineq4}
	&\frac d {dt} \langle \Lambda^{s-\beta}\tau, -\nabla\Lambda^{s-\beta} u\rangle+ \frac 1 2\|\nabla\Lambda^{s-\beta} u\|^2_{L^2}  \\ \notag
	&=\langle (-\Lambda^{s-\beta}G-\Lambda^{s+\beta} \tau),\nabla\Lambda^{s-\beta} u\rangle+\langle \Lambda^{s-\beta}\mathbb{P}(F+div~\tau),div~\Lambda^{s-\beta} \tau\rangle.
	\end{align}	
	Combining \eqref{4ineq3} and \eqref{4ineq4}, we get
	\begin{align}\label{4ineq5}
    \frac d {dt} \overline{E}_{\beta}+ 2\overline{D}_{\beta}
	&=a(1+t)^{a-1}\|\Lambda^s(u,\tau)\|^2_{L^2}+2(1+t)^a(\langle \Lambda^s F,\Lambda^s u\rangle+\langle \Lambda^s G,\Lambda^s \tau\rangle) \\ \notag
	&+k\langle (-\Lambda^{s-\beta}G-\Lambda^{s+\beta} \tau),\nabla\Lambda^{s-\beta} u\rangle+k\langle \Lambda^{s-\beta}\mathbb{P}(F+div~\tau),div~\Lambda^{s-\beta} \tau\rangle.
    \end{align}	
	By Lemma \ref{Lemma1}, we obtain $\|\Lambda^{s} u\|_{L^2}\leq C\|u\|^{1-\theta}_{L^2}\|\Lambda^{s+1-\beta} u\|^{\theta}_{L^2}$ and $\|\nabla u\|_{L^\infty}\leq C\| u\|^{1-\theta_1}_{L^2}\|\Lambda^{s+1-\beta} u\|^{\theta_1}_{L^2}$ with $(\theta,\theta_1)=(\frac {s} {s+1-\beta},\frac {2} {s+1-\beta})$. By Lemmas \ref{Lemma1}-\ref{Lemma2} and Proposition \ref{prop5}, we have
	\begin{align}\label{4ineq6}
	(1+t)^a\langle \Lambda^s F,\Lambda^s u\rangle &\leq C(1+t)^a\|\nabla u\|_{L^\infty}\|\Lambda^s u\|^2_{L^2} \\ \notag
	&\leq C(1+t)^a\|\nabla u\|^{\beta}_{L^\infty}\|\nabla u\|^{1-\beta}_{L^2}\|\Lambda^{s+1-\beta} u\|^2_{L^2} \\ \notag
	&\leq C(1+t)^a(1+t)^{-\frac {1}{2}-\frac {1}{2\beta}}\|\Lambda^{s+1-\beta} u\|^2_{L^2} \\ \notag
	&\leq C(1+t)^{\frac 3 2-\frac {3}{2\beta}}\|\Lambda^{s+1-\beta} u\|^2_{L^2}.
	\end{align}	
	When $\frac 1 2\leq\beta<1$, we have $(1+t)^a\langle \Lambda^s F,\Lambda^s u\rangle \leq \delta \overline{D}_{\beta}$.
	Similarly, we deduce that
	\begin{align}\label{4ineq7}
	(1+t)^a\langle \Lambda^s G,\Lambda^s \tau\rangle &\leq C(1+t)^a\|\Lambda^{s-\beta} G\|_{L^2}\|\Lambda^{s+\beta} \tau\|_{L^2} \\ \notag
	&\leq C(1+t)^a\|\Lambda^{s+\beta} \tau\|_{L^2}(\| u\|_{L^\infty}\|\Lambda^{s+1-\beta} \tau\|_{L^2}+\|\tau\|_{L^\infty}\|\nabla\Lambda^{s-\beta} u\|_{L^2} ) \\ \notag
	&\leq \delta \overline{D}_{\beta}+C(1+t)^a(\| u\|^2_{L^\infty}\|\Lambda^{s+1-\beta} \tau\|^2_{L^2}+\|\tau\|^2_{L^\infty}\|\nabla\Lambda^{s-\beta} u\|^2_{L^2}) \\ \notag
	&\leq 2\delta \overline{D}_{\beta}+(1+t)^{a-\frac 1 {\beta}}\|\Lambda^{s+1-\beta} \tau\|^2_{L^2}.
	\end{align}
	Applying Lemma \ref{Lemma1} again, we get $\|\Lambda^{s+1-\beta}  \tau\|_{L^2}\leq C\| \tau\|^{1-\theta_2}_{L^2}\|\Lambda^{s+\beta} \tau\|^{\theta_2}_{L^2}$ with $\theta_2=\frac {s+1-\beta} {s+\beta}$, which implies that
	\begin{align}\label{4ineq8}
	\|\Lambda^{s+1-\beta} \tau\|^2_{L^2}&\leq C(1+t)^{-\frac {s+1} {\beta}+1}\|\tau\|^2_{L^2}+C(1+t)^a\|\Lambda^{s+\beta} \tau\|^{2}_{L^2}	\\ \notag
	&\leq C(1+t)^{-\frac {s+1} {\beta}+a-1}+C(1+t)^a\|\Lambda^{s+\beta} \tau\|^{2}_{L^2}.
	\end{align}
	Combining \eqref{4ineq7} and \eqref{4ineq8}, we infer that
	\begin{align}\label{4ineq9}
	(1+t)^a\langle \Lambda^s G,\Lambda^s \tau\rangle \leq C\delta \overline{D}_{\beta}+C(1+t)^{-\frac {s+1} {\beta}+a-1}.
	\end{align}
    Moreover, we can easily deduce from \eqref{4ineq7} and \eqref{4ineq8} that
    \begin{align}\label{4ineq10}
    &k\langle (-\Lambda^{s-\beta}G-\Lambda^{s+\beta} \tau),\nabla\Lambda^{s-\beta} u\rangle+k\langle \Lambda^{s-\beta}\mathbb{P}(F+div \tau),div \Lambda^{s-\beta} \tau\rangle  \\ \notag
    &\leq C(\delta+k)\overline{D}_{\beta}+C(1+t)^{-\frac {s+1} {\beta}+a-1}.
    \end{align}
    According \eqref{4ineq5}-\eqref{4ineq10}, we infer that
	\begin{align}\label{4ineq11}
	\frac d {dt} \overline{E}_{\beta}+ 2\overline{D}_{\beta}
	&\leq a(1+t)^{a-1}\|\Lambda^s(u,\tau)\|^2_{L^2}+C(\delta+k) \overline{D}_{\beta}++C(1+t)^{-\frac {s+1} {\beta}+a-1}.
	\end{align}
    Define $S^{\beta}(t)=\{\xi:|\xi|^{2\beta}\leq 4C_2(1+t)^{-1}\}$.
    By \eqref{4ineq11}, we get
	\begin{align}\label{4ineq12}
	&\frac d {dt} \overline{E}_{\beta}+ kC_2(1+t)^{a-1}\|\Lambda^s(u,\tau)\|^2_{L^2}  \\ \notag
	&\leq C(1+t)^{-\frac {s+1} {\beta}+a-1}+C(1+t)^{a-1}
	\int_{S^{\beta}(t)}|\xi|^{2s}(|\hat{u}(\xi)|^2+|\hat{\tau}(\xi)|^2) d\xi \\ \notag
	&\leq C(1+t)^{-\frac {s+1} {\beta}+a-1},
	\end{align}	
	which implies that
	\begin{align}\label{4ineq13}
	&(1+t)^{\frac {s+1} {\beta}-a+1} \overline{E}_{\beta}+\int_{0}^{t}kC_2(1+s')^{\frac {s+1} {\beta}} \|\Lambda^s(\tau, u)\|^2_{L^2}ds'  \\  \notag
	&\leq C(1+t)+C_s\int_{0}^{t}(1+s')^{\frac {s+1} {\beta}-a}\langle \Lambda^{s-\beta}\tau, -\nabla\Lambda^{s-\beta} u\rangle ds' \\ \notag
	&\leq C(1+t)+C_s\int_{0}^{t}(1+s')^{\frac {s+1} {\beta}} \|\Lambda^{s} u\|^2_{L^2}ds'+C_s\int_{0}^{t}(1+s')^{\frac {s+1} {\beta}-2a} \|\Lambda^{s+1-2\beta} \tau\|^2_{L^2}ds'  \\ \notag
	&\leq C(1+t)+C_s\int_{0}^{t}(1+s')^{\frac {s+1} {\beta}} \|\Lambda^s(\tau, u)\|^2_{L^2}ds'+C_s\int_{0}^{t}(1+s')^{\frac {1-s} {\beta}} \|\tau\|^2_{L^2}ds'  \\ \notag
	&\leq C(1+t)+C_s\int_{0}^{t}(1+s')^{\frac {s+1} {\beta}} \|\Lambda^s(\tau, u)\|^2_{L^2}ds'.
	\end{align}	
	Taking $kC_2\geq C_s$ and using Lemma \ref{Lemma1}, we infer that
	\begin{align}\label{4ineq14}
	(1+t)^{\frac {s+1} {\beta}+1}\|\Lambda^s(u,\tau)\|^2_{L^2}&\leq C(1+t)+Ck(1+t)^{\frac {s+1} {\beta}-a+1}\langle \Lambda^{s-\beta}\tau, -\nabla\Lambda^{s-\beta} u\rangle \\  \notag
	&\leq C[(1+t)+k(1+t)^{\frac {s+1+\beta} {\beta}}\|\Lambda^s u\|^2_{L^2}+k(1+t)^{\frac {s+3-3\beta} {\beta}}\|\Lambda^{s+1-2\beta}\tau\|^2_{L^2}] \\  \notag
	&\leq C(1+t)+Ck(1+t)^{\frac {s+1} {\beta}+1}\|\Lambda^s(u,\tau)\|^2_{L^2}+Ck(1+t)^{\frac {1-s}{\beta}+1}\|\tau\|^2_{L^2},
	\end{align}
	which implies that
	\begin{align}\label{4ineq15}
	\|\Lambda^s(u,\tau)\|^2_{L^2}\leq C(1+t)^{-\frac {s+1} {\beta}}.
	\end{align}	
	The lower bound of time decay rate can be obtained by the similar method in Theorem \ref{th2}. We omit the details here.
\hfill$\Box$
	
\smallskip
\noindent\textbf{Acknowledgments} This work was
partially supported by the National Natural Science Foundation of China (No.12171493 and No.11671407), the Macao Science and Technology Development Fund (No. 0091/2018/A3), Guangdong Province of China Special Support Program (No. 8-2015),
the key project of the Natural Science Foundation of Guangdong province (No. 2016A030311004), and National Key R$\&$D Program of China (No. 2021YFA1002100).


\phantomsection
\addcontentsline{toc}{section}{\refname}
\bibliographystyle{abbrv} 
\bibliography{OldroydBref}

\begin{thebibliography}{10}

\bibitem{Bahouri2011}
H.~Bahouri, J.-Y. Chemin, and R.~Danchin.
\newblock {\em Fourier analysis and nonlinear partial differential equations},
  volume 343 of {\em Grundlehren der Mathematischen Wissenschaften [Fundamental
  Principles of Mathematical Sciences]}.
\newblock Springer, Heidelberg, 2011.

\bibitem{Cai2019}
Y.~Cai, Z.~Lei, F.~Lin, and N.~Masmoudi.
\newblock Vanishing {V}iscosity {L}imit for {I}ncompressible {V}iscoelasticity
  in {T}wo {D}imensions.
\newblock {\em Comm. Pure Appl. Math.}, 72(10):2063--2120, 2019.

\bibitem{Chemin2001}
J.-Y. Chemin and N.~Masmoudi.
\newblock About lifespan of regular solutions of equations related to
  viscoelastic fluids.
\newblock {\em SIAM J. Math. Anal.}, 33(1):84--112, 2001.

\bibitem{Li2}
Y.~Chen, M.~Li, Q.~Yao, and Z.~Yao.
\newblock The sharp time decay rates and stability of large solutions to the
  two-dimensional phan-thien-tanner system with magnetic field.
\newblock {\em Asymptotic Anal.}, 129:451--484, 2022.

\bibitem{Li1}
Y.~Chen, M.~Li, Q.~Yao, and Z.~Yao.
\newblock Sharp rates of decay and global-in-time stabil- ity of large
  solutions to three-dimensional incompressible phan-thien-tanner system of
  polymeric flows.
\newblock {\em SIAM J. Math. Anal.}, 2023.

\bibitem{P.Constantin}
P.~Constantin, J.~Wu, J.~Zhao, and Y.~Zhu.
\newblock High reynolds number and high weissenberg number {O}ldroyd-{B} model
  with dissipation.
\newblock {\em J. Evol. Equ.}, 21(3):2787–--2806, 2020.

\bibitem{DLY}
W.~Deng, Z.~Luo, and Z.~Yin.
\newblock Global solutions and large time behavior for some {O}ldroyd-{B} type
  models in {${R}^2$}.
\newblock {\em arXiv}, 2021.

\bibitem{2015Elgindi1}
T.~M. Elgindi and J.~Liu.
\newblock Global wellposedness to the generalized {O}ldroyd type models in
  {${R}^3$}.
\newblock {\em J. Differential Equations}, 259(5):1958--1966, 2015.

\bibitem{2015Elgindi}
T.~M. Elgindi and F.~Rousset.
\newblock Global regularity for some {O}ldroyd-{B} type models.
\newblock {\em Comm. Pure Appl. Math.}, 68(11):2005--2021, 2015.

\bibitem{Fernandez-Cara}
E.~Fern\'{a}ndez-Cara, F.~Guill\'{e}n, and R.~R. Ortega.
\newblock Some theoretical results concerning non-{N}ewtonian fluids of the
  {O}ldroyd kind.
\newblock {\em Ann. Scuola Norm. Sup. Pisa Cl. Sci. (4)}, 26(1):1--29, 1998.

\bibitem{Guillope1990}
C.~Guillop\'{e} and J.-C. Saut.
\newblock Global existence and one-dimensional nonlinear stability of shearing
  motions of viscoelastic fluids of {O}ldroyd type.
\newblock {\em RAIRO Mod\'{e}l. Math. Anal. Num\'{e}r.}, 24(3):369--401, 1990.

\bibitem{He2009}
L.~He and P.~Zhang.
\newblock {$L^2$} decay of solutions to a micro-macro model for polymeric
  fluids near equilibrium.
\newblock {\em SIAM J. Math. Anal.}, 40(5):1905--1922, 2008/09.

\bibitem{2019OldroydB}
M.~Hieber, H.~Wen, and R.~Zi.
\newblock Optimal decay rates for solutions to the incompressible {O}ldroyd-{B}
  model in ${R}^3$.
\newblock {\em Nonlinearity}, 32(3):833--852, 2019.

\bibitem{Lei2008}
Z.~Lei, C.~Liu, and Y.~Zhou.
\newblock Global solutions for incompressible viscoelastic fluids.
\newblock {\em Arch. Ration. Mech. Anal.}, 188(3):371--398, 2008.

\bibitem{Lei-Zhou2005}
Z.~Lei and Y.~Zhou.
\newblock Global existence of classical solutions for the two-dimensional
  {O}ldroyd model via the incompressible limit.
\newblock {\em SIAM J. Math. Anal.}, 37(3):797--814, 2005.

\bibitem{2010On}
F.~Lin and P.~Zhang.
\newblock On the initial-boundary value problem of the incompressible
  viscoelastic fluid system.
\newblock {\em Comm. Pure Appl. Math.}, 61(4):539--558, 2008.

\bibitem{Lions-Masmoudi}
P.~L. Lions and N.~Masmoudi.
\newblock Global solutions for some {O}ldroyd models of non-{N}ewtonian flows.
\newblock {\em Chinese Ann. Math. Ser. B}, 21(2):131--146, 2000.

\bibitem{Liushuai}
S.~Liu and Y.~Wang.
\newblock Time-decay rate of global solutions to the generalized incompressible
  {O}ldroyd-{B} model with fractional dissipation.
\newblock {\em Appl. Math. Lett.}, 131:108032, 2022.

\bibitem{Luo-Yin2}
W.~Luo and Z.~Yin.
\newblock The {$L^2$} decay for the 2{D} co-rotation {FENE} dumbbell model of
  polymeric flows.
\newblock {\em Adv. Math.}, 343:522--537, 2019.

\bibitem{2011Global}
N.~Masmoudi.
\newblock Global existence of weak solutions to macroscopic models of polymeric
  flows.
\newblock {\em J. Math. Pures Appl. (9)}, 96(5):502--520, 2011.

\bibitem{Masmoudi2013}
N.~Masmoudi.
\newblock Global existence of weak solutions to the {FENE} dumbbell model of
  polymeric flows.
\newblock {\em Invent. Math.}, 191(2):427--500, 2013.

\bibitem{Moser1966A}
J.~Moser.
\newblock A rapidly convergent iteration method and non-linear partial
  differential equations. {I}.
\newblock {\em Ann. Scuola Norm. Sup. Pisa Cl. Sci. (3)}, 20:265--315, 1966.

\bibitem{1959On}
L.~Nirenberg.
\newblock On elliptic partial differential equations.
\newblock {\em Ann. Scuola Norm. Sup. Pisa Cl. Sci. (3)}, 13:115--162, 1959.

\bibitem{1958Non}
J.~G. Oldroyd.
\newblock Non-{N}ewtonian effects in steady motion of some idealized
  elastico-viscous liquids.
\newblock {\em Proc. Roy. Soc. London Ser. A}, 245:278--297, 1958.

\bibitem{Schonbek1985}
M.~E. Schonbek.
\newblock {$L^2$} decay for weak solutions of the {N}avier-{S}tokes equations.
\newblock {\em Arch. Rational Mech. Anal.}, 88(3):209--222, 1985.

\bibitem{Schonbek}
M.~E. Schonbek.
\newblock Existence and decay of polymeric flows.
\newblock {\em SIAM J. Math. Anal.}, 41(2):564--587, 2009.

\bibitem{Wu1}
P.~Wang, J.~Wu, X.~Xu, and Y.~Zhong.
\newblock Sharp decay estimates for {O}ldroyd-{B} model with only fractional
  stress tensor diffusion.
\newblock {\em J. Funct. Anal.}, 282:109332, 2022.

\bibitem{Wu}
J.~Wu and J.~Zhao.
\newblock Global regularity for the generalized incompressible {O}ldroyd-{B}
  model with only stress tentor dissipation in critical {B}esov spaces.
\newblock {\em J. Differential Equations}, 316:641–--686, 2022.

\bibitem{Zhang-Fang2012}
T.~Zhang and D.~Fang.
\newblock Global existence of strong solution for equations related to the
  incompressible viscoelastic fluids in the critical {$L^p$} framework.
\newblock {\em SIAM J. Math. Anal.}, 44(4):2266--2288, 2012.

\end{thebibliography}

\end{document}